\journalname{Numer. Math.}
\newtheorem{prop}{Proposition}[section]
\newcommand{\be}{\begin{equation}}
\newcommand{\ee}{\end{equation}}
\newcommand{\ba}{\begin{array}}
\newcommand{\ea}{\end{array}}
\newcommand{\bea}{\begin{eqnarray}}
\newcommand{\eea}{\end{eqnarray}}
\newcommand{\beas}{\begin{eqnarray*}}
\newcommand{\eeas}{\end{eqnarray*}}
\numberwithin{equation}{section}
\begin{document}


\title{A structure-preserving parametric finite element method for geometric flows with anisotropic surface energy}

\titlerunning{SP-PFEM for geometric flows with anisotropic surface energy}        

\author{Weizhu Bao \and
        Yifei Li
}

\authorrunning{W.~Bao, and Y.~Li} 

\institute{W.~Bao  \at
              Department of Mathematics, National
              University of Singapore, Singapore 119076\\
              Fax: +65-6779-5452, Tel.: +65-6516-2765,\\
              URL: https://blog.nus.edu.sg/matbwz/\\
              \email{matbaowz@nus.edu.sg}\\[1em]
           Y.~Li \at
              Department of Mathematics, National
              University of Singapore, Singapore 119076\\
              \email{e0444158@u.nus.edu}
              }

\date{Received: date / Accepted: date}

\maketitle


\begin{abstract}
We propose and analyze structure-preserving parametric finite element methods (SP-PFEM) for evolution of a closed curve under different geometric flows
with arbitrary anisotropic surface energy $\gamma(\boldsymbol{n})$ for $\boldsymbol{n}\in \mathbb{S}^1$ representing the outward unit normal vector. By introducing a novel surface energy matrix $\boldsymbol{G}_k(\boldsymbol{n})$ depending
on $\gamma(\boldsymbol{n})$ and the Cahn-Hoffman $\boldsymbol{\xi}$-vector  as well as a nonnegative stabilizing function $k(\boldsymbol{n}):\ \mathbb{S}^1\to \mathbb{R}$, which is a sum of a symmetric positive definite matrix and an anti-symmetric matrix,  we obtain a new geometric partial differential equation and its corresponding variational formulation for the evolution of a closed curve under anisotropic surface diffusion. Based on the new weak formulation, we
propose a parametric finite element method for the anisotropic surface diffusion
and show that it is area conservation and energy dissipation under a very mild condition on $\gamma(\boldsymbol{n})$. The SP-PFEM is then extended
to simulate evolution of a close curve under other anisotropic geometric flows including anisotropic curvature flow and  area-conserved anisotropic curvature flow. Extensive numerical results are reported to demonstrate
the efficiency and unconditional energy stability as well as good mesh quality property of the proposed SP-PFEM for simulating anisotropic geometric flows.

\keywords{geometric flows  \and parametric finite element method
\and anisotropy surface energy \and structure-preserving \and area conservation  \and energy-stable}

  \subclass{65M60, 65M12, 35K55, 53C44}
\end{abstract}


\section{Introduction}
Anisotropic surface energy along surface/interface is ubiquitous in solids and materials science due to the lattice orientational difference \cite{gurtin2002interface,Cahn94}.
It thus generates anisotropic evolution process of interface/surface (or geometric flows with anisotropic surface energy) in material sciences
\cite{Sutton95,Thompson12,jiang2012}, imaging sciences \cite{niessen1997general,clarenz2000anisotropic,dolcetta2002area}, and computational geometry \cite{wang2015sharp,Cahn94,deckelnick2005computation}. In fact, anisotropic geometric flows have significant and broader applications in materials science, solid-state
physics and computational geometry, such as grain boundary growth \cite{barrett2010finite}, foam bubble/film \cite{shen2004direct},  surface phase formation \cite{Ye10a},
epitaxial growth \cite{Fonseca14,gurtin2002interface}, heterogeneous catalysis \cite{randolph2007controlling},
solid-state dewetting \cite{Thompson12,bao2017parametric,https://doi.org/10.48550/arxiv.2012.11404}, computational graphics \cite{niessen1997general,clarenz2000anisotropic,dolcetta2002area}.

Assume $\Gamma$ be a closed curve in two dimensions (2D) associated with
a given anisotropic surface energy $\gamma(\boldsymbol{n})$,
where $\boldsymbol{n}=(n_1,n_2)^T\in \mathbb{S}^1$ representing the
unit outward normal vector, and define the free energy functional of the
closed curve $\Gamma$ associate with anisotropic surface energy $\gamma(\boldsymbol{n})$ as
\begin{equation} \label{eq: geometric quantities}
    W(\Gamma):=\int_{\Gamma}\gamma(\boldsymbol{n})ds,
\end{equation}
where $s$ denotes the arc-length parameter of $\Gamma$. By applying
the thermodynamic variation, one can obtain the chemical potential $\mu:=\mu(s)$ (or weighted curvature denoted as $\kappa_\gamma:=\kappa_\gamma(s)$) generated from the energy functional $W(\Gamma)$ as \cite{jiang2019sharp,taylor1992ii}
\begin{equation}\label{chemical-pot}
\mu=\kappa_\gamma:=\frac{\delta W(\Gamma)}{\delta \Gamma}=
\lim_{\varepsilon\to0}\frac{W(\Gamma^\varepsilon)-W(\Gamma)}{\varepsilon},
\end{equation}
where $\Gamma^\varepsilon$ is a small perturbation of $\Gamma$.
Different geometric flows associate with the anisotropic surface energy $\gamma(\boldsymbol{n})$ can be easily defined by
providing the normal velocity $V_n$ for evolution of $\Gamma$.
Typical geometric flows widely used in different applications including
anisotropic curvature flow, area-conserved anisotropic curvature flow
and anisotropic surface diffusion with the corresponding normal velocity
$V_n$  given as \cite{cahn1991stability,taylor1992overview,chen1999uniqueness,andrews2001volume,Barrett2020}
\begin{equation}\label{eqn: aniso geo flow}
V_n=\left\{\begin{array}{ll}
   -\mu,  &\qquad \hbox{anisotropic curvature flow},\\
   -\mu+\lambda, &\qquad \hbox{area-conserved anisotropic curvature flow},\\
   \partial_{ss}\mu, &\qquad \hbox{anisotropic surface diffusion},\\
\end{array}\right.
\end{equation}
where 
$\lambda$ is chosen such that the area of the region enclosed by $\Gamma$ is conserved, which is given as \begin{equation}\label{lambdcon}
\lambda=\frac{\int_{\Gamma} \mu\,ds}{\int_{\Gamma}1\,ds},
\quad \Leftrightarrow \quad \int_{\Gamma}V_n\, ds=\int_{\Gamma}(-\mu+\lambda) ds=0.
\end{equation}
We remark here that if $\gamma(\boldsymbol{n})\equiv 1$ for $\boldsymbol{n}\in \mathbb{S}^1$, i.e. isotropic surface energy,
then the chemical potential defined in \eqref{chemical-pot}
collapses to the curvature $\kappa$ of $\Gamma$, and then
the geometric flows defined in \eqref{eqn: aniso geo flow}
collapse to curvature flow (or curve-shortening flow), area-conserved curvature flow and surface diffusion, respectively, we refer to the review paper \cite{Barrett2020}.


Based on different parametrizations of $\Gamma$ and mathematical formulations for the geometric flows \eqref{eqn: aniso geo flow},
several numerical methods
have been proposed for simulating the geometric flows
\eqref{eqn: aniso geo flow} with isotropic and anisotropic surface energy
corresponding to $\gamma(\boldsymbol{n})\equiv 1$ and $\gamma(\boldsymbol{n})\neq {\rm const}$, respectively. These numerical methods include  the level set method and the phase-field method \cite{du2020phase}, the marker particle method \cite{wong2000periodic,du2010tangent}, the finite element method based on graph formulation \cite{deckelnick2005computation,deckelnick2005fully}, the discontinuous Galerkin method \cite{xu2009local}, the crystalline method \cite{carter1995shape,girao1996crystalline}, the evolving surface finite element method (ESFEM) \cite{kovacs2019convergent}, and the parametric finite element method (PFEM) \cite{barrett2007parametric,barrett2008parametric}.
Due to that only the normal velocity is given in the anisotropic geometric flows \eqref{eqn: aniso geo flow}, different artificial tangential velocities
are introduced  in different numerical methods, which cause different accuracies and mesh qualities, and thus some methods need to carry out re-mesh frequently during evolution to avoid the collision of mesh points.
Among those numerical methods, the PFEM proposed by Barrett, Garcke, and N\"urnberg (also called as BGN scheme in the literature) demonstrates
several good properties including efficiency and accuracy, unconditional
energy stability and asymptotic equal-mesh distribution  for isotropic geometric flows. Recently, by introducing a clever approximation of the normal vector, Bao and Zhao proposed a structure-preserving PFEM (SP-PFEM) \cite{bao2021structurepreserving,bao2022volume} for surface diffusion.
Different techniques have been proposed to extend PFEM or SP-PFEM from
isotropic geometric flows to anisotropic geometric flows in the literatures
\cite{barrett2008numerical,barrett2008variational,wang2015sharp,jiang2016solid,jiang2019sharp,li2020energy}.
Very recently, by introducing a symmetric positive definite surface matrix $\boldsymbol{Z}_k(\boldsymbol{n})$ depending
on $\gamma(\boldsymbol{n})$ and the Cahn-Hoffman $\boldsymbol{\xi}$-vector  as well as a stabilizing function $k(\boldsymbol{n}):\ \mathbb{S}^1\to \mathbb{R}^+$ \cite{bao2021symmetrized,bao2022symmetrized},
we successfully and systematically extended the SP-PFEM method from isotropic surface diffusion to anisotropic surface diffusion under a symmetric (and necessary) condition on  $\gamma(\boldsymbol{n})$ as:
$\gamma(\boldsymbol{n})=\gamma(-\boldsymbol{n})$ for $\boldsymbol{n}\in \mathbb{S}^1$.
Unfortunately, there are different anisotropic surface energy $\gamma(\boldsymbol{n})$ which is not symmetric, e.g.
the $3$-fold anisotropic surface energy \cite{wang2015sharp,bao2017parametric},
in different applications. Thus the proposed
SP-PFEM in \cite{bao2021symmetrized,bao2022symmetrized} cannot be
applied to handle anisotropic geometric flows with non-symmetric surface energy
$\gamma(\boldsymbol{n})$.

In fact, it is still open to design a structure-preserving parametric finite element method (SP-PFEM) for anisotropic geometric flows
\eqref{eqn: aniso geo flow} with general anisotropy $\gamma(\boldsymbol{n})$, especially when $\gamma(\boldsymbol{n})\ne \gamma(-\boldsymbol{n})$. The main aim
of this paper is to attack this important and challenging problem.
The main ingredients in the proposed methods are based on:
(i) the introduction of a novel surface energy matrix $\boldsymbol{G}_k(\boldsymbol{n})$ depending
on $\gamma(\boldsymbol{n})$ and the Cahn-Hoffman $\boldsymbol{\xi}$-vector  as well as a stabilizing function $k(\boldsymbol{n}):\ \mathbb{S}^1\to \mathbb{R}^+$, which can be explicitly decoupled into a symmetric positive definite matrix and an anti-symmetric matrix, (ii) a new conservative geometric partial differential equation (PDE), (iii) a new variational formulation, and (iv) a proper approximation of the normal vector. We prove that the proposed
SP-PFEM is structure-preserving, i.e. area conservation and energy dissipation, for anisotropic surface diffusion under a very mild
condition
\begin{equation}
3\gamma(\boldsymbol{n})>\gamma(-\boldsymbol{n}),\qquad
\boldsymbol{n}\in \mathbb{S}^1.
\end{equation}
Finally we extend the proposed SP-PFEM
to simulate evolution of a closed curve under other anisotropic geometric flows including anisotropic curvature flow and  area-conserved anisotropic curvature flow.

The remainder of this paper is organized as follows: In section 2, by introducing the surface energy matrix $\boldsymbol{G}_k(\boldsymbol{n})$, we derive a new conservative PDE and its corresponding variational formulation for anisotropic surface diffusion, and propose the semi-discretization in space and the full-discretization by SP-PFEM. In section 3, we establish the unconditional energy dissipation of the proposed SP-PFEM for anisotropic surface diffusion. In section 4,
we extend the proposed SP-PFEM  to the anisotropic curvature flow and area-conserved
anisotropic curvature flow. Extensive numerical results are reported in section 5 to illustrate the efficiency and accuracy as well as structure-preserving properties of the proposed SP-PFEM.
Finally, some concluding remarks are drawn in section 6.

\section{The structure-preserving PFEM for anisotropic surface diffusion} In this section, by taking the anisotropic surface diffusion in
\eqref{eqn: aniso geo flow}, we introduce a surface energy matrix $\boldsymbol{G}_k(\boldsymbol{n})$, obtain a conservative geometric PDE and
derive its corresponding variational formulation. An SP-PFEM for the variational problem is presented and its structral-preserving property is
stated under a very mild condition on the anistropic surface energy
$\gamma(\boldsymbol{n})$.

\subsection{The geometric PDE}
Let $\Gamma:=\Gamma(t)$ be parameterized by $\boldsymbol{X}:=\boldsymbol{X}(s, t)=(x(s,t),y(s,t))^T\in{\mathbb R}^2$
with $s$ denoting the time-dependent arc-length parametrization (or `Lagrangian coordinate') of $\Gamma$ and $t$ representing the time. Then the anisotropic surface diffusion
of $\Gamma$ can be described by
\begin{equation}
\partial_t \boldsymbol{X}= \partial_{ss} \mu \, \boldsymbol{n},
\end{equation}
where $\boldsymbol{n}=(n_1,n_2)^T\in {\mathbb S}^1$ represents the unit outward normal vector of $\Gamma$. In practice, another popular way
(or `Eulerian coordinate') is to adopt $\rho\in  \mathbb{T}=\mathbb{R}/\mathbb{Z}=[0, 1]$ being the periodic interval
and then parameterize $\Gamma(t)$ on $\mathbb{T}$ by
$\boldsymbol{X}(\rho, t)$,  which is given as
\begin{equation}
    \Gamma(t):=\boldsymbol{X}(\cdot, t)\quad \boldsymbol{X}(\cdot, t): \mathbb{T}\to \mathbb{R}^2, (\rho, t)\mapsto(x(\rho,t), y(\rho,t))^T.
\end{equation}
Then the arc-length parameter $s$ is given as $s(\rho, t)=\int_0^\rho |\partial_\rho \boldsymbol{X}(\rho, t)|\, d\rho$ satisfying $\partial_\rho s=|\partial_\rho \boldsymbol{X}|$. We assume that the parametrization by $\rho$ is regular during the evolution, i.e., there is a constant $C>1$ such that $\frac{1}{C}\leq |\partial_\rho s(\rho, t)|\leq C$. The normal velocity $V_n$ can be given by this parametrization as
\begin{equation}\label{Vn by X}
    V_n=\boldsymbol{n}\cdot \partial_t \boldsymbol{X}.
\end{equation}

In order to get an explicit formula of $\mu$ from $\gamma(\boldsymbol{n})$,
let $\gamma(\boldsymbol{p})$ be a one-homogeneous extension of $\gamma(\boldsymbol{n})$ from ${\mathbb S}^1$ to ${\mathbb R}^2$, e.g.
\begin{equation}\label{gamma p}\gamma(\boldsymbol{p}):=
\begin{cases}
|\boldsymbol{p}|\,\gamma\left(\frac{\boldsymbol{p}}
{|\boldsymbol{p}|}\right),\qquad &\forall \boldsymbol{p}=(p_1,p_2)^T\in \mathbb{R}^2_*:=\mathbb{R}^2\setminus \{\boldsymbol{0}\},\\
0, &\boldsymbol{p}=\boldsymbol{0},
\end{cases}
\end{equation}
where $|\boldsymbol{p}|=\sqrt{p_1^2+p_2^2}$. Based on this homogeneous extension $\gamma(\boldsymbol{p})$, we can talk the regularity of $\gamma(\boldsymbol{n})$ by referring to the regularity of $\gamma(\boldsymbol{p})$, i.e., $\gamma(\boldsymbol{n})\in C^2(\mathbb{S}^1)\iff \gamma(\boldsymbol{p})\in C^2(\mathbb{R}^2\setminus \{\boldsymbol{0}\})$. Then we introduce the Cahn-Hoffman $\boldsymbol{\xi}$-vector \cite{hoffman1972vector,wheeler1999cahn} as
\begin{equation}\label{xi vector}
    \boldsymbol{\xi}:=\boldsymbol{\xi}(\boldsymbol{n}): \mathbb{S}^1\to \mathbb{R}^2, \quad \boldsymbol{n}\mapsto \boldsymbol{\xi}=(\xi_1, \xi_2)^T:=\nabla \gamma(\boldsymbol{p})|_{\boldsymbol{p}=\boldsymbol{n}}=\gamma(\boldsymbol{n})\boldsymbol{n}+(\boldsymbol{\xi}\cdot \boldsymbol{\tau})\boldsymbol{\tau},
\end{equation}
here $\boldsymbol{\tau}=\partial_s\boldsymbol{X}=\boldsymbol{n}^\perp$ is the unit tangential vector of $\Gamma$, and $\perp$ denoting the clockwise rotation by $\frac{\pi}{2}$. Then an explicit formulation of $\mu$ can be expressed as the $\boldsymbol{\xi}$-vector as \cite{jiang2019sharp}
\begin{equation}\label{def of mu}
    \mu:=\mu(\boldsymbol{n})=-\boldsymbol{n}\cdot \partial_s \boldsymbol{\xi}^\perp.
\end{equation}
Thus another equivalent geometric PDE for anisotropic surface diffusion can given as
\begin{subequations}
\label{eqn:governing continuous sd}
\begin{align}
\label{eqn:governing1 continuous sd}
&\boldsymbol{n}\cdot \partial_t \boldsymbol{X}=\partial_{ss} \mu,\\[0.5em]
\label{eqn:governing2 continuous sd}
&\mu=-\boldsymbol{n}\cdot \partial_s \boldsymbol{\xi}^\perp,\qquad
\boldsymbol{\xi}(\boldsymbol{n})=\nabla \gamma(\boldsymbol{p})|_{\boldsymbol{p}=\boldsymbol{n}}.
\end{align}
\end{subequations}

\subsection{The surface energy matrix}
Introduce the surface energy matrix $\boldsymbol{G}_k(\boldsymbol{n})$ as
\begin{equation}\label{def of G_k}
    \boldsymbol{G}_k(\boldsymbol{n}):=\gamma(\boldsymbol{n})I_2
    -\boldsymbol{n}\boldsymbol{\xi}^T+\boldsymbol{\xi}\boldsymbol{n}^T
    +k(\boldsymbol{n})\boldsymbol{n}\boldsymbol{n}^T:=
    \boldsymbol{G}_k^{(s)}(\boldsymbol{n})+\boldsymbol{G}^{(a)}(\boldsymbol{n}), \quad \forall \boldsymbol{n}\in \mathbb{S}^1,
\end{equation}
where $I_2$ is the $2\times 2$ identity matrix, $k(\boldsymbol{n}): \mathbb{S}^1\to \mathbb{R}$ is a nonnegative stabilizing function to be determined later, and $\boldsymbol{G}_k^{(s)}(\boldsymbol{n})$ and $\boldsymbol{G}^{(a)}(\boldsymbol{n})$ are a symmetric positive matrix
and an anti-symmetric matrix, respectively, which are given as
\begin{equation}\label{def of G_kanti}
    \boldsymbol{G}_k^{(s)}(\boldsymbol{n}):=\gamma(\boldsymbol{n})I_2
    +k(\boldsymbol{n})\boldsymbol{n}\boldsymbol{n}^T, \quad
    \boldsymbol{G}^{(a)}(\boldsymbol{n}):=
    -\boldsymbol{n}\boldsymbol{\xi}^T+\boldsymbol{\xi}\boldsymbol{n}^T, \quad \forall \boldsymbol{n}\in \mathbb{S}^1.
\end{equation}
Then we get the relationship between the weighted curvature $\mu$ and the newly constructed $\boldsymbol{G}_k(\boldsymbol{n})$ as

\begin{lemma}\label{lemma: mu and G_k} The weighted curvature $\mu$ defined in \eqref{def of mu} has the following alternative explicit expression
\begin{equation}\label{alt def of mu}
    \mu \boldsymbol{n}=-\partial_s (\boldsymbol{G}_k(\boldsymbol{n})\partial_s \boldsymbol{X}).
\end{equation}
\begin{proof}
From \cite[Lemma 2.1]{bao2021symmetrized}, we know that
\begin{equation}\label{muxi}
    \mu \boldsymbol{n}=-\partial_s \boldsymbol{\xi}^\perp,\quad \boldsymbol{\xi}^\perp=\gamma(\boldsymbol{n})\boldsymbol{\tau}
    -(\boldsymbol{\xi}\cdot \boldsymbol{\tau})\boldsymbol{n}.
\end{equation}
Thus it suffices to show $\boldsymbol{G}_k(\boldsymbol{n})\partial_s \boldsymbol{X}=\boldsymbol{\xi}^\perp$. Since $\partial_s\boldsymbol{X}=\boldsymbol{\tau}$, by using the definition of $\boldsymbol{G}_k(\boldsymbol{n})$ in \eqref{def of G_k}, we can simplify $\boldsymbol{G}_k(\boldsymbol{n})\partial_s \boldsymbol{X}$ as
\begin{align}\label{Gkmu1}
\boldsymbol{G}_k(\boldsymbol{n})\partial_s \boldsymbol{X}&=\boldsymbol{G}_k(\boldsymbol{n})\boldsymbol{\tau}=(\gamma(\boldsymbol{n})I_2-\boldsymbol{n}\boldsymbol{\xi}^T+\boldsymbol{\xi}\boldsymbol{n}^T+k(\boldsymbol{n})\boldsymbol{n}\boldsymbol{n}^T)\boldsymbol{\tau}\nonumber\\
&=\gamma(\boldsymbol{n})\boldsymbol{\tau}-(\boldsymbol{\xi}\cdot \boldsymbol{\tau})\boldsymbol{n}+(\boldsymbol{n}\cdot \boldsymbol{\tau})\boldsymbol{\xi}+k(\boldsymbol{n})(\boldsymbol{n}\cdot \boldsymbol{\tau})\boldsymbol{n}\nonumber\\
&=\gamma(\boldsymbol{n})\boldsymbol{\tau}-(\boldsymbol{\xi}\cdot \boldsymbol{\tau})\boldsymbol{n}=\boldsymbol{\xi}^\perp.
\end{align}
Combining \eqref{Gkmu1} and \eqref{muxi}, we get the desired equality
\eqref{alt def of mu}.\qed
\end{proof}
\end{lemma}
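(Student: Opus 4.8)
The plan is to reduce the differentiated identity \eqref{alt def of mu} to a purely pointwise (undifferentiated) linear-algebra identity, namely
\[
\boldsymbol{G}_k(\boldsymbol{n})\,\partial_s\boldsymbol{X}=\boldsymbol{\xi}^\perp,
\]
and then verify that identity by direct computation. Once this is available, \eqref{alt def of mu} follows immediately by applying $-\partial_s$ to both sides and invoking the known formula $\mu\boldsymbol{n}=-\partial_s\boldsymbol{\xi}^\perp$, which is exactly \cite[Lemma 2.1]{bao2021symmetrized} (recalled here as \eqref{muxi}), together with the explicit expression $\boldsymbol{\xi}^\perp=\gamma(\boldsymbol{n})\boldsymbol{\tau}-(\boldsymbol{\xi}\cdot\boldsymbol{\tau})\boldsymbol{n}$. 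So essentially all the work is in the pointwise step, and no further differentiation or integration by parts is needed.

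For the pointwise step I would use two structural facts. First, $\partial_s\boldsymbol{X}=\boldsymbol{\tau}$ with $|\boldsymbol{\tau}|=1$ and $\boldsymbol{n}\cdot\boldsymbol{\tau}=0$ since $\boldsymbol{\tau}=\boldsymbol{n}^\perp$. Second, from the one-homogeneity of the extension $\gamma(\boldsymbol{p})$ and the definition \eqref{xi vector}, Euler's identity gives $\boldsymbol{n}\cdot\boldsymbol{\xi}=\gamma(\boldsymbol{n})$, equivalently the orthogonal decomposition $\boldsymbol{\xi}=\gamma(\boldsymbol{n})\boldsymbol{n}+(\boldsymbol{\xi}\cdot\boldsymbol{\tau})\boldsymbol{\tau}$ already displayed in \eqref{xi vector}. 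With these in hand, I plug $\boldsymbol{\tau}$ into the definition \eqref{def of G_k}–\eqref{def of G_kanti} of $\boldsymbol{G}_k(\boldsymbol{n})$ and expand term by term: the identity block contributes $\gamma(\boldsymbol{n})\boldsymbol{\tau}$; the anti-symmetric part $-\boldsymbol{n}\boldsymbol{\xi}^T+\boldsymbol{\xi}\boldsymbol{n}^T$ contributes $-(\boldsymbol{\xi}\cdot\boldsymbol{\tau})\boldsymbol{n}+(\boldsymbol{n}\cdot\boldsymbol{\tau})\boldsymbol{\xi}=-(\boldsymbol{\xi}\cdot\boldsymbol{\tau})\boldsymbol{n}$ because $\boldsymbol{n}\cdot\boldsymbol{\tau}=0$; and the stabilizing block $k(\boldsymbol{n})\boldsymbol{n}\boldsymbol{n}^T$ contributes $k(\boldsymbol{n})(\boldsymbol{n}\cdot\boldsymbol{\tau})\boldsymbol{n}=\boldsymbol{0}$, again by orthogonality. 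Summing gives $\gamma(\boldsymbol{n})\boldsymbol{\tau}-(\boldsymbol{\xi}\cdot\boldsymbol{\tau})\boldsymbol{n}=\boldsymbol{\xi}^\perp$, as desired.

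I do not anticipate a genuine obstacle here; the lemma is an algebraic identity and the computation is short. The only points that require care are bookkeeping ones: making sure the sign convention for $\perp$ (clockwise rotation by $\pi/2$) is used consistently so that $\boldsymbol{\tau}=\boldsymbol{n}^\perp$ and $\boldsymbol{\xi}^\perp$ are formed with the same convention, and observing that it is precisely the orthogonality $\boldsymbol{n}\cdot\boldsymbol{\tau}=0$ that kills both the $(\boldsymbol{n}\cdot\boldsymbol{\tau})\boldsymbol{\xi}$ term and the entire $k(\boldsymbol{n})$-dependent term — so the value $\boldsymbol{G}_k(\boldsymbol{n})\boldsymbol{\tau}$ is independent of the stabilizing function $k$, which is what makes it legitimate to postpone the choice of $k$ to later sections. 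I would present the expansion as a short two- or three-line aligned display and then conclude.
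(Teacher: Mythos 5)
Your proposal is correct and follows essentially the same route as the paper's proof: both reduce the claim to the pointwise identity $\boldsymbol{G}_k(\boldsymbol{n})\partial_s\boldsymbol{X}=\boldsymbol{\xi}^\perp$, verify it by expanding $\boldsymbol{G}_k(\boldsymbol{n})\boldsymbol{\tau}$ and using $\boldsymbol{n}\cdot\boldsymbol{\tau}=0$ to annihilate the $\boldsymbol{\xi}\boldsymbol{n}^T$ and $k(\boldsymbol{n})\boldsymbol{n}\boldsymbol{n}^T$ contributions, and then conclude via $\mu\boldsymbol{n}=-\partial_s\boldsymbol{\xi}^\perp$ from \cite[Lemma 2.1]{bao2021symmetrized}. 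Your added remark that $\boldsymbol{G}_k(\boldsymbol{n})\boldsymbol{\tau}$ is independent of $k$ is a nice observation but does not change the argument.
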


\begin{remark}The surface energy matrix $\boldsymbol{G}_k(\boldsymbol{n})$ given here can not be symmetric unless $\boldsymbol{n}\boldsymbol{\xi}^T=\boldsymbol{\xi}\boldsymbol{n}^T$, which implies $\gamma(\boldsymbol{n})\boldsymbol{n}=\boldsymbol{n}\boldsymbol{\xi}^T\boldsymbol{n}=\boldsymbol{\xi}\boldsymbol{n}^T=\boldsymbol{\xi}$. This can only happen when $\gamma(\boldsymbol{n})$ is isotropic. For anisotropic $\gamma(\boldsymbol{n})$, $\boldsymbol{G}_k(\boldsymbol{n})$ is essentially different from the symmetric surface energy matrix $Z_k(\boldsymbol{n})$ introduced in \cite{bao2021symmetrized,bao2022symmetrized}. Moreover, if we choose $k(\boldsymbol{n})$ to be $0$, then $\boldsymbol{G}_0(\boldsymbol{n})$ will collapse to the surface energy matrix $G(\theta)$ given in \cite{li2020energy}. This fact also means different from the stabilizing function in \cite{bao2021symmetrized}, in this paper we do not need the stabilizing function $k(\boldsymbol{n})$ to be strictly positive.
\end{remark}

By applying \eqref{alt def of mu}, the geometric PDE for anisotropic surface diffusion \eqref{eqn:governing continuous sd} can be rewritten as the following \textit{conservative form}
\begin{subequations}
\label{eqn:conserve continuous sd}
\begin{align}
\label{eqn:conserve1 continuous sd}
&\boldsymbol{n}\cdot \partial_t \boldsymbol{X}=\partial_{ss} \mu,\\[0.5em]
\label{eqn:conserve2 continuous sd}
&\mu\boldsymbol{n}=-\partial_s(\boldsymbol{G}_k(\boldsymbol{n})\partial_s \boldsymbol{X}).
\end{align}
\end{subequations}

\subsection{A variational formulation and its properties}
We then derive the variational formulation for the conservative form \eqref{eqn:conserve continuous sd}. The functional space with respect to $\Gamma(t)$ can be given as
\begin{equation}
    L^2(\mathbb{T}):=\left\{u: \mathbb{T}\to \mathbb{R}\;|\; \int_{\mathbb{T}} |u(\rho)|^2 d\rho<\infty\right\},
\end{equation}
with the following weighted $L^2$-inner product $(\cdot, \cdot)_{\Gamma(t)}$
\begin{equation}
    \Bigl(u, v\Bigr)_{\Gamma(t)}:=\int_{\Gamma(t)} u(s) v(s)ds=\int_{\mathbb{T}} u(\rho) v(\rho) \partial_\rho s(\rho,t)\, d\rho, \quad \forall u,v\in L^2(\mathbb{T}).
\end{equation}
Since we assume the parameterization is regular; the weighted $L^2$-inner product and the $L^2$ space are equivalent to the usual one, which is independent of $t$. The Sobolev space $H^1(\mathbb{T})$ is defined as
\begin{equation}
    H^1(\mathbb{T}):=\left\{u: \mathbb{T}\to \mathbb{R}\;|\; u\in L^2(\mathbb{T}), \partial_s u\in L^2(\mathbb{T})\right\}.
\end{equation}
Moreover, we can extend the above definitions to the functions in $[L^2(\mathbb{T})]^2$ and $[H^1(\mathbb{T})]^2$.

Multiplying a test function $\phi\in H^1(\mathbb{T})$ to
 \eqref{eqn:conserve1 continuous sd}, integrating over $\Gamma(t)$ and taking integration by parts, we obtain
\begin{equation}\label{variation, sd}
    \Bigl(\boldsymbol{n}\cdot \partial_t\boldsymbol{X}, \phi\Bigr)_{\Gamma(t)}=-\Bigl(\partial_s\mu, \partial_s\phi\Bigr)_{\Gamma(t)}.
\end{equation}
Similarly, by multiplying a test function $\boldsymbol{\omega}=(\omega_1, \omega_2)^T\in [H^1(\mathbb{T})]^2$ to \eqref{eqn:conserve2 continuous sd}, we deduce that
\begin{equation}\label{variation, mu}
    \Bigl(\mu \boldsymbol{n}, \boldsymbol{\omega}\Bigr)_{\Gamma(t)}=\Bigl(\boldsymbol{G}_k(\boldsymbol{n})\partial_s \boldsymbol{X}, \partial_s\boldsymbol{\omega}\Bigr)_{\Gamma(t)}.
\end{equation}

Based on the two equations \eqref{variation, sd} and \eqref{variation, mu}, the new variational formulation for the conservative form \eqref{eqn:conserve continuous sd} can be stated as follows: Suppose the initial curve $\Gamma_0:=\boldsymbol{X}(\cdot, 0)=(x(\cdot, 0), y(\cdot, 0))^T\in [H^1(\mathbb{T})]^2$ and the initial weighted curvature $\mu(\cdot, 0):=\mu_0(\cdot)\in H^1(\mathbb{T})$, then for any $t>0$, find the solution $(\boldsymbol{X}(\cdot, t), \mu(\cdot, t))\in [H^1(\mathbb{T})]^2\times H^1(\mathbb{T})$ satisfying
\begin{subequations}
\label{eqn:weak continuous sd}
\begin{align}
\label{eqn:weak1 continuous sd}
&\Bigl(\boldsymbol{n}\cdot\partial_{t}\boldsymbol{X},
\varphi\Bigr)_{\Gamma(t)}+\Bigl(\partial_{s}\mu,
\partial_s\varphi\Bigr)_{\Gamma(t)}=0,\qquad\forall \varphi\in H^1(\mathbb{T}),\\[0.5em]
\label{eqn:weak2 continuous sd}
&\Bigl(\mu\boldsymbol{n},\boldsymbol{\omega}\Bigr)_{\Gamma(t)}-\Bigl( \boldsymbol{G}_k(\boldsymbol{n})\partial_s \boldsymbol{X},\partial_s\boldsymbol{\omega}\Bigr)_{\Gamma(t)}= 0,\quad\forall\boldsymbol{\omega}\in [H^1(\mathbb{T})]^2.
\end{align}
\end{subequations}

Denote the area of the region enclosed by $\Gamma(t)$ as $A(t)$ and the free interfacial energy of $\Gamma(t)$ as $W(t)$, which are given by
\begin{equation}\label{area cont}
A(t):=\int_{\Gamma(t)}y(\rho, t) \partial_\rho x(\rho, t) d\rho,\quad
W(t):=\int_{\Gamma(t)}\gamma(\boldsymbol{n})\,ds, \qquad t\ge0.
 \end{equation}
We can show that the two geometric properties, i.e. area conservation and energy dissipation,  still hold for the weak formulation
\eqref{eqn:weak continuous sd}.
\begin{prop}[area conservation and energy dissiption] Suppose $\Gamma(t)$ is given by the solution $(\boldsymbol{X}(\cdot, t), \mu(\cdot, t))$ of the variational formulation \eqref{eqn:weak continuous sd}, we have \begin{equation}\label{geo properties, cont, sd}
    A(t)\equiv A(0),\qquad W(t)\leq W(t_1)\leq W(0), \qquad \forall t\geq t_1\geq 0.
\end{equation}
\end{prop}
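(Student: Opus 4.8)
The plan is to differentiate $A(t)$ and $W(t)$ in time and then substitute carefully chosen test functions into the weak formulation \eqref{eqn:weak continuous sd}; implicit throughout is the assumption that the solution is smooth enough (e.g.\ $\partial_t\boldsymbol{X}\in[H^1(\mathbb{T})]^2$ for a.e.\ $t$) to justify differentiation under the integral sign and the use of $\partial_t\boldsymbol{X}$ and $\mu$ as test functions.

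For the area conservation, I would first establish the transport identity $\frac{d}{dt}A(t)=\bigl(\boldsymbol{n}\cdot\partial_t\boldsymbol{X},1\bigr)_{\Gamma(t)}$: differentiating \eqref{area cont}, integrating by parts in $\rho$ on the periodic domain $\mathbb{T}$, and using the relation $-(\partial_\rho\boldsymbol{X})^\perp=\boldsymbol{n}\,\partial_\rho s$ (a consequence of $\boldsymbol{\tau}=\boldsymbol{n}^\perp$ and $\partial_\rho s=|\partial_\rho\boldsymbol{X}|$) to rewrite the integrand as $(\boldsymbol{n}\cdot\partial_t\boldsymbol{X})\,\partial_\rho s$. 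Then taking the constant test function $\varphi\equiv 1$ in \eqref{eqn:weak1 continuous sd} yields $\bigl(\boldsymbol{n}\cdot\partial_t\boldsymbol{X},1\bigr)_{\Gamma(t)}=-\bigl(\partial_s\mu,\partial_s 1\bigr)_{\Gamma(t)}=0$, so $\frac{d}{dt}A(t)\equiv 0$ and $A(t)\equiv A(0)$.

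For the energy dissipation, the crucial step is the identity
\[
\frac{d}{dt}W(t)=\bigl(\boldsymbol{G}_k(\boldsymbol{n})\partial_s\boldsymbol{X},\,\partial_s(\partial_t\boldsymbol{X})\bigr)_{\Gamma(t)}.
\]
To prove it I would rewrite $W(t)=\int_{\mathbb{T}}\gamma\bigl(-(\partial_\rho\boldsymbol{X})^\perp\bigr)\,d\rho$, using the one-homogeneity of the extension \eqref{gamma p} together with $\boldsymbol{n}\,\partial_\rho s=-(\partial_\rho\boldsymbol{X})^\perp$; differentiate under the integral, observing that $\nabla\gamma$ is zero-homogeneous so that it evaluates to $\boldsymbol{\xi}=\nabla\gamma(\boldsymbol{p})|_{\boldsymbol{p}=\boldsymbol{n}}$; apply the rotation identity $\boldsymbol{\xi}\cdot\boldsymbol{v}^\perp=-\boldsymbol{\xi}^\perp\cdot\boldsymbol{v}$ and an integration by parts back to the $s$–variable; and finally recognize $\boldsymbol{\xi}^\perp=\boldsymbol{G}_k(\boldsymbol{n})\partial_s\boldsymbol{X}$, which is exactly the computation \eqref{Gkmu1} in the proof of Lemma \ref{lemma: mu and G_k}. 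With this identity in hand, choosing $\boldsymbol{\omega}=\partial_t\boldsymbol{X}$ in \eqref{eqn:weak2 continuous sd} gives $\frac{d}{dt}W(t)=\bigl(\mu\boldsymbol{n},\partial_t\boldsymbol{X}\bigr)_{\Gamma(t)}=\bigl(\mu,\boldsymbol{n}\cdot\partial_t\boldsymbol{X}\bigr)_{\Gamma(t)}$, and choosing $\varphi=\mu$ in \eqref{eqn:weak1 continuous sd} gives $\bigl(\mu,\boldsymbol{n}\cdot\partial_t\boldsymbol{X}\bigr)_{\Gamma(t)}=-\bigl(\partial_s\mu,\partial_s\mu\bigr)_{\Gamma(t)}\le 0$. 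Hence $\frac{d}{dt}W(t)=-\bigl(\partial_s\mu,\partial_s\mu\bigr)_{\Gamma(t)}\le 0$ for every $t$, and the monotonicity of $W$ immediately yields $W(t)\le W(t_1)\le W(0)$ for all $t\ge t_1\ge 0$.

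I expect the energy-derivative identity to be the main obstacle, since it is the only place that requires a genuine computation rather than a bookkeeping substitution: one must carefully track the $\rho\mapsto s$ change of variables, the one-homogeneity of $\gamma$ and zero-homogeneity of $\nabla\gamma$, the sign conventions attached to $\perp$, and the reduction $\boldsymbol{G}_k(\boldsymbol{n})\partial_s\boldsymbol{X}=\boldsymbol{\xi}^\perp$; once it is secured, the area identity and the two sign-definite estimates follow by simply inserting $\varphi\equiv 1$, $\boldsymbol{\omega}=\partial_t\boldsymbol{X}$, and $\varphi=\mu$ into the weak form. Note that the anti-symmetric part $\boldsymbol{G}^{(a)}(\boldsymbol{n})$ and the stabilizing term $k(\boldsymbol{n})\boldsymbol{n}\boldsymbol{n}^T$ play no role here because they are invisible when the matrix acts on $\boldsymbol{\tau}=\partial_s\boldsymbol{X}$, so no condition on $\gamma$ beyond $\gamma\in C^2$ is needed at the continuous level.
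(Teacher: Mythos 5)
Your proof is correct and follows exactly the route the paper intends: the paper omits this proof, referring to \cite[Proposition 2.1]{li2020energy}, and your argument (transport identity for $A$, the energy-derivative identity via one-homogeneity of $\gamma$ and the reduction $\boldsymbol{G}_k(\boldsymbol{n})\partial_s\boldsymbol{X}=\boldsymbol{\xi}^\perp$, then the test functions $\varphi\equiv 1$, $\varphi=\mu$, $\boldsymbol{\omega}=\partial_t\boldsymbol{X}$) is precisely that standard argument and is also the continuous analogue of the discrete proof given in Section 3. Your closing observation that the anti-symmetric and stabilizing parts are invisible when $\boldsymbol{G}_k(\boldsymbol{n})$ acts on $\boldsymbol{\tau}$ is exactly the content of Lemma \ref{lemma: mu and G_k}; the only nitpick is that the step you call ``integration by parts back to the $s$-variable'' in the energy computation is really just the change of variables $d\rho\mapsto ds$.
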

The proof is similar to \cite[Proposition 2.1]{li2020energy} and details are omitted for brevity.

\subsection{A semi-discretization in space}
To obtain the spatial discretization, suppose $N>2$ be an integer, $h=\frac{1}{N}$ is the mesh size, $\mathbb{T}=[0, 1]=\cup_{j=1}^N I_j:=\cup_{j=1}^N [\rho_{j-1}, \rho_j]$ with $\rho_j=jh, j=0, 1, \ldots N$, and we know $\rho_N=\rho_0$ by periodic. The closed curve $\Gamma(t)=\boldsymbol{X}(\cdot, t)$ is approximated by the closed line segments $\Gamma^h(t)=\boldsymbol{X}^h(\cdot, t)=(x^h(\cdot, t), y^h(\cdot, t))^T$ satisfying $\boldsymbol{X}^h(\rho_j, 0)=\boldsymbol{X}(\rho_j, 0)$. And the discretization for test function space $H^1(\mathbb{T})$ with respect to $\Gamma^h(t)$ is given by the following space of piecewise linear finite element functions
\begin{equation}
    \mathbb{K}^h=\mathbb{K}^h(\mathbb{T}):=\left\{u\in C(\mathbb{T}) | \quad u|_{I_j}\in \mathcal{P}^1(I_j), \forall 1\leq j\leq N\right\},
\end{equation}
where $\mathcal{P}^1(I_j)$ is the set of polynomials defined on $I_j$ of degree $\leq 1$. The mass lumped inner product for two function $u, v\in \mathbb{K}^h(\mathbb{T})$ with respect to $\Gamma^h(t)$ is defined as
\begin{equation}\label{mass lumped inner product}
    \left(u, v\right)^h_{\Gamma^h(t)}:=\frac{1}{2}\sum_{j=1}^N |\boldsymbol{h}_j(t)|\,\left((u\cdot v)(\rho_{j-1}^+)+(u\cdot v)(\rho_j^-)\right).
\end{equation}
where $\boldsymbol{h}_j(t)=\boldsymbol{X}^h(\rho_j,t)-\boldsymbol{X}^h(\rho_{j-1}, t)$, and $u(\rho_j^\pm)=\lim\limits_{\rho\to \rho_j^\pm} u(\rho)$. We note this mass lumped inner product is also valid for $[\mathbb{K}^h]^2$ and the piecewise constant vector-valued functions with possible jump discontinuities at $\rho_j$ for $0\leq j\leq N$.

The discretized unit normal vector $\boldsymbol{n}^h(t)$, unit tangential vector $\boldsymbol{\tau}^h(t)$, and the $\boldsymbol{\xi}$-vector $\boldsymbol{\xi}^h(t)$ are such piecewise constant vectors on $\Gamma^h(t)$, which are given as
\begin{equation}
    \boldsymbol{n}_j^h:=\boldsymbol{n}^h|_{I_j}=-\frac{\boldsymbol{h}_j^\perp}{|\boldsymbol{h}_j|}, \quad \boldsymbol{\tau}_j^h:=\boldsymbol{\tau}^h|_{I_j}=(\boldsymbol{n}_j^h)^\perp,  \quad \boldsymbol{\xi}_j^h:=\boldsymbol{\xi}^h|_{I_j}=\nabla \gamma(\boldsymbol{p})|_{\boldsymbol{p}=\boldsymbol{n}_j^h}.
 \end{equation}
 Here we use $\boldsymbol{n}^h$ to represent $\boldsymbol{n}^h(t)$ for short. To make sure $\boldsymbol{n}^h, \boldsymbol{\tau}^h, \boldsymbol{\xi}^h$ are well defined, we need the following assumption on $\boldsymbol{h}_j(t)$
 \begin{equation}\label{assumption on h}
    \min_{1\leq j\leq N}|\boldsymbol{h}_j(t)|>0,\quad \forall t\geq 0.
 \end{equation}

 After giving all the continuous objects their discretized versions, we can state the spatial semi-discretization as follows: Let $\Gamma^h_0:= \boldsymbol{X}^h(\cdot, 0)\in [\mathbb{K}^h]^2, \mu^h(\cdot, 0)\in \mathbb{K}^h$ be the approximations of $\Gamma_0:=\boldsymbol{X}_0(\cdot), \mu_0(\cdot)$, respectively, with $\boldsymbol{X}^h(\rho_j, 0)=\boldsymbol{X}(\rho_j, 0), \\ \mu^h(\rho_j, 0)=\mu_0(\rho_j)$ for $0\leq j\leq N$, find the solution $(\boldsymbol{X}^h(\cdot, t), \mu^h(\cdot, t))\in [\mathbb{K}^h]^2\times \mathbb{K}^h$, such that
\begin{subequations}
\label{eqn:2dsemi sd}
\begin{align}
\label{eqn:2dsemi1 sd}
&\Bigl(\boldsymbol{n}^h\cdot\partial_{t}\boldsymbol{X}^h,
\varphi^h\Bigr)_{\Gamma^h(t)}^h+\Bigl(\partial_{s}\mu^h,
\partial_s\varphi^h\Bigr)_{\Gamma^h(t)}^h=0,\qquad\forall \varphi^h\in \mathbb{K}^h,\\
\label{eqn:2dsemi2 sd}
&\Bigl(\mu^h\boldsymbol{n}^h,\boldsymbol{\omega}^h\Bigr)_{\Gamma^h(t)}^h-\Bigl( \boldsymbol{G}_k(\boldsymbol{n}^h)\partial_s \boldsymbol{X}^h,\partial_s\boldsymbol{\omega}^h\Bigr)_{\Gamma^h(t)}^h=0,
\quad\forall\boldsymbol{\omega}^h\in[\mathbb{K}^h]^2,
\end{align}
\end{subequations}
where
\begin{equation}
\boldsymbol{G}_k(\boldsymbol{n}^h)|_{I_j}=\gamma(\boldsymbol{n}^h_j)I_2-\boldsymbol{n}_j^h (\boldsymbol{\xi}_j^h)^T+\boldsymbol{\xi}_j^h(\boldsymbol{n}^h_j)^T+k(\boldsymbol{n}^h_j)\,\boldsymbol{n}_j^h(\boldsymbol{n}_j^h)^T,
\end{equation}
and the discretized derivative $\partial_s$ for a scalar and vector valued functions $f$ and $\boldsymbol{f}$, respectively, are given as
\begin{equation}
    \partial_s f|_{I_j}:=\frac{f(\rho_j)-f(\rho_{j-1})}{|\boldsymbol{h}_j|}, \quad \partial_s \boldsymbol{f}|_{I_j}:=\frac{\boldsymbol{f}(\rho_j)-\boldsymbol{f}(\rho_{j-1})}{|\boldsymbol{h}_j|},
\end{equation}
and assumption \eqref{assumption on h} ensures $\partial_s f$ and $\partial_s \boldsymbol{f}$ are piecewise constant functions with possible jump discontinuities at $\rho_j$ for $0\leq j\leq N$, thus the mass lumped inner product terms like $\Bigl(\partial_{s}\mu^h, \partial_s\varphi^h\Bigr)_{\Gamma^h(t)}^h$ in \eqref{eqn:2dsemi sd} are well defined.

Denote the enclosed area and the total energy of the closed line segments $\Gamma^h(t)$ as $A^h(t)$ and  $W^h(t)$, respectively, which are given by
\begin{equation}\label{discrete area}
A^h(t)=\frac{1}{2}\sum_{j=1}^N(x_j^h(t)-x_{j-1}^h(t))(y_j^h(t)+y_{j-1}^h(t)),
\quad W^h(t)=\sum_{j=1}^N|\boldsymbol{h}_j(t)|\gamma(\boldsymbol{n}^h_j),
\end{equation}
where $x_j^h(t):=x^h(\rho_j, t), y_j^h(t):=y^h(\rho_j, t), \forall 0\leq j\leq N$. Then following the same statement in the proof of \cite[Proposition 3.1]{li2020energy}, we know that the spatial discretization \eqref{eqn:2dsemi sd} still preserves the two geometric properties.
\begin{prop}[area conservation and energy dissiption] Suppose $\Gamma^h(t)$ is given by the solution $(\boldsymbol{X}^h(\cdot, t), \mu^h(\cdot, t))$ of \eqref{eqn:2dsemi sd}, then we have
\begin{equation}
    A^h(t)\equiv A^h(0),\quad W^h(t)\leq W^h(t_1)\leq W^h(0), \quad \forall t\geq t_1\geq 0.
\end{equation}
\end{prop}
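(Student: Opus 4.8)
The plan is to mimic the semidiscrete-in-space argument at the fully discrete level only in spirit: here the statement is for the spatial semi-discretization \eqref{eqn:2dsemi sd}, so $\boldsymbol{X}^h$ and $\mu^h$ are still smooth in $t$, and both $A^h(t)$ and $W^h(t)$ are differentiable in $t$. Hence it suffices to show $\frac{d}{dt}A^h(t)=0$ and $\frac{d}{dt}W^h(t)\le 0$ for all $t\ge 0$; integrating these two inequalities in $t$ gives both claims simultaneously.

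For area conservation, I would differentiate $A^h(t)=\frac12\sum_{j=1}^N(x_j^h-x_{j-1}^h)(y_j^h+y_{j-1}^h)$ directly in $t$ and reorganize the sum (using periodicity to shift indices) into the form $\sum_{j=1}^N \int_{I_j}(\boldsymbol{n}^h\cdot\partial_t\boldsymbol{X}^h)\,|\partial_\rho\boldsymbol{X}^h|\,d\rho$-type expression; more precisely, the standard identity gives $\frac{d}{dt}A^h(t)=\bigl(\boldsymbol n^h\cdot\partial_t\boldsymbol X^h,\,1\bigr)^h_{\Gamma^h(t)}$ (this is exactly the discrete analogue used in \cite{li2020energy}, and follows because the piecewise-linear $\partial_t\boldsymbol X^h$ and the constant-per-element $\boldsymbol n^h$ make the mass-lumped pairing against the constant test function $1$ reproduce the elementwise trapezoidal evaluation of $\sum_j (x_j^h-x_{j-1}^h)\dot y$-terms). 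Then I choose $\varphi^h\equiv 1\in\mathbb{K}^h$ in \eqref{eqn:2dsemi1 sd}; since $\partial_s 1\equiv 0$ the second term vanishes and we get $\bigl(\boldsymbol n^h\cdot\partial_t\boldsymbol X^h,\,1\bigr)^h_{\Gamma^h(t)}=0$, hence $\frac{d}{dt}A^h(t)=0$.

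For energy dissipation, I would compute $\frac{d}{dt}W^h(t)=\frac{d}{dt}\sum_{j=1}^N|\boldsymbol h_j(t)|\,\gamma(\boldsymbol n_j^h)$ using $\partial_t|\boldsymbol h_j|=\frac{\boldsymbol h_j}{|\boldsymbol h_j|}\cdot\partial_t\boldsymbol h_j=\boldsymbol\tau_j^h\cdot\partial_t\boldsymbol h_j$ and $\partial_t\gamma(\boldsymbol n_j^h)=\nabla\gamma(\boldsymbol n_j^h)\cdot\partial_t\boldsymbol n_j^h=\boldsymbol\xi_j^h\cdot\partial_t\boldsymbol n_j^h$, then express $\partial_t\boldsymbol n_j^h$ in terms of $\partial_t\boldsymbol h_j$ via $\boldsymbol n_j^h=-\boldsymbol h_j^\perp/|\boldsymbol h_j|$. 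The key algebraic point is that the resulting expression collects into $\sum_{j=1}^N \bigl(\boldsymbol G_k(\boldsymbol n_j^h)\,\tfrac{\partial_t\boldsymbol h_j}{|\boldsymbol h_j|}\bigr)\cdot\tfrac{\boldsymbol h_j}{|\boldsymbol h_j|}\,|\boldsymbol h_j|$, i.e. exactly $\bigl(\boldsymbol G_k(\boldsymbol n^h)\partial_s\boldsymbol X^h,\partial_s(\partial_t\boldsymbol X^h)\bigr)^h_{\Gamma^h(t)}$ — this is where Lemma \ref{lemma: mu and G_k}, or rather the identity $\boldsymbol G_k(\boldsymbol n)\boldsymbol\tau=\boldsymbol\xi^\perp$ established inside its proof, does the work; note the stabilizing term $k(\boldsymbol n)\boldsymbol n\boldsymbol n^T$ contributes nothing since $\boldsymbol n\cdot\partial_s\boldsymbol X^h=0$ elementwise. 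Having identified $\frac{d}{dt}W^h(t)=\bigl(\boldsymbol G_k(\boldsymbol n^h)\partial_s\boldsymbol X^h,\partial_s\partial_t\boldsymbol X^h\bigr)^h_{\Gamma^h(t)}$, I then take $\boldsymbol\omega^h=\partial_t\boldsymbol X^h\in[\mathbb{K}^h]^2$ in \eqref{eqn:2dsemi2 sd} and $\varphi^h=\mu^h\in\mathbb{K}^h$ in \eqref{eqn:2dsemi1 sd}, which chains to $\frac{d}{dt}W^h(t)=\bigl(\mu^h\boldsymbol n^h,\partial_t\boldsymbol X^h\bigr)^h_{\Gamma^h(t)}=\bigl(\mu^h,\boldsymbol n^h\cdot\partial_t\boldsymbol X^h\bigr)^h_{\Gamma^h(t)}=-\bigl(\partial_s\mu^h,\partial_s\mu^h\bigr)^h_{\Gamma^h(t)}=-\|\partial_s\mu^h\|^2\le 0$.

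The main obstacle is the energy-derivative computation: one must verify carefully that the time derivative of the discrete energy assembles \emph{exactly} into the bilinear form $\bigl(\boldsymbol G_k(\boldsymbol n^h)\partial_s\boldsymbol X^h,\partial_s\partial_t\boldsymbol X^h\bigr)^h_{\Gamma^h(t)}$ with no leftover terms, which hinges on the precise structure of $\boldsymbol G_k$ (the symmetric part $\gamma I_2+k\,\boldsymbol n\boldsymbol n^T$ versus the antisymmetric part $-\boldsymbol n\boldsymbol\xi^T+\boldsymbol\xi\boldsymbol n^T$) and on the elementwise orthogonality $\boldsymbol n_j^h\cdot\boldsymbol h_j=0$. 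Since this is formally the continuous computation of \cite[Proposition 3.1]{li2020energy} transcribed to the mass-lumped discrete inner products — and the mass lumping is harmless here because every integrand involved is already piecewise constant on each $I_j$ — the details are routine and I would simply refer to that proof, as the excerpt does. Everything else (choosing the test functions, the integration-by-parts-free cancellations) is immediate once the energy identity is in hand.
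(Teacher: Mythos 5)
Your proposal is correct and is essentially the argument the paper has in mind: the paper omits the proof, deferring to \cite[Proposition 3.1]{li2020energy}, and that argument is exactly your computation — $\frac{d}{dt}A^h=(\boldsymbol n^h\cdot\partial_t\boldsymbol X^h,1)^h_{\Gamma^h(t)}=0$ via $\varphi^h\equiv1$, and $\frac{d}{dt}W^h=\bigl(\boldsymbol G_k(\boldsymbol n^h)\partial_s\boldsymbol X^h,\partial_s\partial_t\boldsymbol X^h\bigr)^h_{\Gamma^h(t)}=-\|\partial_s\mu^h\|^2\le0$ via the identity $\boldsymbol G_k(\boldsymbol n)\boldsymbol\tau=\boldsymbol\xi^\perp$ and the test functions $\boldsymbol\omega^h=\partial_t\boldsymbol X^h$, $\varphi^h=\mu^h$. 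No gaps.
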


\subsection{A structure-preserving PFEM}
We then consider the full discretization. Let $\tau$ be the uniform time step, and $\Gamma^m=\boldsymbol{X}^m(\cdot)\in[\mathbb{K}^h]^2$ be the approximation of $\Gamma^h(t_m)=\boldsymbol{X}^h(\cdot, t_m), \forall m\geq 0$, where $t_m:=m\tau$. Suppose $\boldsymbol{h}^m_j:=\boldsymbol{X}^m(\rho_j)-\boldsymbol{X}^m(\rho_{j-1})$, we can similarly give the definitions for the mass lumped inner product $\left(\cdot, \cdot\right)^h_{\Gamma^m}$ as well as the unit normal vector $\boldsymbol{n}^m$, the unit tangential vector $\boldsymbol{\tau}^m$, and the $\boldsymbol{\xi}$-vector $\boldsymbol{\xi}^m$ with respect to $\Gamma^m$. By adopting the backward Euler method, the fully-implicit structure-preserving discretization of PFEM for anisotropic surface diffusion
\eqref{eqn:governing continuous sd} can then be given as:

Suppose the initial approximation $\Gamma^0(\cdot)\in [\mathbb{K}^h]^2$ is given by $\boldsymbol{X}^0(\rho_j)=\boldsymbol{X}_0(\rho_j), \forall 0\leq j\leq N$. For any $m=0, 1, 2, \ldots$, find the solution $(\boldsymbol{X}^m(\cdot)=(x^m(\cdot),y^m(\cdot))^T, \mu^m(\cdot))\in [\mathbb{K}^h]^2\times \mathbb{K}^h$, such that
\begin{subequations}
\label{eqn:2dfull sd}
\begin{align}
\label{eqn:2dfull1 sd}
&\Bigl(\boldsymbol{n}^{m+\frac{1}{2}}\cdot\frac{\boldsymbol{X}^{m+1}-\boldsymbol{X}^m}{\tau},
\varphi^h\Bigr)_{\Gamma^m}^h+\Bigl(\partial_{s}\mu^{m+1},
\partial_s\varphi^h\Bigr)_{\Gamma^m}^h=0,\quad\forall \varphi^h\in \mathbb{K}^h,\\
\label{eqn:2dfull2 sd}
&\Bigl(\mu^{m+1}\boldsymbol{n}^{m+\frac{1}{2}},\boldsymbol{\omega}^h\Bigr)_{\Gamma^m}^h-\Bigl( \boldsymbol{G}_k(\boldsymbol{n}^m)\partial_s \boldsymbol{X}^{m+1},\partial_s\boldsymbol{\omega}^h\Bigr)_{\Gamma^m}^h=0,
\quad\forall\boldsymbol{\omega}^h\in[\mathbb{K}^h]^2,
\end{align}
\end{subequations}
where
\begin{equation}
\boldsymbol{G}_k(\boldsymbol{n}^m)|_{I_j}=\gamma(\boldsymbol{n}^m_j)I_2-\boldsymbol{n}_j^m (\boldsymbol{\xi}_j^m)^T+\boldsymbol{\xi}_j^m(\boldsymbol{n}^m_j)^T+k(\boldsymbol{n}^m_j)\,\boldsymbol{n}_j^m(\boldsymbol{n}_j^m)^T,
\end{equation}
and
\begin{equation}
    \boldsymbol{n}^{m+\frac{1}{2}}:=-\frac{1}{2}\frac{1}{|\partial_\rho \boldsymbol{X}^m|}(\partial_\rho \boldsymbol{X}^m+\partial_\rho\boldsymbol{X}^{m+1})^\perp.
\end{equation}
The SP-PFEM \eqref{eqn:2dfull sd} is fully-implicit, and we can apply Newton's iterative method proposed in \cite{bao2021structurepreserving} to solve it numerically. If $\boldsymbol{n}^{m+\frac{1}{2}}$ is replaced by $\boldsymbol{n}^{m}$, then \eqref{eqn:2dfull sd} becomes semi-implicit. However, the clever choice $\boldsymbol{n}^{m+\frac{1}{2}}$ is critical in preserving the area conservation, and the semi-implicit PFEM can only preserve the energy dissipation property.

\subsection{Main results}
Denote the enclosed area and the total energy of the closed line segments $\Gamma^m$ as $A^m$ and  $W^m$, respectively, which are given by
\begin{subequations}\label{fully discrete area and energy}
\begin{align}
\label{fully discrete area and energy, area}
A^m&=\frac{1}{2}\sum_{j=1}^N(x^m(\rho_j)-x^m(\rho_{j-1})(y^m(\rho_j)
+y^m(\rho_{j-1})),\\
\label{fully discrete area and energy, energy}
W^m&=\sum_{j=1}^N|\boldsymbol{h}_j^m|\gamma(\boldsymbol{n}^m_j).
\end{align}
\end{subequations}

Introduce two auxiliary functions $P_{\alpha}(\boldsymbol{n}, \hat{\boldsymbol{n}}), Q(\boldsymbol{n}, \hat{\boldsymbol{n}})$ as
\begin{subequations}
\begin{align}
\label{def of F_k}
&P_{\alpha}(\boldsymbol{n}, \hat{\boldsymbol{n}}):=2\sqrt{(\gamma(\boldsymbol{n})
+\alpha(\hat{\boldsymbol{n}}\cdot \boldsymbol{n}^\perp)^2)\gamma(\boldsymbol{n})},\quad \forall \boldsymbol{n}\in \mathbb{S}^1, \quad \alpha\ge0,\\
\label{def of F}
&Q(\boldsymbol{n}, \hat{\boldsymbol{n}}):=\gamma(\hat{\boldsymbol{n}})+\gamma(\boldsymbol{n})(\boldsymbol{n}\cdot \hat{\boldsymbol{n}})-(\boldsymbol{\xi}\cdot \boldsymbol{n}^\perp)(\hat{\boldsymbol{n}}\cdot \boldsymbol{n}^\perp), \quad \forall \boldsymbol{n}, \hat{\boldsymbol{n}}\in \mathbb{S}^1,
\end{align}
\end{subequations}
we define the minimal stabilizing function $k_0(\boldsymbol{n})$ as
(its existence will be given in next section)
\begin{equation}\label{def of k0}
    k_0(\boldsymbol{n}):=\inf\{\alpha\geq 0: P_{\alpha}(\boldsymbol{n}, \hat{\boldsymbol{n}})-Q(\boldsymbol{n}, \hat{\boldsymbol{n}})\geq 0, \ \forall \hat{\boldsymbol{n}}\in\mathbb{S}^1\}, \quad \forall \boldsymbol{n}\in \mathbb{S}^1.
\end{equation}
For the SP-PFEM \eqref{eqn:2dfull sd}, we have
\begin{theorem}[structure-preserving]\label{thm: main} Suppose $\gamma(\boldsymbol{n})$ satisfies
\begin{equation}\label{es condition on gamma}
    3\gamma(\boldsymbol{n})>\gamma(-\boldsymbol{n}), \quad \forall \boldsymbol{n}\in \mathbb{S}^1, \quad \gamma(\boldsymbol{p})\in C^2\left(\mathbb{R}^2\setminus \{\boldsymbol{0}\}\right),
\end{equation}
and take the stabilizing function $k(\boldsymbol{n})\geq k_0(\boldsymbol{n})$ for $\boldsymbol{n}\in \mathbb{S}^1$, then the SP-PFEM \eqref{eqn:2dfull sd} preserves the two geometric properties in the discrete level, i.e.,
\begin{equation}\label{geo properties, full, sd}
    A^{m+1}\equiv A^0,\qquad W^{m+1}\leq W^m\leq\ldots \leq W^0, \qquad \forall m\geq 0.
    \end{equation}
\end{theorem}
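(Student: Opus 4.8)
The plan is to extract the two geometric laws directly from the fully discrete system \eqref{eqn:2dfull1 sd}--\eqref{eqn:2dfull2 sd} by judicious choices of the test functions, and then to reduce the energy inequality to a single scalar estimate on each mesh element for which the definition \eqref{def of k0} of the stabilizing function $k_0(\boldsymbol{n})$ is tailor-made. For area conservation I would first take $\varphi^h\equiv 1$ in \eqref{eqn:2dfull1 sd}; since $\partial_s 1=0$ this gives $\bigl(\boldsymbol{n}^{m+\frac12}\cdot(\boldsymbol{X}^{m+1}-\boldsymbol{X}^m),1\bigr)_{\Gamma^m}^h=0$. Separately, starting from the shoelace expression \eqref{fully discrete area and energy, area} and using the elementary identity $\boldsymbol{a}\cdot(\partial_\rho\boldsymbol{b})^\perp-\boldsymbol{b}\cdot(\partial_\rho\boldsymbol{a})^\perp=\partial_\rho(\boldsymbol{a}\cdot\boldsymbol{b}^\perp)$, whose integral over the periodic interval $\mathbb{T}$ vanishes, I would obtain
\[
A^{m+1}-A^m=\tfrac12\int_{\mathbb{T}}(\boldsymbol{X}^{m+1}-\boldsymbol{X}^m)\cdot(\partial_\rho\boldsymbol{X}^{m+1}+\partial_\rho\boldsymbol{X}^m)^\perp\,d\rho .
\]
Because the integrand is piecewise linear on the mesh of $\Gamma^m$, the trapezoidal (mass-lumped) rule is exact, and by the very definition of $\boldsymbol{n}^{m+\frac12}$ the right-hand side equals $\bigl((\boldsymbol{X}^{m+1}-\boldsymbol{X}^m)\cdot\boldsymbol{n}^{m+\frac12},1\bigr)_{\Gamma^m}^h=0$; hence $A^{m+1}=A^m=\cdots=A^0$.

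For energy dissipation I would take $\varphi^h=\tau\,\mu^{m+1}$ in \eqref{eqn:2dfull1 sd} and $\boldsymbol{\omega}^h=\boldsymbol{X}^{m+1}-\boldsymbol{X}^m$ in \eqref{eqn:2dfull2 sd}. The two terms carrying $\mu^{m+1}\boldsymbol{n}^{m+\frac12}$ are identical and cancel on subtraction, leaving the key identity
\[
\bigl(\boldsymbol{G}_k(\boldsymbol{n}^m)\partial_s\boldsymbol{X}^{m+1},\partial_s(\boldsymbol{X}^{m+1}-\boldsymbol{X}^m)\bigr)_{\Gamma^m}^h=-\tau\bigl(\partial_s\mu^{m+1},\partial_s\mu^{m+1}\bigr)_{\Gamma^m}^h\le 0 .
\]
It then suffices to prove the energy-stability estimate $W^{m+1}-W^m\le\bigl(\boldsymbol{G}_k(\boldsymbol{n}^m)\partial_s\boldsymbol{X}^{m+1},\partial_s(\boldsymbol{X}^{m+1}-\boldsymbol{X}^m)\bigr)_{\Gamma^m}^h$. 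Since both sides are mass-lumped sums of quantities constant on each element $I_j$, this reduces to the edge-wise inequality
\[
|\boldsymbol{h}_j^{m+1}|\,\gamma(\boldsymbol{n}_j^{m+1})-|\boldsymbol{h}_j^m|\,\gamma(\boldsymbol{n}_j^m)\le\frac{1}{|\boldsymbol{h}_j^m|}\bigl[\boldsymbol{G}_k(\boldsymbol{n}_j^m)\boldsymbol{h}_j^{m+1}\bigr]\cdot(\boldsymbol{h}_j^{m+1}-\boldsymbol{h}_j^m).
\]

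To establish this, write $\boldsymbol{e}=\boldsymbol{h}_j^m$, $\hat{\boldsymbol{e}}=\boldsymbol{h}_j^{m+1}$, $\boldsymbol{n}=\boldsymbol{n}_j^m$, $\hat{\boldsymbol{n}}=\boldsymbol{n}_j^{m+1}$, $r=|\hat{\boldsymbol{e}}|/|\boldsymbol{e}|$ and $\sigma=\hat{\boldsymbol{n}}\cdot\boldsymbol{n}^\perp$. Using $\boldsymbol{n}\cdot\boldsymbol{e}=0$, $\boldsymbol{\xi}(\boldsymbol{n})\cdot\boldsymbol{n}=\gamma(\boldsymbol{n})$, $\boldsymbol{e}\cdot\hat{\boldsymbol{e}}=|\boldsymbol{e}|\,|\hat{\boldsymbol{e}}|(\boldsymbol{n}\cdot\hat{\boldsymbol{n}})$ and $\boldsymbol{n}\cdot\hat{\boldsymbol{e}}=-|\hat{\boldsymbol{e}}|\,\sigma$, and recalling that $\boldsymbol{G}^{(a)}(\boldsymbol{n})$ contributes nothing to $[\boldsymbol{G}_k(\boldsymbol{n})\hat{\boldsymbol{e}}]\cdot\hat{\boldsymbol{e}}$, the edge-wise inequality simplifies, after dividing through by $|\hat{\boldsymbol{e}}|>0$, to $r\bigl(\gamma(\boldsymbol{n})+k(\boldsymbol{n})\sigma^2\bigr)+r^{-1}\gamma(\boldsymbol{n})\ge Q(\boldsymbol{n},\hat{\boldsymbol{n}})$, with $Q$ as in \eqref{def of F}. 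By the arithmetic--geometric mean inequality the left-hand side is at least $2\sqrt{(\gamma(\boldsymbol{n})+k(\boldsymbol{n})\sigma^2)\gamma(\boldsymbol{n})}=P_{k(\boldsymbol{n})}(\boldsymbol{n},\hat{\boldsymbol{n}})$, cf.\ \eqref{def of F_k}; and since $\alpha\mapsto P_\alpha$ is nondecreasing, the required bound $P_{k(\boldsymbol{n})}(\boldsymbol{n},\hat{\boldsymbol{n}})\ge Q(\boldsymbol{n},\hat{\boldsymbol{n}})$ for all $\hat{\boldsymbol{n}}\in\mathbb{S}^1$ follows from the choice $k(\boldsymbol{n})\ge k_0(\boldsymbol{n})$ and the definition \eqref{def of k0}. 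Combining the pieces, $W^{m+1}-W^m\le-\tau\bigl(\partial_s\mu^{m+1},\partial_s\mu^{m+1}\bigr)_{\Gamma^m}^h\le 0$, which gives $W^{m+1}\le W^m\le\cdots\le W^0$.

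The step I expect to be the main obstacle is the finiteness of $k_0(\boldsymbol{n})$, i.e.\ that $P_\alpha(\boldsymbol{n},\hat{\boldsymbol{n}})-Q(\boldsymbol{n},\hat{\boldsymbol{n}})\ge 0$ holds uniformly in $\hat{\boldsymbol{n}}$ for some finite $\alpha$; this is precisely where the condition $3\gamma(\boldsymbol{n})>\gamma(-\boldsymbol{n})$ of \eqref{es condition on gamma} is indispensable (and where the regularity $\gamma(\boldsymbol{p})\in C^2$ is used, so that $\boldsymbol{\xi}$ and hence $Q$ are continuous in $\hat{\boldsymbol{n}}$). For $\hat{\boldsymbol{n}}$ bounded away from $-\boldsymbol{n}$ one has $\sigma^2$ bounded below, so $P_\alpha\to\infty$ as $\alpha\to\infty$ while $Q$ stays bounded and large $\alpha$ wins; but at the antipode $\hat{\boldsymbol{n}}=-\boldsymbol{n}$ one has $\sigma=0$, the stabilizer drops out, and $P_\alpha(\boldsymbol{n},-\boldsymbol{n})-Q(\boldsymbol{n},-\boldsymbol{n})=3\gamma(\boldsymbol{n})-\gamma(-\boldsymbol{n})$, which is positive exactly by \eqref{es condition on gamma}; a continuity and compactness argument on a small neighbourhood of $-\boldsymbol{n}$ (where $P_0-Q$ remains positive) together with the previous observation then closes the gap and produces a finite $k_0(\boldsymbol{n})$. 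I would isolate this as a separate lemma preceding the proof; granting it, the two displayed identities above assemble into the asserted conservation and dissipation laws.
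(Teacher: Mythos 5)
Your main line of argument is correct and is essentially the paper's own proof: area conservation follows from $\varphi^h\equiv 1$ together with the identity $A^{m+1}-A^m=\bigl(\boldsymbol{n}^{m+\frac12}\cdot(\boldsymbol{X}^{m+1}-\boldsymbol{X}^m),1\bigr)_{\Gamma^m}^h$ (which the paper simply cites from the earlier SP-PFEM work, whereas you rederive it — a harmless and self-contained variant), and energy dissipation follows from the test-function choices $\varphi^h=\mu^{m+1}$, $\boldsymbol{\omega}^h=\boldsymbol{X}^{m+1}-\boldsymbol{X}^m$ plus the edge-wise estimate, which you correctly reduce to $r(\gamma(\boldsymbol{n})+k(\boldsymbol{n})\sigma^2)+r^{-1}\gamma(\boldsymbol{n})\ge Q(\boldsymbol{n},\hat{\boldsymbol{n}})$ and settle by AM--GM and $P_{k(\boldsymbol{n})}\ge P_{k_0(\boldsymbol{n})}\ge Q$. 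This is exactly Lemma~\ref{lemma: local est} and the computations \eqref{first part, Pk}--\eqref{second part, Q}; your AM--GM phrasing in the variable $r=|\hat{\boldsymbol{h}}|/|\boldsymbol{h}|$ is equivalent to the paper's completing-the-square step.

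There is, however, a genuine flaw in your sketch of the finiteness of $k_0(\boldsymbol{n})$, which you rightly isolate as the key lemma. You claim that for $\hat{\boldsymbol{n}}$ bounded away from $-\boldsymbol{n}$ the quantity $\sigma^2=(\hat{\boldsymbol{n}}\cdot\boldsymbol{n}^\perp)^2$ is bounded below, so that large $\alpha$ wins. This is false: $\sigma$ also vanishes at $\hat{\boldsymbol{n}}=+\boldsymbol{n}$, and near $+\boldsymbol{n}$ your continuity-and-compactness argument cannot close, because $P_\alpha(\boldsymbol{n},\boldsymbol{n})-Q(\boldsymbol{n},\boldsymbol{n})=0$ for \emph{every} $\alpha$; a nonstrict inequality achieved with equality at a point does not propagate to a neighbourhood by continuity. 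This is precisely why the paper splits $\mathbb{S}^1$ into $\mathbb{S}^1_{\boldsymbol{n}}$ and $\mathbb{S}^1_{-\boldsymbol{n}}$: on $\mathbb{S}^1_{\boldsymbol{n}}$ it uses the $C^2$ regularity and a second-order Taylor expansion to show $Q-2\gamma(\boldsymbol{n})\le(\tfrac{C_1}{2}-\gamma(\boldsymbol{n}))|\hat{\boldsymbol{n}}-\boldsymbol{n}|^2$ while $P_\alpha-2\gamma(\boldsymbol{n})\ge\tfrac12\sqrt{\gamma(\boldsymbol{n})(\alpha-\gamma(\boldsymbol{n}))}\,|\hat{\boldsymbol{n}}-\boldsymbol{n}|^2$, so that both sides degenerate at the same quadratic rate and an explicit finite $\alpha_1$ suffices; the open-cover argument is reserved for $\mathbb{S}^1_{-\boldsymbol{n}}$, where it does work because $P_0-Q=3\gamma(\boldsymbol{n})-\gamma(-\boldsymbol{n})>0$ is strict at $-\boldsymbol{n}$. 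So the $C^2$ hypothesis is not merely for continuity of $Q$ (which needs only continuity of $\gamma$, since $\boldsymbol{\xi}$ depends on $\boldsymbol{n}$ alone) but is essential for the quantitative comparison near $\hat{\boldsymbol{n}}=\boldsymbol{n}$. With that lemma repaired, the rest of your proof assembles into the theorem exactly as in the paper.
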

The area conservation part is a direct result of \cite[Theorem 2.1]{bao2021structurepreserving}. And the energy dissipation will be proved in the next section.

\section{Energy dissipation of the SP-PFEM \eqref{eqn:2dfull sd}}
In this section, we first show if $\gamma(\boldsymbol{n})$ satisfies \eqref{es condition on gamma}, the minimal stabilizing function $k_0(\boldsymbol{n})$ defined in \eqref{def of k0} is well-defined, thus we can always choose a nonnegative stabilizing function $k(\boldsymbol{n})\geq k_0(\boldsymbol{n})$ for $\boldsymbol{n}\in \mathbb{S}^1$. After that, we will use $k_0(\boldsymbol{n})$ to give the proof of the unconditional energy stability part of the main theorem \ref{thm: main}.

\subsection{Existence of the minimal stabilizing function $k_0(\boldsymbol{n})$ defined in \eqref{def of k0}}
From the definition of $k_0(\boldsymbol{n})$ in \eqref{def of k0}, we observe that if $(\hat{\boldsymbol{n}}\cdot \boldsymbol{n}^\perp)^2> 0$, then intuitively for sufficiently large $\alpha$, we know the $P_{\alpha}(\boldsymbol{n}, \hat{\boldsymbol{n}})\geq Q(\boldsymbol{n}, \hat{\boldsymbol{n}})$. But this approach will fail when $(\hat{\boldsymbol{n}}\cdot \boldsymbol{n}^\perp)^2=0$, and this can happen if $\hat{\boldsymbol{n}}=\pm \boldsymbol{n}$, which suggests us to treat the two cases $\hat{\boldsymbol{n}}\cdot \boldsymbol{n}\geq0$ and $\hat{\boldsymbol{n}}\cdot \boldsymbol{n}\le 0$ separately. To simplify the notations, we introduce a compact set $\mathbb{S}^1_{\boldsymbol{n}}$ as
\begin{equation}
    \mathbb{S}^1_{\boldsymbol{n}}:=\{\hat{\boldsymbol{n}}\in \mathbb{S}^1| \boldsymbol{n}\cdot \hat{\boldsymbol{n}}\geq 0\}, \quad \boldsymbol{n}\in \mathbb{S}^1.
\end{equation}
Then $\boldsymbol{n}\cdot \hat{\boldsymbol{n}}\geq 0\iff \hat{\boldsymbol{n}}\in \mathbb{S}^1_{\boldsymbol{n}}$, and $\boldsymbol{n}\cdot \hat{\boldsymbol{n}}\leq 0\iff \hat{\boldsymbol{n}}\in \mathbb{S}^1_{-\boldsymbol{n}}$.
\begin{theorem}
The $k_0(\boldsymbol{n})$ defined as \eqref{def of k0} is bounded if the condition \eqref{es condition on gamma} on $\gamma(\boldsymbol{n})$ is satisfied.
\end{theorem}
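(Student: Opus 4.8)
The plan is to prove, for each fixed $\boldsymbol{n}\in\mathbb{S}^1$, that the set in the definition \eqref{def of k0} of $k_0(\boldsymbol{n})$ is nonempty, i.e.\ that some \emph{finite} $\alpha$ makes $P_{\alpha}(\boldsymbol{n},\hat{\boldsymbol{n}})\ge Q(\boldsymbol{n},\hat{\boldsymbol{n}})$ hold for all $\hat{\boldsymbol{n}}\in\mathbb{S}^1$; then $0\le k_0(\boldsymbol{n})\le\alpha<\infty$. Following the hint before the statement, I would split $\mathbb{S}^1=\mathbb{S}^1_{\boldsymbol{n}}\cup\mathbb{S}^1_{-\boldsymbol{n}}$ into the two closed half-circles: on $\mathbb{S}^1_{\boldsymbol{n}}$ the only direction with $(\hat{\boldsymbol{n}}\cdot\boldsymbol{n}^{\perp})^{2}=0$ is $\hat{\boldsymbol{n}}=\boldsymbol{n}$, on $\mathbb{S}^1_{-\boldsymbol{n}}$ it is $\hat{\boldsymbol{n}}=-\boldsymbol{n}$, and at those two directions $P_{\alpha}(\boldsymbol{n},\hat{\boldsymbol{n}})=2\gamma(\boldsymbol{n})$ is independent of $\alpha$ --- so these are exactly the places where increasing $\alpha$ buys nothing and all the work sits. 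I would also record at the outset that applying \eqref{es condition on gamma} to $\boldsymbol{n}$ and to $-\boldsymbol{n}$ gives $\gamma(\boldsymbol{n})>0$ for all $\boldsymbol{n}\in\mathbb{S}^1$, so every square root below is meaningful.

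Away from those two directions the bound is soft. On a compact subset of $\mathbb{S}^1$ where $(\hat{\boldsymbol{n}}\cdot\boldsymbol{n}^{\perp})^{2}\ge\delta^{2}>0$ one has $P_{\alpha}(\boldsymbol{n},\hat{\boldsymbol{n}})\ge 2\delta\sqrt{\alpha\,\gamma(\boldsymbol{n})}\to\infty$ uniformly as $\alpha\to\infty$, while $Q(\boldsymbol{n},\cdot)$ is continuous hence bounded on $\mathbb{S}^1$; so $P_{\alpha}\ge Q$ there once $\alpha$ passes an explicit threshold depending only on $\delta$, $\gamma(\boldsymbol{n})$ and $\max_{\mathbb{S}^1}Q(\boldsymbol{n},\cdot)$. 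For the endpoint $\hat{\boldsymbol{n}}=-\boldsymbol{n}$ of $\mathbb{S}^1_{-\boldsymbol{n}}$ a direct evaluation gives $P_{\alpha}(\boldsymbol{n},-\boldsymbol{n})-Q(\boldsymbol{n},-\boldsymbol{n})=2\gamma(\boldsymbol{n})-\bigl(\gamma(-\boldsymbol{n})-\gamma(\boldsymbol{n})\bigr)=3\gamma(\boldsymbol{n})-\gamma(-\boldsymbol{n})$, which is $>0$ by exactly the hypothesis \eqref{es condition on gamma}; since $P_{0}\equiv 2\gamma(\boldsymbol{n})$ and $Q$ is continuous, $P_{0}-Q>0$ on a whole $\mathbb{S}^1$-neighborhood of $-\boldsymbol{n}$, so $P_{\alpha}-Q\ge P_{0}-Q>0$ there for every $\alpha\ge0$. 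Deleting that neighborhood from $\mathbb{S}^1_{-\boldsymbol{n}}$ leaves a set of the previous type, which finishes the half-circle $\mathbb{S}^1_{-\boldsymbol{n}}$.

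The real obstacle is the endpoint $\hat{\boldsymbol{n}}=\boldsymbol{n}$ of $\mathbb{S}^1_{\boldsymbol{n}}$, where $P_{\alpha}(\boldsymbol{n},\boldsymbol{n})-Q(\boldsymbol{n},\boldsymbol{n})=0$ for every $\alpha$, so the inequality is tight there and a second-order analysis --- using $\gamma(\boldsymbol{p})\in C^{2}$ --- cannot be avoided. Parametrizing $\hat{\boldsymbol{n}}(\phi)=\cos\phi\,\boldsymbol{n}+\sin\phi\,\boldsymbol{n}^{\perp}$ and using $\nabla\gamma(\boldsymbol{n})=\boldsymbol{\xi}$, $\boldsymbol{\xi}\cdot\boldsymbol{n}=\gamma(\boldsymbol{n})$ and $\hat{\boldsymbol{n}}''=-\hat{\boldsymbol{n}}$, one checks that $f(\phi):=Q(\boldsymbol{n},\hat{\boldsymbol{n}}(\phi))$ has $f(0)=2\gamma(\boldsymbol{n})$, $f'(0)=0$, $f''(0)=(\boldsymbol{n}^{\perp})^{T}\nabla^{2}\gamma(\boldsymbol{n})\,\boldsymbol{n}^{\perp}-2\gamma(\boldsymbol{n})$, whereas $P_{\alpha}^{2}(\boldsymbol{n},\hat{\boldsymbol{n}}(\phi))=4\gamma(\boldsymbol{n})^{2}+4\alpha\gamma(\boldsymbol{n})\sin^{2}\phi$ is \emph{affine} in $\alpha$. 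Since $Q>0$ near $\phi=0$, it is enough to compare $P_{\alpha}^{2}$ with $Q^{2}$: writing $Q=2\gamma(\boldsymbol{n})+R(\phi)$ with $|R(\phi)|\le C\phi^{2}$ and $R(\phi)\le\tfrac12\bigl(f''(0)+\varepsilon\bigr)\phi^{2}$ on a fixed interval $|\phi|\le\phi_{0}$ (the constants $C$ and $\phi_{0}$ coming from the continuity of $f''$, hence independent of $\alpha$) and using $\sin^{2}\phi\ge\tfrac23\phi^{2}$ there, one gets $P_{\alpha}^{2}-Q^{2}\ge\phi^{2}\bigl[\tfrac83\alpha\gamma(\boldsymbol{n})-2\gamma(\boldsymbol{n})(f''(0)+\varepsilon)-C^{2}\phi_{0}^{2}\bigr]\ge0$ once $\alpha$ is large enough. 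This step is the heart of the matter and the point needing the most care: a naive expansion of $P_{\alpha}$ itself (instead of $P_{\alpha}^{2}$) introduces an $\alpha^{2}\phi^{4}$ error that cannot be absorbed uniformly, whereas squaring leaves the troublesome term $R(\phi)^{2}$ free of $\alpha$, so $\phi_{0}$ may be fixed before $\alpha$ is chosen. The remaining set $\mathbb{S}^1_{\boldsymbol{n}}\setminus\{|\phi|\le\phi_{0}\}$ is again compact of the soft type above, finishing $\mathbb{S}^1_{\boldsymbol{n}}$.

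Taking $\alpha$ to be the maximum of the finitely many thresholds produced on $\mathbb{S}^1_{\boldsymbol{n}}$ and $\mathbb{S}^1_{-\boldsymbol{n}}$ gives $P_{\alpha}\ge Q$ on all of $\mathbb{S}^1$, hence $k_0(\boldsymbol{n})<\infty$, which is the assertion. If moreover a bound uniform in $\boldsymbol{n}$ is desired, one notes that all these thresholds depend continuously on $\boldsymbol{n}$ through $\gamma$, $\boldsymbol{\xi}$, $\nabla^{2}\gamma$ and the uniform positivity of $\gamma$, and concludes $\sup_{\boldsymbol{n}\in\mathbb{S}^1}k_0(\boldsymbol{n})<\infty$ by compactness of $\mathbb{S}^1$.
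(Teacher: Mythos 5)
Your proof is correct, and it follows the same overall strategy as the paper: split $\mathbb{S}^1$ into the half-circles $\mathbb{S}^1_{\boldsymbol{n}}$ and $\mathbb{S}^1_{-\boldsymbol{n}}$, observe that the only dangerous directions are $\hat{\boldsymbol{n}}=\pm\boldsymbol{n}$ where $P_{\alpha}$ is independent of $\alpha$, dispose of $\hat{\boldsymbol{n}}=-\boldsymbol{n}$ by the direct evaluation $P_{\alpha}(\boldsymbol{n},-\boldsymbol{n})-Q(\boldsymbol{n},-\boldsymbol{n})=3\gamma(\boldsymbol{n})-\gamma(-\boldsymbol{n})>0$ plus continuity, and use the $C^2$ regularity of $\gamma(\boldsymbol{p})$ to beat the degeneracy at $\hat{\boldsymbol{n}}=\boldsymbol{n}$. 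The execution differs in two technical respects. On $\mathbb{S}^1_{\boldsymbol{n}}$ the paper does not localize: it bounds $Q(\boldsymbol{n},\hat{\boldsymbol{n}})-2\gamma(\boldsymbol{n})\le(\tfrac{C_1}{2}-\gamma(\boldsymbol{n}))|\hat{\boldsymbol{n}}-\boldsymbol{n}|^2$ via the Lagrange form of Taylor's theorem with a global Hessian bound, and bounds $P_{\alpha}-2\gamma(\boldsymbol{n})$ from below by a multiple of $|\hat{\boldsymbol{n}}-\boldsymbol{n}|^2$ after rationalizing $P_{\alpha}-2\gamma(\boldsymbol{n})=2\alpha(\hat{\boldsymbol{n}}\cdot\boldsymbol{n}^\perp)^2/(\sqrt{1+\tfrac{\alpha}{\gamma(\boldsymbol{n})}(\hat{\boldsymbol{n}}\cdot\boldsymbol{n}^\perp)^2}+1)$ — this rationalization is exactly what sidesteps the $\alpha^2\phi^4$ obstruction you work around by comparing $P_{\alpha}^2$ with $Q^2$, and it yields an explicit threshold $\alpha_1$ valid on the whole half-circle in one stroke, whereas your argument needs a separate soft bound on $\mathbb{S}^1_{\boldsymbol{n}}\setminus\{|\phi|\le\phi_0\}$. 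On $\mathbb{S}^1_{-\boldsymbol{n}}$ the paper uses an open-cover/finite-subcover argument, while your uniform bound $P_{\alpha}\ge 2\delta\sqrt{\alpha\gamma(\boldsymbol{n})}$ on $\{(\hat{\boldsymbol{n}}\cdot\boldsymbol{n}^\perp)^2\ge\delta^2\}$ is more direct and gives an explicit threshold; both are fine. Your preliminary observation that \eqref{es condition on gamma} applied at $\boldsymbol{n}$ and $-\boldsymbol{n}$ forces $\gamma>0$ is a nice touch the paper leaves implicit, and your closing remark on $\sup_{\boldsymbol{n}}k_0(\boldsymbol{n})<\infty$ goes beyond what the theorem claims and would need a little extra care, but it is not required for the statement.
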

\begin{proof}
First we consider the case $\hat{\boldsymbol{n}}\in \mathbb{S}^1_{\boldsymbol{n}}$. Since $\gamma(\boldsymbol{n})$ satisfies \eqref{es condition on gamma}, we know $\gamma(\boldsymbol{p})\in C^2(\mathbb{R}^2\setminus \{\boldsymbol{0}\})$. Thus there exists a constant $C_1>0$, such that
\begin{equation}
    \left\|{\bf H}_{\gamma}(\boldsymbol{p})\right\|_2\leq C_1, \quad \forall \frac{1}{2}\leq |\boldsymbol{p}|^2\leq 1,
\end{equation}
here $\bf{H}_{\gamma}$ is the Hessian matrix of $\gamma(\boldsymbol{p})$, and $\left\|\cdot \right\|_2$ denotes the $2$-norm.

By mean value theorem, we know for all $\hat{\boldsymbol{n}}\in \mathbb{S}^1_{\boldsymbol{n}}$, there exists a constant $0<c<1$ such that
\begin{equation}
    \gamma(\hat{\boldsymbol{n}})=\gamma(\boldsymbol{n})+\boldsymbol{\xi}\cdot (\hat{\boldsymbol{n}}-\boldsymbol{n})+
    \frac{1}{2}(\hat{\boldsymbol{n}}-\boldsymbol{n})^T\, {\bf H}_\gamma\left(c\boldsymbol{n}+(1-c)\hat{\boldsymbol{n}}\right)\,
    (\hat{\boldsymbol{n}}-\boldsymbol{n}).
\end{equation}
It is easy to see that $|c\boldsymbol{n}+(1-c)\hat{\boldsymbol{n}}|\leq 1$ and $|c\boldsymbol{n}+(1-c)\hat{\boldsymbol{n}}|^2\geq c^2+(1-c)^2\geq \frac{1}{2}$. Hence we know
\begin{equation}\label{temp, regularity of gamma}
    \gamma(\hat{\boldsymbol{n}})\leq \gamma(\boldsymbol{n})+\boldsymbol{\xi}\cdot (\hat{\boldsymbol{n}}-\boldsymbol{n})+\frac{C_1}{2}
    |\hat{\boldsymbol{n}}-\boldsymbol{n}|^2.
\end{equation}
And notice that $\boldsymbol{\xi}\cdot \boldsymbol{n}=\gamma(\boldsymbol{n})$, we can then get the following estimate of $Q(\boldsymbol{n}, \hat{\boldsymbol{n}})$:
\begin{align}\label{Qnnh1}
&Q(\boldsymbol{n}, \hat{\boldsymbol{n}})-2\gamma(\boldsymbol{n})\nonumber\\
&\leq\left(\gamma(\boldsymbol{n})+\boldsymbol{\xi}\cdot (\hat{\boldsymbol{n}}-\boldsymbol{n})+\frac{C_1}{2}|\hat{\boldsymbol{n}}-\boldsymbol{n}|^2\right)-\Bigl(\boldsymbol{\xi}\cdot \hat{\boldsymbol{n}}-(\boldsymbol{\xi}\cdot \boldsymbol{n})(\hat{\boldsymbol{n}}\cdot \boldsymbol{n})\Bigr)\nonumber\\
&\qquad+\gamma(\boldsymbol{n})(\boldsymbol{n}\cdot \hat{\boldsymbol{n}})-2\gamma(\boldsymbol{n})\nonumber\\
&=2\gamma(\boldsymbol{n})((\boldsymbol{n}\cdot \hat{\boldsymbol{n}})-1)+\frac{C_1}{2}|\hat{\boldsymbol{n}}-\boldsymbol{n}|^2\nonumber\\
&=\left(\frac{C_1}{2}-\gamma(\boldsymbol{n})\right)|\hat{\boldsymbol{n}}-\boldsymbol{n}|^2.
\end{align}
Here we use the fact $\boldsymbol{\xi}\cdot \hat{\boldsymbol{n}}=(\boldsymbol{\xi}\cdot \boldsymbol{n})(\hat{\boldsymbol{n}}\cdot \boldsymbol{n})+(\boldsymbol{\xi}\cdot \boldsymbol{n}^\perp)(\hat{\boldsymbol{n}}\cdot \boldsymbol{n}^\perp)$ and $|\hat{\boldsymbol{n}}-\boldsymbol{n}|^2=2-2\boldsymbol{n}\cdot \hat{\boldsymbol{n}}$.

On the other hand, using the fact $(\hat{\boldsymbol{n}}\cdot \boldsymbol{n}^\perp)^2=1-(\hat{\boldsymbol{n}}\cdot \boldsymbol{n})^2=\frac{|\hat{\boldsymbol{n}}-\boldsymbol{n}|^2}{2}(1+\boldsymbol{n}\cdot \hat{\boldsymbol{n}})$, we know that for $\alpha>\gamma(\boldsymbol{n})$, it holds
\begin{align}\label{Qnnh2}
P_{\alpha}(\boldsymbol{n}, \hat{\boldsymbol{n}})-2\gamma(\boldsymbol{n})&=
\frac{2\alpha(\hat{\boldsymbol{n}}\cdot \boldsymbol{n}^\perp)^2}{\sqrt{1+
\frac{\alpha}{\gamma(\boldsymbol{n})}(\hat{\boldsymbol{n}}\cdot \boldsymbol{n}^\perp)^2}+1}\nonumber\\
&\geq \frac{2\alpha(\hat{\boldsymbol{n}}\cdot \boldsymbol{n}^\perp)^2}{2\sqrt{1+\frac{\alpha}{\gamma(\boldsymbol{n})}}}
=\frac{\alpha(1+\hat{\boldsymbol{n}}\cdot \boldsymbol{n})}{2\sqrt{1+\frac{\alpha}{\gamma(\boldsymbol{n})}}}
|\hat{\boldsymbol{n}}-\boldsymbol{n}|^2\nonumber\\
&\geq \frac{\sqrt{\gamma(\boldsymbol{n})(\alpha-\gamma(\boldsymbol{n}))}}
{2}|\hat{\boldsymbol{n}}-\boldsymbol{n}|^2.
\end{align}
Combining \eqref{Qnnh1} and \eqref{Qnnh2},
we know that for $\alpha\geq \alpha_1:= \frac{(C_1-2\gamma(\boldsymbol{n}))^2+\gamma^2(\boldsymbol{n})}{\gamma(\boldsymbol{n})}<\infty$, it holds $P_{\alpha}(\boldsymbol{n}, \hat{\boldsymbol{n}})\geq Q(\boldsymbol{n}, \hat{\boldsymbol{n}}), \forall \hat{\boldsymbol{n}}\in \mathbb{S}^1_{\boldsymbol{n}}$.

For the case $\hat{\boldsymbol{n}}\in \mathbb{S}^1_{-\boldsymbol{n}}$, by \eqref{es condition on gamma}, when $\hat{\boldsymbol{n}}=-\boldsymbol{n}$, we know that
\begin{align}
Q(\boldsymbol{n}, -\boldsymbol{n})&=\gamma(-\boldsymbol{n})+\gamma(\boldsymbol{n})
(\boldsymbol{n}\cdot (-\boldsymbol{n}))-(\boldsymbol{\xi}\cdot \boldsymbol{n}^\perp)(-\boldsymbol{n}\cdot \boldsymbol{n}^\perp)\nonumber\\
&<3\gamma(\boldsymbol{n})-\gamma(\boldsymbol{n})=2\gamma(\boldsymbol{n}).
\end{align}
Thus for $\alpha=0:=\alpha_{-\boldsymbol{n}}<\infty$, we know $P_0(\boldsymbol{n}, -\boldsymbol{n})=2\gamma(\boldsymbol{n})>Q(\boldsymbol{n}, -\boldsymbol{n})$. By continuity of $P_{\alpha}$ and $Q$, there exits an open set $\mathcal{O}_{-\boldsymbol{n}, 0}\subset \mathbb{S}^1$ such that $-\boldsymbol{n}\in \mathcal{O}_{-\boldsymbol{n}, 0}$, and for all $\hat{\boldsymbol{n}}\in \mathcal{O}_{-\boldsymbol{n}, 0}$ and $\alpha\geq 0$, we have $P_{\alpha}(\boldsymbol{n}, \hat{\boldsymbol{n}})-Q(\boldsymbol{n}, \hat{\boldsymbol{n}})\geq 0$.
For a given $\boldsymbol{p}\in \mathbb{S}^1_{-\boldsymbol{n}}, \boldsymbol{p}\neq -\boldsymbol{n}$, we know that $(\boldsymbol{p}\cdot \boldsymbol{n}^\perp)^2>0$. Therefore we can choose a sufficiently large but finite $\alpha_{\boldsymbol{p}}<\infty$, such that $P_{\alpha_{\boldsymbol{p}}}(\boldsymbol{n},\boldsymbol{p})-Q(\boldsymbol{n}, \boldsymbol{p})> 0$. And by the same argument, there exists an open set $\mathcal{O}_{\boldsymbol{p}, \alpha_{\boldsymbol{p}}}\subset \mathbb{S}^1$, such that $\boldsymbol{p}\in \mathcal{O}_{\boldsymbol{p}, \alpha_{\boldsymbol{p}}}$ and $P_{\alpha}(\boldsymbol{n}, \hat{\boldsymbol{n}})-Q(\boldsymbol{n}, \hat{\boldsymbol{n}})\geq 0, \forall \hat{\boldsymbol{n}}\in \mathcal{O}_{\boldsymbol{p}, \alpha_{\boldsymbol{p}}}, \alpha\geq \alpha_{\boldsymbol{p}}$. And we obtain an open cover for $\mathbb{S}_{-\boldsymbol{n}}^1$ as
\begin{equation}
    \mathbb{S}^1_{-\boldsymbol{n}}\subset \bigcup\limits_{\boldsymbol{p}\in \mathbb{S}^1_{-\boldsymbol{n}}} \mathcal{O}_{\boldsymbol{p}, \alpha_{\boldsymbol{p}}}.
\end{equation}
Since $\mathbb{S}^1_{-\boldsymbol{n}}$ is compact, by open cover theorem, there is a finite set of vectors $\boldsymbol{p}_1, \ldots, \boldsymbol{p}_M \in \mathbb{S}^1_{-\boldsymbol{n}}$, such that
\begin{equation}
    \mathbb{S}^1_{-\boldsymbol{n}}\subset \bigcup\limits_{i=1}^M \mathcal{O}_{\boldsymbol{p}_i, \alpha_{\boldsymbol{p}_i}}.
  \end{equation}
If we take $\alpha_2:=\max\limits_{1\leq i\leq M} \alpha_{\boldsymbol{p}_i}<\infty$, we have $P_{\alpha}(\boldsymbol{n}, \hat{\boldsymbol{n}})-Q(\boldsymbol{n}, \hat{\boldsymbol{n}})\geq 0, \forall \hat{\boldsymbol{n}}\in \mathbb{S}^1_{-\boldsymbol{n}}, \alpha\geq \alpha_2$, hence
\begin{equation}
    \infty>\max(\alpha_1, \alpha_2)\in \left\{\alpha\geq 0: P_{\alpha}(\boldsymbol{n}, \hat{\boldsymbol{n}})-Q(\boldsymbol{n}, \hat{\boldsymbol{n}})\geq 0,\ \forall \hat{\boldsymbol{n}}\in\mathbb{S}^1\right\}.
\end{equation}
Which means $k_0(\boldsymbol{n})=\inf \left\{\alpha\geq 0: P_{\alpha}(\boldsymbol{n}, \hat{\boldsymbol{n}})-Q(\boldsymbol{n}, \hat{\boldsymbol{n}})\geq 0, \forall \hat{\boldsymbol{n}}\in\mathbb{S}^1\right\}<\infty$. \qed
\end{proof}
\begin{remark}The $\hat{\boldsymbol{n}}\in \mathbb{S}^1_{\boldsymbol{n}}$ part only requires inequality \eqref{temp, regularity of gamma}, thus condition $\gamma(\boldsymbol{p})\in C^2(\mathbb{R}^2\setminus \{\boldsymbol{0}\})$ can be relaxed to $\gamma(\boldsymbol{p})$ is piecewise $C^2(\mathbb{R}^2\setminus \{\boldsymbol{0}\})$. And the condition $3\gamma(\boldsymbol{n})>\gamma(-\boldsymbol{n})$ is to ensure the existence of $\mathcal{O}_{-\boldsymbol{n}, 0}$, which suggests we may find a larger $\alpha_{-\boldsymbol{n}}>0$ such that $\mathcal{O}_{-\boldsymbol{n}, \alpha_{-\boldsymbol{n}}}$ exists for $\gamma(\boldsymbol{n})$ satisfies $3\gamma(\boldsymbol{n})\geq \gamma(-\boldsymbol{n})$. And by the same argument, we can show such $\mathcal{O}_{-\boldsymbol{n}, \alpha_{-\boldsymbol{n}}}$ exists if and only if $\boldsymbol{\xi}(\boldsymbol{n}_i)=\gamma(\boldsymbol{n}_i)\boldsymbol{n}_i \iff 3\gamma(\boldsymbol{n}_i)=\gamma(-\boldsymbol{n}_i)$.
\end{remark}

By the existence of $k_0(\boldsymbol{n})$, once the $\gamma(\boldsymbol{n})$ is given, the minimal stabilizing function $k_0(\boldsymbol{n})$ is then determined, i.e. there is a map from $\gamma(\boldsymbol{n})$ to $k_0(\boldsymbol{n})$. Similar to the proof when $\gamma(\boldsymbol{n})$ is symmetric, i.e. $\gamma(\boldsymbol{n})=\gamma(-\boldsymbol{n})$  in \cite{bao2021structurepreserving,bao2022volume}, we can show such map is a sub-linear with respect to $\gamma(\boldsymbol{n})$ when
it satisfies \eqref{es condition on gamma}.

\begin{lemma}[positive homogeneity and subadditivity]
Let $\gamma_1(\boldsymbol{n}), \gamma_2(\boldsymbol{n})$ and $\gamma_3(\boldsymbol{n})$ be three functions satisfying \eqref{es condition on gamma} with minimal stabilizing functions $k_1(\boldsymbol{n}), k_2(\boldsymbol{n})$ and  $k_3(\boldsymbol{n})$, respectively, then we have

(i) if $\gamma_2(\boldsymbol{n})=c \gamma_1(\boldsymbol{n})$, where $c>0$ is a positive number, then $k_2(\boldsymbol{n})=ck_1(\boldsymbol{n})$; and

(ii) if $\gamma_3(\boldsymbol{n})=\gamma_1(\boldsymbol{n})+\gamma_2(\boldsymbol{n})$, then $k_3(\boldsymbol{n})\leq k_1(\boldsymbol{n})+k_2(\boldsymbol{n})$.
\end{lemma}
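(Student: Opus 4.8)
The plan is to work directly from the characterization of $k_0(\boldsymbol{n})$ as an infimum in \eqref{def of k0}, exploiting the algebraic structure of the two auxiliary functions $P_\alpha$ and $Q$ under scaling and addition of $\gamma$. Throughout, I will write $P_\alpha^{(i)}$, $Q^{(i)}$, $\boldsymbol{\xi}^{(i)}$ for the quantities associated to $\gamma_i$, and use that both $\boldsymbol{\xi}(\boldsymbol{n}) = \nabla\gamma(\boldsymbol{p})|_{\boldsymbol{p}=\boldsymbol{n}}$ and $\boldsymbol{\xi}\cdot\boldsymbol{n}^\perp$ depend linearly on $\gamma$, so that $Q(\boldsymbol{n},\hat{\boldsymbol{n}})$ is linear in $\gamma$ as well. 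The key observation for $P_\alpha$ is the identity
\begin{equation}\label{P scaling}
P^{(2)}_{c\alpha}(\boldsymbol{n},\hat{\boldsymbol{n}}) = 2\sqrt{(c\gamma_1(\boldsymbol{n})+c\alpha(\hat{\boldsymbol{n}}\cdot\boldsymbol{n}^\perp)^2)\,c\gamma_1(\boldsymbol{n})} = c\,P^{(1)}_\alpha(\boldsymbol{n},\hat{\boldsymbol{n}}),
\end{equation}
which gives part (i) almost immediately.

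For part (i), fix $\boldsymbol{n}$ and write $S_i(\boldsymbol{n}) := \{\alpha\ge 0 : P^{(i)}_\alpha(\boldsymbol{n},\hat{\boldsymbol{n}}) - Q^{(i)}(\boldsymbol{n},\hat{\boldsymbol{n}}) \ge 0\ \forall \hat{\boldsymbol{n}}\in\mathbb{S}^1\}$, so $k_i(\boldsymbol{n}) = \inf S_i(\boldsymbol{n})$. Since $\gamma_2 = c\gamma_1$ we have $Q^{(2)} = cQ^{(1)}$ by linearity, and \eqref{P scaling} shows $\alpha\in S_1(\boldsymbol{n}) \iff c\alpha \in S_2(\boldsymbol{n})$; hence $S_2(\boldsymbol{n}) = cS_1(\boldsymbol{n})$ and taking infima yields $k_2(\boldsymbol{n}) = ck_1(\boldsymbol{n})$. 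I would carry this out cleanly using the previous theorem to guarantee both infima are attained over nonempty sets so there is no issue with vacuous bounds.

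For part (ii) I would take any $\alpha_1 \ge k_1(\boldsymbol{n})$ and $\alpha_2 \ge k_2(\boldsymbol{n})$, and show $\alpha_1+\alpha_2 \in S_3(\boldsymbol{n})$, which forces $k_3(\boldsymbol{n}) = \inf S_3(\boldsymbol{n}) \le \alpha_1+\alpha_2$; minimizing over $\alpha_1,\alpha_2$ then gives $k_3(\boldsymbol{n}) \le k_1(\boldsymbol{n})+k_2(\boldsymbol{n})$. By linearity $Q^{(3)} = Q^{(1)}+Q^{(2)}$, so it suffices to prove the superadditivity-type inequality
\begin{equation}\label{P superadd}
P^{(3)}_{\alpha_1+\alpha_2}(\boldsymbol{n},\hat{\boldsymbol{n}}) \ge P^{(1)}_{\alpha_1}(\boldsymbol{n},\hat{\boldsymbol{n}}) + P^{(2)}_{\alpha_2}(\boldsymbol{n},\hat{\boldsymbol{n}}), \qquad \forall \hat{\boldsymbol{n}}\in\mathbb{S}^1,
\end{equation}
since then $P^{(3)}_{\alpha_1+\alpha_2} - Q^{(3)} \ge (P^{(1)}_{\alpha_1}-Q^{(1)}) + (P^{(2)}_{\alpha_2}-Q^{(2)}) \ge 0$. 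Writing $a_i = \gamma_i(\boldsymbol{n})$, $b_i = a_i + \alpha_i(\hat{\boldsymbol{n}}\cdot\boldsymbol{n}^\perp)^2 \ge 0$, the inequality \eqref{P superadd} becomes $\sqrt{(b_1+b_2)(a_1+a_2)} \ge \sqrt{b_1a_1} + \sqrt{b_2a_2}$; squaring, this reduces to $b_1a_2 + b_2a_1 \ge 2\sqrt{b_1a_1b_2a_2}$, i.e. the AM–GM inequality $(\sqrt{b_1a_2} - \sqrt{b_2a_1})^2 \ge 0$. This is the only real computation and it is elementary.

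The main (mild) obstacle is bookkeeping rather than mathematics: one must be careful that $Q$ and the $\boldsymbol{\xi}$-vector genuinely decompose linearly under $\gamma_3 = \gamma_1 + \gamma_2$ — this follows because $\boldsymbol{\xi}^{(3)} = \nabla\gamma_3(\boldsymbol{p}) = \nabla\gamma_1(\boldsymbol{p}) + \nabla\gamma_2(\boldsymbol{p}) = \boldsymbol{\xi}^{(1)} + \boldsymbol{\xi}^{(2)}$ by linearity of the gradient, and each term of $Q$ in \eqref{def of F} is linear in the pair $(\gamma,\boldsymbol{\xi})$ — and that $\gamma_3$ indeed still satisfies \eqref{es condition on gamma} so that $k_3$ is well-defined (this is immediate: $3\gamma_3 = 3\gamma_1 + 3\gamma_2 > \gamma_1(-\boldsymbol{n}) + \gamma_2(-\boldsymbol{n}) = \gamma_3(-\boldsymbol{n})$, and $C^2$ regularity is preserved under sums). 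Once these are noted, the proof is short: part (i) is the scaling identity \eqref{P scaling} plus linearity of $Q$, and part (ii) is \eqref{P superadd}, which is AM–GM in disguise.
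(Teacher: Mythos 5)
Your proof is correct. The paper itself omits the argument (deferring to the analogous Lemma~4.4 in the symmetric case of \cite{bao2021symmetrized}), and your route is exactly the natural one that reference follows: linearity of $\boldsymbol{\xi}$ and hence of $Q$ in $\gamma$, the exact scaling identity $P^{(2)}_{c\alpha}=cP^{(1)}_{\alpha}$ for part (i), and for part (ii) the superadditivity $\sqrt{(b_1+b_2)(a_1+a_2)}\geq\sqrt{b_1a_1}+\sqrt{b_2a_2}$, which is AM--GM/Cauchy--Schwarz. The only implicit ingredients worth recording are that \eqref{es condition on gamma} forces $\gamma_i(\boldsymbol{n})>0$ (combine $3\gamma(\boldsymbol{n})>\gamma(-\boldsymbol{n})$ with $3\gamma(-\boldsymbol{n})>\gamma(\boldsymbol{n})$), so all the square roots and the AM--GM step are legitimate, and that each admissible set $S_i(\boldsymbol{n})$ equals $[k_i(\boldsymbol{n}),\infty)$ (it is an up-set since $P_\alpha$ is nondecreasing in $\alpha$, and closed by continuity), which justifies reading ``$\alpha_i\geq k_i(\boldsymbol{n})$'' as ``$\alpha_i\in S_i(\boldsymbol{n})$'' in part (ii); both are immediate.
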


The proof is similar to \cite[Lemma 4.4]{bao2021symmetrized}, and is omitted for brevity.

\subsection{Proof of the energy dissipation in
\eqref{geo properties, full, sd}}

To prove the main result, we first need the following lemma:
\begin{lemma}\label{lemma: local est}
Suppose $\boldsymbol{h}, \hat{\boldsymbol{h}}$ are two non-zero vectors in $\mathbb{R}^2$ and $\boldsymbol{n}=-\frac{\boldsymbol{h}^\perp}{|\boldsymbol{h}|}, \hat{\boldsymbol{n}}=-\frac{\hat{\boldsymbol{h}}^\perp}{|\hat{\boldsymbol{h}}|}$ to be the corresponding unit normal vectors. Then for any $k(\boldsymbol{n})\geq k_0(\boldsymbol{n})$, the following inequality holds
\begin{equation}\label{energy diff between two steps, local lemma}
    \frac{1}{|\boldsymbol{h}|}\left(\boldsymbol{G}_k(\boldsymbol{n})
    \hat{\boldsymbol{h}}\right)\cdot (\hat{\boldsymbol{h}}-\boldsymbol{h})\geq|\hat{\boldsymbol{h}}|\gamma(\hat{\boldsymbol{n}})-|\boldsymbol{h}|\,\gamma(\boldsymbol{n}).
\end{equation}
\end{lemma}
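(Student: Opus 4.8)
The plan is to reduce the matrix inequality \eqref{energy diff between two steps, local lemma} to a scalar statement about the auxiliary functions $P_\alpha$ and $Q$ introduced in \eqref{def of F_k}--\eqref{def of F}, which in turn is made true by definition once $k(\boldsymbol{n})\ge k_0(\boldsymbol{n})$. First I would expand the left-hand side using the explicit formula \eqref{def of G_k} for $\boldsymbol{G}_k(\boldsymbol{n})$, namely $\boldsymbol{G}_k(\boldsymbol{n})=\gamma(\boldsymbol{n})I_2-\boldsymbol{n}\boldsymbol{\xi}^T+\boldsymbol{\xi}\boldsymbol{n}^T+k(\boldsymbol{n})\boldsymbol{n}\boldsymbol{n}^T$. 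Write $\hat{\boldsymbol{h}}=|\hat{\boldsymbol{h}}|\hat{\boldsymbol{\tau}}$ with $\hat{\boldsymbol{\tau}}=\hat{\boldsymbol{n}}^\perp$ and $\boldsymbol{h}=|\boldsymbol{h}|\boldsymbol{\tau}$ with $\boldsymbol{\tau}=\boldsymbol{n}^\perp$, so that $\boldsymbol{n}\cdot\hat{\boldsymbol{h}}=|\hat{\boldsymbol{h}}|\,(\boldsymbol{n}\cdot\hat{\boldsymbol{n}}^\perp)=-|\hat{\boldsymbol{h}}|\,(\hat{\boldsymbol{n}}\cdot\boldsymbol{n}^\perp)$ and $\boldsymbol{n}\cdot\boldsymbol{h}=0$. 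Plugging these into $\frac{1}{|\boldsymbol{h}|}(\boldsymbol{G}_k(\boldsymbol{n})\hat{\boldsymbol{h}})\cdot(\hat{\boldsymbol{h}}-\boldsymbol{h})$ and using $\boldsymbol{\xi}\cdot\boldsymbol{n}=\gamma(\boldsymbol{n})$ and $|\hat{\boldsymbol{h}}|^2=\hat{\boldsymbol{h}}\cdot\hat{\boldsymbol{h}}$, the $k$-dependent contribution becomes $k(\boldsymbol{n})(\boldsymbol{n}\cdot\hat{\boldsymbol{h}})^2/|\boldsymbol{h}|=k(\boldsymbol{n})\frac{|\hat{\boldsymbol{h}}|^2}{|\boldsymbol{h}|}(\hat{\boldsymbol{n}}\cdot\boldsymbol{n}^\perp)^2$, while the remaining terms collect into expressions involving $\gamma(\boldsymbol{n})$, $\boldsymbol{\xi}\cdot\boldsymbol{n}^\perp$, and the inner products $\hat{\boldsymbol{n}}\cdot\boldsymbol{n}$, $\hat{\boldsymbol{n}}\cdot\boldsymbol{n}^\perp$. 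I expect, after simplification, the left-hand side to take the form $\frac{|\hat{\boldsymbol{h}}|^2}{|\boldsymbol{h}|}\bigl(\gamma(\boldsymbol{n})+k(\boldsymbol{n})(\hat{\boldsymbol{n}}\cdot\boldsymbol{n}^\perp)^2\bigr)-|\hat{\boldsymbol{h}}|\,Q(\boldsymbol{n},\hat{\boldsymbol{n}})$, where $Q$ is exactly \eqref{def of F}.

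Next I would treat $t:=|\hat{\boldsymbol{h}}|/|\boldsymbol{h}|>0$ as a free positive scalar and reduce the claim to showing
\begin{equation*}
t^2\,|\boldsymbol{h}|\bigl(\gamma(\boldsymbol{n})+k(\boldsymbol{n})(\hat{\boldsymbol{n}}\cdot\boldsymbol{n}^\perp)^2\bigr)-t\,|\boldsymbol{h}|\,Q(\boldsymbol{n},\hat{\boldsymbol{n}})\;\ge\;t\,|\boldsymbol{h}|\,\gamma(\hat{\boldsymbol{n}})-|\boldsymbol{h}|\,\gamma(\boldsymbol{n}),
\end{equation*}
i.e., after dividing by $|\boldsymbol{h}|$, that $a t^2-bt+\gamma(\boldsymbol{n})\ge 0$ for all $t>0$, where $a=\gamma(\boldsymbol{n})+k(\boldsymbol{n})(\hat{\boldsymbol{n}}\cdot\boldsymbol{n}^\perp)^2>0$ and $b=Q(\boldsymbol{n},\hat{\boldsymbol{n}})+\gamma(\hat{\boldsymbol{n}})$. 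Wait — I should be careful here: $Q(\boldsymbol{n},\hat{\boldsymbol{n}})$ already contains $\gamma(\hat{\boldsymbol{n}})$, so I will re-examine exactly which combination shows up; the point is that the right-hand side $|\hat{\boldsymbol{h}}|\gamma(\hat{\boldsymbol{n}})$ gets absorbed so that the linear coefficient in $t$ is precisely $Q(\boldsymbol{n},\hat{\boldsymbol{n}})$ as defined. In any case the quadratic $at^2-Q t+\gamma(\boldsymbol{n})\ge0$ for all real $t$ is equivalent to its discriminant being nonpositive, i.e. $Q(\boldsymbol{n},\hat{\boldsymbol{n}})^2\le 4a\gamma(\boldsymbol{n})=4\bigl(\gamma(\boldsymbol{n})+k(\boldsymbol{n})(\hat{\boldsymbol{n}}\cdot\boldsymbol{n}^\perp)^2\bigr)\gamma(\boldsymbol{n})$, which says exactly $|Q(\boldsymbol{n},\hat{\boldsymbol{n}})|\le P_{k(\boldsymbol{n})}(\boldsymbol{n},\hat{\boldsymbol{n}})$ with $P_\alpha$ as in \eqref{def of F_k}. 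Since $k(\boldsymbol{n})\ge k_0(\boldsymbol{n})$ and $P_\alpha$ is nondecreasing in $\alpha$, the definition \eqref{def of k0} of $k_0$ gives $P_{k(\boldsymbol{n})}(\boldsymbol{n},\hat{\boldsymbol{n}})\ge Q(\boldsymbol{n},\hat{\boldsymbol{n}})$; one still needs the lower bound $P_{k(\boldsymbol{n})}\ge -Q$, but $P_\alpha\ge 0$ always and one checks $Q(\boldsymbol{n},\hat{\boldsymbol{n}})+P_0(\boldsymbol{n},\hat{\boldsymbol{n}})\ge 0$ — or more directly, $Q(\boldsymbol{n},\hat{\boldsymbol{n}})\ge -2\gamma(\boldsymbol{n})=-P_0$ — from $\gamma\ge0$ and a short estimate, so $|Q|\le P_{k(\boldsymbol{n})}$ holds.

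The main obstacle I anticipate is the bookkeeping in the first step: correctly expanding $(\boldsymbol{G}_k(\boldsymbol{n})\hat{\boldsymbol{h}})\cdot(\hat{\boldsymbol{h}}-\boldsymbol{h})$ and verifying that the cross terms assemble into exactly the function $Q(\boldsymbol{n},\hat{\boldsymbol{n}})$ from \eqref{def of F} — in particular tracking the signs coming from $\boldsymbol{n}\cdot\hat{\boldsymbol{h}}=-|\hat{\boldsymbol{h}}|(\hat{\boldsymbol{n}}\cdot\boldsymbol{n}^\perp)$, the antisymmetric pair $-\boldsymbol{n}\boldsymbol{\xi}^T+\boldsymbol{\xi}\boldsymbol{n}^T$ acting on $\hat{\boldsymbol{h}}$, and the identity $\boldsymbol{\xi}=\gamma(\boldsymbol{n})\boldsymbol{n}+(\boldsymbol{\xi}\cdot\boldsymbol{\tau})\boldsymbol{\tau}$ from \eqref{xi vector}. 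Once the expression is shown to equal $t^2 a|\boldsymbol{h}| - t|\boldsymbol{h}|Q(\boldsymbol{n},\hat{\boldsymbol{n}})$ (with the $\gamma(\hat{\boldsymbol{n}})$ on the right-hand side of \eqref{energy diff between two steps, local lemma} subsumed into $Q$), everything else is the elementary discriminant argument above. I would also note for completeness that equality is attained at $t=1$ precisely when the discriminant vanishes, which is the mechanism behind sharpness of the condition \eqref{es condition on gamma}, but this remark is not needed for the proof of the inequality itself.
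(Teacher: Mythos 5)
Your proposal is correct and follows essentially the same route as the paper's proof: the paper computes $\frac{1}{|\boldsymbol{h}|}\left(\boldsymbol{G}_k(\boldsymbol{n})\hat{\boldsymbol{h}}\right)\cdot \hat{\boldsymbol{h}}=\frac{|\hat{\boldsymbol{h}}|^2}{4|\boldsymbol{h}|\gamma(\boldsymbol{n})}P^2_{k(\boldsymbol{n})}(\boldsymbol{n},\hat{\boldsymbol{n}})$ and $\frac{1}{|\boldsymbol{h}|}\left(\boldsymbol{G}_k(\boldsymbol{n})\hat{\boldsymbol{h}}\right)\cdot \boldsymbol{h}=|\hat{\boldsymbol{h}}|\left(Q(\boldsymbol{n},\hat{\boldsymbol{n}})-\gamma(\hat{\boldsymbol{n}})\right)$, then applies Young's inequality $\frac{a^2}{4c}\geq a-c$ together with $P_{k(\boldsymbol{n})}\geq Q$, which is exactly your completed-square/discriminant argument in the variable $t=|\hat{\boldsymbol{h}}|/|\boldsymbol{h}|$. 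One correction: you neither need nor can in general prove $|Q|\leq P_{k(\boldsymbol{n})}$ --- the claimed ``short estimate'' $Q(\boldsymbol{n},\hat{\boldsymbol{n}})\geq -2\gamma(\boldsymbol{n})$ may fail for strongly anisotropic $\gamma$ because the term $(\boldsymbol{\xi}\cdot\boldsymbol{n}^\perp)(\hat{\boldsymbol{n}}\cdot\boldsymbol{n}^\perp)$ is not controlled by $\gamma(\boldsymbol{n})$. Since $t>0$ and the quadratic $at^2-Qt+\gamma(\boldsymbol{n})$ with $a,\gamma(\boldsymbol{n})>0$ is automatically positive on $t>0$ whenever $Q\leq 0$, the one-sided bound $Q\leq P_{k(\boldsymbol{n})}$ supplied by the definition of $k_0$ in \eqref{def of k0} is all that is required, so simply delete that side remark.
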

\begin{proof}
By applying the definition of $\boldsymbol{G}_k(\boldsymbol{n})$ in \eqref{def of G_k} and $P_{\alpha}(\boldsymbol{n}, \hat{\boldsymbol{n}})$ in \eqref{def of F_k}, and noticing $\hat{\boldsymbol{h}}=\hat{\boldsymbol{n}}^\perp |\hat{\boldsymbol{h}}|, \boldsymbol{n}\cdot \hat{\boldsymbol{n}}^\perp=-\hat{\boldsymbol{n}}\cdot \boldsymbol{n}^\perp$, the term $\frac{1}{|\boldsymbol{h}|}\left(\boldsymbol{G}_k(\boldsymbol{n})\hat{\boldsymbol{h}}\right)\cdot \hat{\boldsymbol{h}}$ can be simplified as
\begin{align}\label{first part, Pk}
    &\frac{1}{|\boldsymbol{h}|}\left(\boldsymbol{G}_k(\boldsymbol{n})\hat{\boldsymbol{h}}\right)\cdot \hat{\boldsymbol{h}}\nonumber\\
    &=\frac{1}{|\boldsymbol{h}|}\left[\left((\gamma(\boldsymbol{n})I_2+k(\boldsymbol{n})
    \boldsymbol{n}(\boldsymbol{n})^T)+(\boldsymbol{\xi}
    (\boldsymbol{n})^T-\boldsymbol{n}(\boldsymbol{\xi})^T)
    \right)\hat{\boldsymbol{h}}\right]\cdot \hat{\boldsymbol{h}}\nonumber\\
    &=\frac{1}{|\boldsymbol{h}|}\left[\left(\gamma(\boldsymbol{n})I_2+k(\boldsymbol{n})
    \boldsymbol{n}(\boldsymbol{n})^T
    \right)\hat{\boldsymbol{h}}\right]\cdot \hat{\boldsymbol{h}}\nonumber\\
    &=\frac{1}{|\boldsymbol{h}|}\left(\gamma(\boldsymbol{n})|\hat{\boldsymbol{h}}|^2+k(\boldsymbol{n})(\boldsymbol{n}\cdot \hat{\boldsymbol{h}})^2\right)\nonumber\\
    &=\frac{1}{|\boldsymbol{h}|}\left(\gamma(\boldsymbol{n})|\hat{\boldsymbol{h}}|^2+k(\boldsymbol{n})(\hat{\boldsymbol{n}}\cdot (\boldsymbol{n})^\perp)^2|\hat{\boldsymbol{h}}|^2\right)=\frac{|\hat{\boldsymbol{h}}|^2}{4|\boldsymbol{h}|\gamma(\boldsymbol{n})}P^2_{k(\boldsymbol{n})}(\boldsymbol{n}, \hat{\boldsymbol{n}}).
    \end{align}
Similarly, by applying the definition of $\boldsymbol{G}_k(\boldsymbol{n})$ in \eqref{def of G_k} and $Q(\boldsymbol{n}, \hat{\boldsymbol{n}})$ in \eqref{def of F}, and noticing $\boldsymbol{h}=\boldsymbol{n}^\perp |\boldsymbol{h}|, \hat{\boldsymbol{h}}=\hat{\boldsymbol{n}}^\perp |\hat{\boldsymbol{h}}|,\boldsymbol{h}\cdot \hat{\boldsymbol{h}}= \boldsymbol{n}\cdot \hat{\boldsymbol{n}}|\boldsymbol{h}|\,|\hat{\boldsymbol{h}}|, \boldsymbol{n}\cdot \hat{\boldsymbol{n}}^\perp=-\hat{\boldsymbol{n}}\cdot \boldsymbol{n}^\perp$, the term $\frac{1}{|\boldsymbol{h}|}\left(\boldsymbol{G}_k(\boldsymbol{n})\hat{\boldsymbol{h}}\right)\cdot \boldsymbol{h}$ can be simplified as
\begin{align}\label{second part, Q}
    &\frac{1}{|\boldsymbol{h}|}\left(\boldsymbol{G}_k(\boldsymbol{n})\hat{\boldsymbol{h}}\right)\cdot \boldsymbol{h}=\frac{1}{|\boldsymbol{h}|}\left(\boldsymbol{G}_k^T(\boldsymbol{n})\boldsymbol{h}\right)\cdot \hat{\boldsymbol{h}}\nonumber\\
    &=\frac{1}{|\boldsymbol{h}|}\left[\left((\gamma(\boldsymbol{n})I_2+\boldsymbol{n}
    (\boldsymbol{\xi})^T)+(-\boldsymbol{\xi}(\boldsymbol{n})^T
    +k(\boldsymbol{n})\boldsymbol{n}(\boldsymbol{n})^T)\right)
    \boldsymbol{h}\right]\cdot \hat{\boldsymbol{h}}\nonumber\\
    &=\frac{1}{|\boldsymbol{h}|}\left[\left(\gamma(\boldsymbol{n})I_2+\boldsymbol{n}
    (\boldsymbol{\xi})^T\right)
    \boldsymbol{h}\right]\cdot \hat{\boldsymbol{h}}\nonumber\\
    &=\frac{1}{|\boldsymbol{h}|}\left[\gamma(\boldsymbol{n})(\boldsymbol{h}\cdot \hat{\boldsymbol{h}})+(\boldsymbol{\xi}\cdot \boldsymbol{h})(\boldsymbol{n}\cdot \hat{\boldsymbol{h}})\right]\nonumber\\
    &=|\hat{\boldsymbol{h}}|\,\left
    (\gamma(\boldsymbol{n})(\boldsymbol{n}\cdot \hat{\boldsymbol{n}})+(\boldsymbol{\xi}\cdot \boldsymbol{n}^\perp)(\boldsymbol{n}\cdot \hat{\boldsymbol{n}}^\perp)\right)\nonumber\\
    &=|\hat{\boldsymbol{h}}|\,\left
    (\gamma(\boldsymbol{n})(\boldsymbol{n}\cdot \hat{\boldsymbol{n}})-(\boldsymbol{\xi}\cdot \boldsymbol{n}^\perp)(\hat{\boldsymbol{n}}\cdot \boldsymbol{n}^\perp)\right)=|\hat{\boldsymbol{h}}|\,\left(Q(\boldsymbol{n}, \hat{\boldsymbol{n}})-\gamma(\hat{\boldsymbol{n}})\right).
    \end{align}

Finally, combining the definition of $k_0(\boldsymbol{n})$ \eqref{def of k0}, \eqref{first part, Pk}, \eqref{second part, Q}, and the fact $\frac{a^2}{4|\boldsymbol{h}|\gamma(\boldsymbol{n})}\geq -ab+|\boldsymbol{h}|\gamma(\boldsymbol{n})b^2$ yields that
\begin{align}
    \frac{1}{|\boldsymbol{h}|}\left(\boldsymbol{G}_k(\boldsymbol{n})
    \hat{\boldsymbol{h}}\right)\cdot (\hat{\boldsymbol{h}}-\boldsymbol{h})&=\frac{|\hat{\boldsymbol{h}}|^2}{4|\boldsymbol{h}|\gamma(\boldsymbol{n})}P^2_{k(\boldsymbol{n})}(\boldsymbol{n}, \hat{\boldsymbol{n}})-|\hat{\boldsymbol{h}}|\,\left(Q(\boldsymbol{n}, \hat{\boldsymbol{n}})-\gamma(\hat{\boldsymbol{n}})\right)\nonumber\\
    &\geq -|\hat{\boldsymbol{h}}|P_{k(\boldsymbol{n})}+|\boldsymbol{h}|\gamma(\boldsymbol{n})-|\hat{\boldsymbol{h}}|\,\left(P_{k(\boldsymbol{n})}(\boldsymbol{n}, \hat{\boldsymbol{n}})-\gamma(\hat{\boldsymbol{n}})\right)\nonumber\\
    &=|\hat{\boldsymbol{h}}|\gamma(\hat{\boldsymbol{n}})-|\boldsymbol{h}|\,\gamma(\boldsymbol{n}),
    \end{align}
which validates \eqref{energy diff between two steps, local lemma}.\qed
\end{proof}
Now we can prove the energy dissipation part in our main result Theorem \ref{thm: main}.
\begin{proof}
    The key point of the proof is to establish the following energy estimation
    \begin{equation}\label{energy diff between two steps}
        \Bigl( \boldsymbol{G}_k(\boldsymbol{n}^m)\partial_s \boldsymbol{X}^{m+1},\partial_s(\boldsymbol{X}^{m+1}
        -\boldsymbol{X}^m)\Bigr)_{\Gamma^m}^h\geq W^{m+1}-W^{m},\quad m\ge0.
    \end{equation}

    For any $1\leq j\leq N$, take $\boldsymbol{h}=\boldsymbol{h}_j^m, \hat{\boldsymbol{h}}=\boldsymbol{h}_j^{m+1}$ in Lemma \ref{lemma: local est}, we know that  $\boldsymbol{n}=-\frac{\boldsymbol{h}^\perp}{|\boldsymbol{h}|}=\boldsymbol{n}^m_j, \hat{\boldsymbol{n}}=\boldsymbol{n}_j^{m+1}$, and the following inequality holds
    \begin{equation}\label{energy diff between two steps, local}
    \frac{1}{|\boldsymbol{h}_j^m|}\left(\boldsymbol{G}_k(\boldsymbol{n}_j^m)
    \boldsymbol{h}_j^{m+1}\right)\cdot (\boldsymbol{h}_j^{m+1}-\boldsymbol{h}_j^m)\geq |\boldsymbol{h}_j^{m+1}|\gamma(\boldsymbol{n}_j^{m+1})-|\boldsymbol{h}_j^m|\,\gamma(\boldsymbol{n}_j^m).
    \end{equation}
    Take summation over $1\leq j\leq N$ for \eqref{energy diff between two steps, local} and notice \eqref{mass lumped inner product} and \eqref{fully discrete area and energy, energy}, we have
    \begin{align*}
    &\Bigl( \boldsymbol{G}_k(\boldsymbol{n}^m)\partial_s \boldsymbol{X}^{m+1},\partial_s(\boldsymbol{X}^{m+1}-\boldsymbol{X}^m)\Bigr)_{\Gamma^m}^h\\
    &=\sum_{j=1}^N\left[|\boldsymbol{h}_j^m|\left(\boldsymbol{G}_k(\boldsymbol{n}_j^m)\frac{\boldsymbol{h}_j^{m+1}}{|\boldsymbol{h}_j^m|}\right)\cdot \frac{\boldsymbol{h}_j^{m+1}-\boldsymbol{h}_j^m}{|\boldsymbol{h}_j^m|}\right]\\
    &=\sum_{j=1}^N\left[\frac{1}{|\boldsymbol{h}_j^m|}\left(\boldsymbol{G}_k(\boldsymbol{n}_j^m)\boldsymbol{h}_j^{m+1}\right)\cdot (\boldsymbol{h}_j^{m+1}-\boldsymbol{h}_j^m)\right]\\
    &\geq\sum_{j=1}^N\left[|\boldsymbol{h}_j^{m+1}|\gamma(\boldsymbol{n}_j^{m+1})-|\boldsymbol{h}_j^m|\,\gamma(\boldsymbol{n}_j^m)\right]\\
    &=W^{m+1}-W^m, \qquad \forall m=0, 1,\ldots
    \end{align*}
    which proves the energy estimation in \eqref{energy diff between two steps}.

    Finally, take $\varphi^h=\mu^{m+1}$ in \eqref{eqn:2dfull1 sd} and $\boldsymbol{\omega}^h=\boldsymbol{X}^{m+1}-\boldsymbol{X}^m$ in \eqref{eqn:2dfull2 sd}, we have
    \begin{align*}
        0&\geq -\tau \Bigl( \partial_s\mu^{m+1},\partial_s\mu^{m+1}\Bigr)_{\Gamma^m}^h\\
        &=\Bigl(\boldsymbol{n}^{m+\frac{1}{2}}\cdot \left(\boldsymbol{X}^{m+1}-\boldsymbol{X}^m\right),\mu^{m+1}\Bigr)_{\Gamma^m}^h\\
        &=\Bigl( \boldsymbol{G}_k(\boldsymbol{n}^m)\partial_s \boldsymbol{X}^{m+1},\partial_s(\boldsymbol{X}^{m+1}-\boldsymbol{X}^m)\Bigr)_{\Gamma^m}^h\geq W^{m+1}-W^m,
    \end{align*}
    this is true for any $m$; therefore, the energy $W^m$ decreases monotonically, and the proof is completed. \qed
\end{proof}
\begin{remark}
The condition $\gamma(\boldsymbol{p})\in C^2(\mathbb{R}^2\setminus \{\boldsymbol{0}\})$ in \eqref{es condition on gamma} is natural, but $3\gamma(\boldsymbol{n})>\gamma(-\boldsymbol{n})$ looks quite complicated and seems not very sharp. However, the proof shows the condition $3\gamma(\boldsymbol{n})>\gamma(-\boldsymbol{n})$ is indeed natural! To see this, inequality \eqref{energy diff between two steps, local lemma} in Lemma \ref{lemma: local est} is essential in showing the energy estimate \eqref{energy diff between two steps}. And if we take $\hat{\boldsymbol{h}}=-\boldsymbol{h}$ in Lemma \ref{lemma: local est}, then $\hat{\boldsymbol{n}}=-\boldsymbol{n}$, and the inequality \eqref{energy diff between two steps, local lemma} becomes
\begin{equation}
    2\gamma(\boldsymbol{n})\,|\boldsymbol{h}|\geq |\boldsymbol{h}|\,\gamma(-\boldsymbol{n})-\gamma(\boldsymbol{n})\,
    |\boldsymbol{h}|,
\end{equation}
which means $3\gamma(\boldsymbol{n})\geq \gamma(-\boldsymbol{n})$. Our sufficient energy stable condition \eqref{es condition on gamma} just replaces $\geq$ with $>$, thus is natural and \textbf{almost necessary}.
\end{remark}
\section{Extension to other anisotropic geometric flows}
In fact, the energy stable condition on $\gamma(\boldsymbol{n})$ in
\eqref{es condition on gamma}, the definition of $\boldsymbol{G}_k(\boldsymbol{n})$ in \eqref{def of G_k}, the alternative expression for $\mu$ in \eqref{alt def of mu}, and the definition of $k_0(\boldsymbol{n})$ in \eqref{def of k0} are independent of the anisotropic surface diffusion flow. Thus these definitions and even the proof of energy stability can be directly extended to other anisotropic geometric flows.

\subsection{For anisotropic curvature flow}
Similar to \eqref{eqn:conserve continuous sd}, for the anisotropic curvature flow in \eqref{eqn: aniso geo flow}, we have a conservative geometric PDE as
\begin{subequations}
\label{eqn:conserve continuous mc}
\begin{align}
\label{eqn:conserve1 continuous mc}
&\boldsymbol{n}\cdot \partial_t \boldsymbol{X}=-\mu,\\[0.5em]
\label{eqn:conserve2 continuous mc}
&\mu\boldsymbol{n}=-\partial_s(\boldsymbol{G}_k(\boldsymbol{n})\partial_s \boldsymbol{X}).
\end{align}
\end{subequations}

Suppose the initial curve $\boldsymbol{X}(\cdot, 0)=(x(\cdot, 0), y(\cdot, 0))^T:=\Gamma_0\in [H^1(\mathbb{T})]^2$ and the initial weighted curvature $\mu(\cdot, 0):=\mu_0(\cdot)\in H^1(\mathbb{T})$. Based on the conservative form \eqref{eqn:conserve continuous mc}, the variational formulation for anisotropic curvature flow is as follows: For any $t>0$, find the solution $(\boldsymbol{X}(\cdot, t), \mu(\cdot, t))\in [H^1(\mathbb{T})]^2\times H^1(\mathbb{T})$ satisfying
\begin{subequations}
\label{eqn:weak continuous mc}
\begin{align}
\label{eqn:weak1 continuous mc}
&\Bigl(\boldsymbol{n}\cdot\partial_{t}\boldsymbol{X},
\varphi\Bigr)_{\Gamma(t)}+\Bigl(\mu,
\varphi\Bigr)_{\Gamma(t)}=0,\qquad\forall \varphi\in H^1(\mathbb{T}),\\[0.5em]
\label{eqn:weak2 continuous mc}
&\Bigl(\mu\boldsymbol{n},\boldsymbol{\omega}\Bigr)_{\Gamma(t)}-\Bigl( \boldsymbol{G}_k(\boldsymbol{n})\partial_s \boldsymbol{X},\partial_s\boldsymbol{\omega}\Bigr)_{\Gamma(t)}= 0,\quad\forall\boldsymbol{\omega}\in [H^1(\mathbb{T})]^2.
\end{align}
\end{subequations}
And the SP-PFEM for the anisotropic curvature flow
\eqref{eqn:weak continuous mc} is as follows: Suppose the initial approximation $\Gamma^0(\cdot)\in [\mathbb{K}^h]^2$ is given by $\boldsymbol{X}^0(\rho_j)=\boldsymbol{X}_0(\rho_j), \forall 0\leq j\leq N$, then for any $m=0, 1, 2, \ldots$, find the solution $(\boldsymbol{X}^m(\cdot), \mu^m(\cdot))\in [\mathbb{K}^h]^2\times \mathbb{K}^h$, such that
\begin{subequations}
\label{eqn:2dfull mc}
\begin{align}
\label{eqn:2dfull1 mc}
&\Bigl(\boldsymbol{n}^{m+\frac{1}{2}}\cdot\frac{\boldsymbol{X}^{m+1}-\boldsymbol{X}^m}{\tau},
\varphi^h\Bigr)_{\Gamma^m}^h+\Bigl(\mu^{m+1},
\varphi^h\Bigr)_{\Gamma^m}^h=0,\quad\forall \varphi^h\in \mathbb{K}^h,\\
\label{eqn:2dfull2 mc}
&\Bigl(\mu^{m+1}\boldsymbol{n}^{m+\frac{1}{2}},\boldsymbol{\omega}^h\Bigr)_{\Gamma^m}^h-\Bigl( \boldsymbol{G}_k(\boldsymbol{n}^m)\partial_s \boldsymbol{X}^{m+1},\partial_s\boldsymbol{\omega}^h\Bigr)_{\Gamma^m}^h=0,
\quad\forall\boldsymbol{\omega}^h\in[\mathbb{K}^h]^2.
\end{align}
\end{subequations}

    For the SP-PFEM \eqref{eqn:2dfull mc}, we have

\begin{theorem}[structure-preserving]\label{coro: main mc} Suppose $\gamma(\boldsymbol{n})$ satisfies \eqref{es condition on gamma} and
take a stabilizing function $k(\boldsymbol{n})\geq k_0(\boldsymbol{n})$,
then the SP-PFEM \eqref{eqn:2dfull mc} preserves area decay rate and energy dissipation, i.e.,
\begin{equation}\label{geo properties, full, mc}
    \frac{A^{m+1}-A^m}{\tau}=-\Bigl(\mu^{m+1},1\Bigr)_{\Gamma^m}^h, \quad W^{m+1}\leq W^m\leq \ldots\le  W^0, \qquad \forall m\geq 0.
    \end{equation}
\end{theorem}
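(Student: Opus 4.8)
The plan is to transcribe, almost verbatim, the proof of Theorem \ref{thm: main}, exploiting the fact—already emphasized at the start of this section—that Lemma \ref{lemma: local est} and the key energy estimate \eqref{energy diff between two steps} depend only on the surface energy matrix $\boldsymbol{G}_k(\boldsymbol{n})$ and not on which geometric flow is being discretized; only the first equation of the scheme differs between \eqref{eqn:2dfull sd} and \eqref{eqn:2dfull mc}. I would first dispose of the area identity, then the energy decay.

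For the area part, I would take $\varphi^h\equiv 1$ in \eqref{eqn:2dfull1 mc}, which immediately gives
\[
\Bigl(\boldsymbol{n}^{m+\frac{1}{2}}\cdot\tfrac{\boldsymbol{X}^{m+1}-\boldsymbol{X}^m}{\tau},1\Bigr)_{\Gamma^m}^h=-\Bigl(\mu^{m+1},1\Bigr)_{\Gamma^m}^h .
\]
Then I would invoke the purely geometric identity for the clever normal $\boldsymbol{n}^{m+\frac{1}{2}}$ from \cite[Theorem 2.1]{bao2021structurepreserving}, namely $\bigl(\boldsymbol{n}^{m+\frac{1}{2}}\cdot(\boldsymbol{X}^{m+1}-\boldsymbol{X}^m),1\bigr)_{\Gamma^m}^h=A^{m+1}-A^m$, which yields $\frac{A^{m+1}-A^m}{\tau}=-\bigl(\mu^{m+1},1\bigr)_{\Gamma^m}^h$, the first claim in \eqref{geo properties, full, mc}. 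This step uses nothing about the right-hand side of the flow.

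For the energy part, I would test \eqref{eqn:2dfull1 mc} with $\varphi^h=\mu^{m+1}$ and \eqref{eqn:2dfull2 mc} with $\boldsymbol{\omega}^h=\boldsymbol{X}^{m+1}-\boldsymbol{X}^m$. The mixed term $\bigl(\mu^{m+1}\boldsymbol{n}^{m+\frac{1}{2}},\boldsymbol{X}^{m+1}-\boldsymbol{X}^m\bigr)_{\Gamma^m}^h$ appears in both and is eliminated, leaving
\[
\Bigl(\boldsymbol{G}_k(\boldsymbol{n}^m)\partial_s\boldsymbol{X}^{m+1},\partial_s(\boldsymbol{X}^{m+1}-\boldsymbol{X}^m)\Bigr)_{\Gamma^m}^h=-\tau\Bigl(\mu^{m+1},\mu^{m+1}\Bigr)_{\Gamma^m}^h\le 0 .
\]
Now I apply the energy estimate \eqref{energy diff between two steps}—which holds here verbatim, since under hypothesis \eqref{es condition on gamma} the minimal stabilizing function $k_0(\boldsymbol{n})$ is finite and the choice $k(\boldsymbol{n})\ge k_0(\boldsymbol{n})$ makes Lemma \ref{lemma: local est} applicable edge-by-edge with $\boldsymbol{h}=\boldsymbol{h}_j^m$, $\hat{\boldsymbol{h}}=\boldsymbol{h}_j^{m+1}$. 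This gives $\bigl(\boldsymbol{G}_k(\boldsymbol{n}^m)\partial_s\boldsymbol{X}^{m+1},\partial_s(\boldsymbol{X}^{m+1}-\boldsymbol{X}^m)\bigr)_{\Gamma^m}^h\ge W^{m+1}-W^m$, so $W^{m+1}\le W^m$; iterating over $m$ yields the chain $W^{m+1}\le W^m\le\cdots\le W^0$.

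There is no genuine obstacle here: the proof is a direct adaptation, and the only thing demanding attention—rather than work—is confirming that the two imported results apply unchanged, namely that the $\boldsymbol{n}^{m+\frac{1}{2}}$ area identity is flow-agnostic and that condition \eqref{es condition on gamma} is exactly what is needed to keep Lemma \ref{lemma: local est} and estimate \eqref{energy diff between two steps} available. Hence the proof reduces to the two short test-function manipulations above.
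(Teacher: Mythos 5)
Your proposal is correct and follows essentially the same route as the paper's own proof: the area identity comes from testing \eqref{eqn:2dfull1 mc} with $\varphi^h\equiv 1$ combined with the geometric identity for $\boldsymbol{n}^{m+\frac{1}{2}}$ from \cite[Theorem 2.1]{bao2021structurepreserving}, and the energy decay comes from the test functions $\varphi^h=\mu^{m+1}$, $\boldsymbol{\omega}^h=\boldsymbol{X}^{m+1}-\boldsymbol{X}^m$ together with the flow-independent estimate \eqref{energy diff between two steps}. Nothing is missing.
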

\begin{proof}
    From \cite[Theorem 2.1]{bao2021structurepreserving}, we know that
    \begin{equation}\label{area difference}
        A^{m+1}-A^m=\Bigl(\boldsymbol{n}^{m+\frac{1}{2}}\cdot(\boldsymbol{X}^{m+1}-\boldsymbol{X}^m),1\Bigr)_{\Gamma^m}^h.
    \end{equation}
    Thus by taking $\varphi^h\equiv 1\in \mathbb{K}^h$  in  \eqref{eqn:2dfull1 mc}, we know that
    \begin{equation}
        \frac{A^{m+1}-A^m}{\tau}=\Bigl(\boldsymbol{n}^{m+\frac{1}{2}}\cdot\frac{\boldsymbol{X}^{m+1}-\boldsymbol{X}^m}{\tau},1\Bigr)_{\Gamma^m}^h=-\Bigl(\mu^{m+1},1\Bigr)_{\Gamma^m}^h,
    \end{equation}
    which is the desired decay rate in \eqref{geo properties, full, mc}.

    For energy dissipation, we have already known that \eqref{energy diff between two steps} is true. Taking  $\varphi^h=\mu^{m+1}$ in \eqref{eqn:2dfull1 mc} and $\boldsymbol{\omega}^h=\boldsymbol{X}^{m+1}-\boldsymbol{X}^m$ in \eqref{eqn:2dfull2 mc}, we know that
    \begin{align*}
        0&\geq -\tau \Bigl(\mu^{m+1},\mu^{m+1}\Bigr)_{\Gamma^m}^h\\
        &=\Bigl( \boldsymbol{G}_k(\boldsymbol{n}^m)\partial_s \boldsymbol{X}^{m+1},\partial_s(\boldsymbol{X}^{m+1}-\boldsymbol{X}^m)\Bigr)_{\Gamma^m}^h\geq W^{m+1}-W^m.
    \end{align*}
    Hence we complete the proof. \qed
\end{proof}

\subsection{For area-conserved anisotropic curvature flow}
Similarly, for the area-conserved anisotropic curvature flow in
\eqref{eqn: aniso geo flow}, the conservative geometric PDE is given as
\begin{subequations}
\label{eqn:conserve continuous acmc}
\begin{align}
\label{eqn:conserve1 continuous acmc}
&\boldsymbol{n}\cdot \partial_t \boldsymbol{X}=-\mu+\lambda(t),\\[0.5em]
\label{eqn:conserve2 continuous acmc}
&\mu\boldsymbol{n}=-\partial_s(\boldsymbol{G}_k(\boldsymbol{n})\partial_s \boldsymbol{X}),
\end{align}
\end{subequations}
where $\lambda(t)$ is given as \eqref{lambdcon} by replacing
$\lambda$ and $\Gamma$ by $\lambda(t)$ and $\Gamma(t)$, respectively.
And the variational formulation can be derived in a similar way.

In order to design a structure-preserving full discretization, we need to properly discretize $\lambda(t)$. Denote $\lambda^{m+\frac{1}{2}}$ with respect to $\Gamma^m$ as
\begin{equation}
    \lambda^{m+\frac{1}{2}}:=\frac{\left(\mu^{m+1}, 1\right)_{\Gamma^m}^h}{\left(1, 1\right)_{\Gamma^m}^h}.
\end{equation}
By adopting this $\lambda^{m+\frac{1}{2}}$, the SP-PFEM for the
 area-conserved anisotropic curvature flow in \eqref{eqn: aniso geo flow} is as follows: Suppose the initial approximation $\Gamma^0(\cdot)\in [\mathbb{K}^h]^2$ is given by $\boldsymbol{X}^0(\rho_j)=\boldsymbol{X}_0(\rho_j), \forall 0\leq j\leq N$; for any $m=0, 1, 2, \ldots$, find the solution $(\boldsymbol{X}^m(\cdot), \mu^m(\cdot))\in [\mathbb{K}^h]^2\times \mathbb{K}^h$, such that
\begin{subequations}
\label{eqn:2dfull acmc}
\begin{align}
\label{eqn:2dfull1 acmc}
&\Bigl(\boldsymbol{n}^{m}\cdot\frac{\boldsymbol{X}^{m+1}-\boldsymbol{X}^m}{\tau},
\varphi^h\Bigr)_{\Gamma^m}^h+\Bigl(\mu^{m+1}-\lambda^{m+\frac{1}{2}},
\varphi^h\Bigr)_{\Gamma^m}^h=0,\quad\forall \varphi^h\in \mathbb{K}^h,\\
\label{eqn:2dfull2 acmc}
&\Bigl(\mu^{m+1}\boldsymbol{n}^{m},\boldsymbol{\omega}^h\Bigr)_{\Gamma^m}^h-\Bigl( \boldsymbol{G}_k(\boldsymbol{n}^m)\partial_s \boldsymbol{X}^{m+1},\partial_s\boldsymbol{\omega}^h\Bigr)_{\Gamma^m}^h=0,
\quad\forall\boldsymbol{\omega}^h\in[\mathbb{K}^h]^2.
\end{align}
\end{subequations}

For the above SP-PFEM \eqref{eqn:2dfull acmc}, we have

\begin{theorem}[structure-preserving]\label{coro: main acmc} Suppose $\gamma(\boldsymbol{n})$ satisfies \eqref{es condition on gamma} and
take a finite stabilizing function $k(\boldsymbol{n})\geq k_0(\boldsymbol{n})$, then the SP-PFEM \eqref{eqn:2dfull mc} is
structure-preserving, i.e.,
\begin{equation}\label{geo properties, full, acmc}
A^{m+1}\equiv A^0, \quad W^{m+1}\leq W^m\leq \ldots\le W^0, \qquad \forall m\geq 0.
\end{equation}
\end{theorem}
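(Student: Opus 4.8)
The plan is to mirror the two-part structure of the proofs of Theorems~\ref{thm: main} and~\ref{coro: main mc}: derive $A^{m+1}\equiv A^0$ from the exact discrete area identity of \cite[Theorem~2.1]{bao2021structurepreserving} combined with the choice of $\lambda^{m+\frac{1}{2}}$, and derive the energy decay by testing the two equations of \eqref{eqn:2dfull acmc} against $\mu^{m+1}$ and $\boldsymbol{X}^{m+1}-\boldsymbol{X}^m$ and invoking the flow-independent estimate \eqref{energy diff between two steps}.

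For area conservation, I would first recall that, for the midpoint normal $\boldsymbol{n}^{m+\frac{1}{2}}$, \cite[Theorem~2.1]{bao2021structurepreserving} gives
\begin{equation*}
A^{m+1}-A^m=\Bigl(\boldsymbol{n}^{m+\frac{1}{2}}\cdot(\boldsymbol{X}^{m+1}-\boldsymbol{X}^m),\,1\Bigr)_{\Gamma^m}^h .
\end{equation*}
Then I would take $\varphi^h\equiv 1\in\mathbb{K}^h$ in \eqref{eqn:2dfull1 acmc}; since the very definition of $\lambda^{m+\frac{1}{2}}$ makes $\bigl(\mu^{m+1}-\lambda^{m+\frac{1}{2}},1\bigr)_{\Gamma^m}^h=\bigl(\mu^{m+1},1\bigr)_{\Gamma^m}^h-\lambda^{m+\frac{1}{2}}\bigl(1,1\bigr)_{\Gamma^m}^h$ vanish, this forces $\bigl(\boldsymbol{n}^{m+\frac{1}{2}}\cdot(\boldsymbol{X}^{m+1}-\boldsymbol{X}^m),1\bigr)_{\Gamma^m}^h=0$, hence $A^{m+1}=A^m$ and, by induction, $A^{m+1}\equiv A^0$.

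For the energy decay, I would use that the estimate \eqref{energy diff between two steps}, namely
\begin{equation*}
\Bigl(\boldsymbol{G}_k(\boldsymbol{n}^m)\partial_s\boldsymbol{X}^{m+1},\,\partial_s(\boldsymbol{X}^{m+1}-\boldsymbol{X}^m)\Bigr)_{\Gamma^m}^h\ge W^{m+1}-W^m ,
\end{equation*}
is already established and, being a consequence of Lemma~\ref{lemma: local est} which only involves $\gamma(\boldsymbol{n})$, $\boldsymbol{G}_k$ and $k(\boldsymbol{n})\ge k_0(\boldsymbol{n})$, carries over verbatim. I would then take $\varphi^h=\mu^{m+1}$ in \eqref{eqn:2dfull1 acmc} and $\boldsymbol{\omega}^h=\boldsymbol{X}^{m+1}-\boldsymbol{X}^m$ in \eqref{eqn:2dfull2 acmc}: the term $\bigl(\mu^{m+1}\boldsymbol{n}^{m+\frac{1}{2}},\boldsymbol{X}^{m+1}-\boldsymbol{X}^m\bigr)_{\Gamma^m}^h$ equals both $\bigl(\boldsymbol{G}_k(\boldsymbol{n}^m)\partial_s\boldsymbol{X}^{m+1},\partial_s(\boldsymbol{X}^{m+1}-\boldsymbol{X}^m)\bigr)_{\Gamma^m}^h$ (from the second equation) and $-\tau\bigl(\mu^{m+1}-\lambda^{m+\frac{1}{2}},\mu^{m+1}\bigr)_{\Gamma^m}^h$ (from the first). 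The new point, not needed for the earlier two flows, is the nonpositivity of the latter: because $\bigl(\mu^{m+1}-\lambda^{m+\frac{1}{2}},1\bigr)_{\Gamma^m}^h=0$ and $\lambda^{m+\frac{1}{2}}$ is a constant,
\begin{equation*}
\bigl(\mu^{m+1}-\lambda^{m+\frac{1}{2}},\,\mu^{m+1}\bigr)_{\Gamma^m}^h
=\bigl(\mu^{m+1}-\lambda^{m+\frac{1}{2}},\,\mu^{m+1}-\lambda^{m+\frac{1}{2}}\bigr)_{\Gamma^m}^h\ge 0 ,
\end{equation*}
a discrete variance of $\mu^{m+1}$ in the mass-lumped inner product on $\Gamma^m$ (equivalently, the Cauchy--Schwarz inequality in $(\cdot,\cdot)_{\Gamma^m}^h$). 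Chaining these relations gives $W^{m+1}-W^m\le-\tau\bigl(\mu^{m+1}-\lambda^{m+\frac{1}{2}},\mu^{m+1}\bigr)_{\Gamma^m}^h\le 0$, and telescoping over $m$ gives $W^{m+1}\le W^m\le\cdots\le W^0$.

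I expect the only real obstacle to be this last sign check: one has to spot that the chosen discrete multiplier $\lambda^{m+\frac{1}{2}}$ is exactly the $\Gamma^m$-weighted discrete mean of $\mu^{m+1}$ with respect to the very inner product used in the scheme, so that $\bigl(\mu^{m+1}-\lambda^{m+\frac{1}{2}},\mu^{m+1}\bigr)_{\Gamma^m}^h$ collapses to a nonnegative discrete variance rather than an indefinite quantity; and, relatedly, that the same midpoint normal $\boldsymbol{n}^{m+\frac{1}{2}}$ should appear in both equations of \eqref{eqn:2dfull acmc} (as in \eqref{eqn:2dfull sd} and \eqref{eqn:2dfull mc}) so that the exact area identity and the energy chaining are simultaneously available. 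All remaining steps are a transcription of the proofs of Theorems~\ref{thm: main} and~\ref{coro: main mc}.
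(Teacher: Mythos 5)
Your proposal is correct and follows essentially the same route as the paper: area conservation via the identity of \cite[Theorem~2.1]{bao2021structurepreserving} with $\varphi^h\equiv 1$ and the definition of $\lambda^{m+\frac12}$, and energy decay by chaining the test functions $\mu^{m+1}$ and $\boldsymbol{X}^{m+1}-\boldsymbol{X}^m$ with the flow-independent estimate \eqref{energy diff between two steps}; your nonnegative-variance identity for $\bigl(\mu^{m+1}-\lambda^{m+\frac12},\mu^{m+1}\bigr)_{\Gamma^m}^h$ is exactly the Cauchy--Schwarz step of the paper written in orthogonal-decomposition form. Your side remark that the midpoint normal $\boldsymbol{n}^{m+\frac12}$ must appear in \eqref{eqn:2dfull acmc} is well taken: the displayed scheme writes $\boldsymbol{n}^{m}$, yet the paper's own proof invokes the $\boldsymbol{n}^{m+\frac12}$ area identity, so your reading agrees with the proof rather than with the (apparently mistyped) display.
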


\begin{proof}
For the area conservation,
taking $\varphi^h\equiv 1$ in \eqref{eqn:2dfull1 acmc} yields that
\begin{align*}
    \Bigl(\boldsymbol{n}^{m+\frac{1}{2}}\cdot(\boldsymbol{X}^{m+1}-\boldsymbol{X}^m),1\Bigr)_{\Gamma^m}^h&=-\tau \Bigl(\mu^{m+1}-\lambda^{m+\frac{1}{2}},1\Bigr)_{\Gamma^m}^h\\
    &=-\tau \Bigl(\mu^{m+1},1\Bigr)_{\Gamma^m}^h+\tau \lambda^{m+\frac{1}{2}}\Bigl(1, 1\Bigr)_{\Gamma^m}^h \\
    &=-\tau \Bigl(\mu^{m+1},1\Bigr)_{\Gamma^m}^h+\tau \frac{\left(\mu^{m+1}, 1\right)_{\Gamma^m}^h}{\left(1, 1\right)_{\Gamma^m}^h}\Bigl(1, 1\Bigr)_{\Gamma^m}^h\\
    &=0,\qquad m\ge0.
\end{align*}
By noting \eqref{geo properties, full, mc}, we deduce that $A^{m+1}-A^m=0$, which shows area conservation \eqref{fully discrete area and energy, area}.

For energy dissipation, by Cauchy-Schwarz inequality, we have
\begin{align*}
    \Bigl(\lambda^{m+\frac{1}{2}},\mu^{m+1}\Bigr)_{\Gamma^m}^h&=\lambda^{m+\frac{1}{2}}\Bigl(1,\mu^{m+1}\Bigr)_{\Gamma^m}^h\\
    &=\frac{1}{\left(1, 1\right)_{\Gamma^m}^h}\left(\Bigl(1,\mu^{m+1}\Bigr)_{\Gamma^m}^h\right)^2\\
    &\leq \frac{1}{\left(1, 1\right)_{\Gamma^m}^h}
    \Bigl(1,1\Bigr)_{\Gamma^m}^h\, \Bigl(\mu^{m+1},\mu^{m+1}\Bigr)_{\Gamma^m}^h\\
    &=\Bigl(\mu^{m+1},\mu^{m+1}\Bigr)_{\Gamma^m}^h,\qquad m\ge0.
\end{align*}
Taking $\varphi^h=\mu^{m+1}$ in \eqref{eqn:2dfull1 acmc} and $\boldsymbol{\omega}^h=\boldsymbol{X}^{m+1}-\boldsymbol{X}^m$ in \eqref{eqn:2dfull2 acmc}, and adopting the energy estimation \eqref{energy diff between two steps} yields that
\begin{equation}
    W^{m+1}-W^m\leq -\tau\Bigl(\mu^{m+1}-\lambda^{m+\frac{1}{2}},
    \mu^{m+1}\Bigr)_{\Gamma^m}^h\leq 0,\qquad m\ge0,
\end{equation}
which implies the energy dissipation in
\eqref{geo properties, full, acmc}. \qed
\end{proof}

\section{Numerical results}
In this section, we present numerical experiments to illustrate the high performance of the proposed SP-PFEMs. The implementations and performances of the three SP-PFEMs are very similar. Thus in section 5.1, we only show test results of the SP-PFEM \eqref{eqn:2dfull sd} for the anisotropic surface diffusion. The morphological evolutions for three anisotropic geometric flows are shown in section 5.2.

To compute the minimal stabilizing function $k_0(\boldsymbol{n})$, we first solve the optimization problem \eqref{def of k0} for $20$ uniformly distributed points $\boldsymbol{n}_j\in \mathbb{S}^1$, and then do linear interpolation for the intermediate points. In Newton's iteration, the tolerance value $\varepsilon$ is set as $10^{-12}$.

\subsection{Results for the anisotropic surface diffusion}Here we provide convergence tests to show the quadratic convergence rate in space and linear convergence rate in time. To this end, the time step $\tau$ is always chosen as $\tau=h^2$ except it is state otherwise. The distance between two closed curves $\Gamma_1, \Gamma_2$ is given by the manifold distance $M(\Gamma_1, \Gamma_2)$ in \cite{bao2020energy} as
\begin{equation}\label{manifold distance}
    M(\Gamma_1, \Gamma_2):=2|\Omega_1\cup \Omega_2|-|\Omega_1|-|\Omega_2|,
\end{equation}
where $\Omega_1, \Omega_2$ are the interior regions of $\Gamma_1, \Gamma_2$, respectively, and $|\Omega|$ denotes the area of $\Omega$. Let $\Gamma^m$ be the numerical approximation of $\Gamma^h(t=t_m:=m\tau)$ with mesh size $h$ and time step $\tau$, the numerical error is thus given as
\begin{equation}
    e^h(t)\Big|_{t=t_m}:=M(\Gamma^m, \Gamma(t_m)).
\end{equation}

\begin{figure}[t!]
\centering
\includegraphics[width=0.5\textwidth]{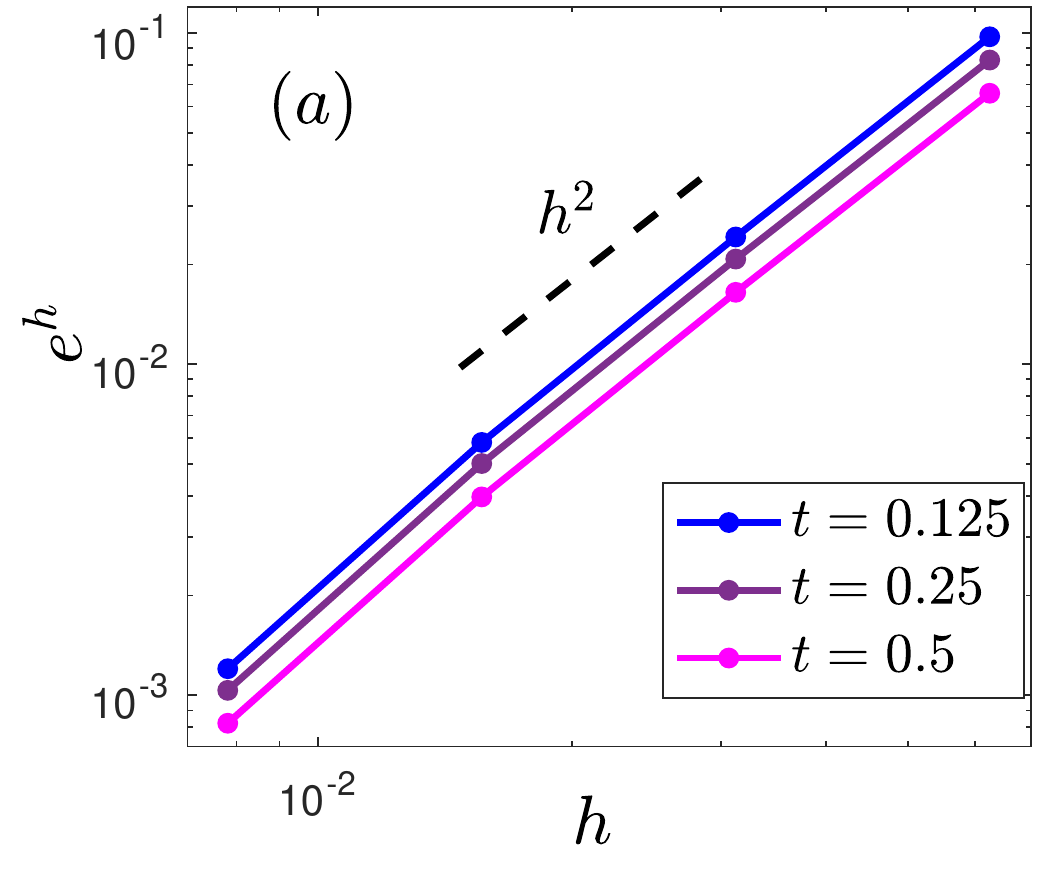}\includegraphics[width=0.5\textwidth]{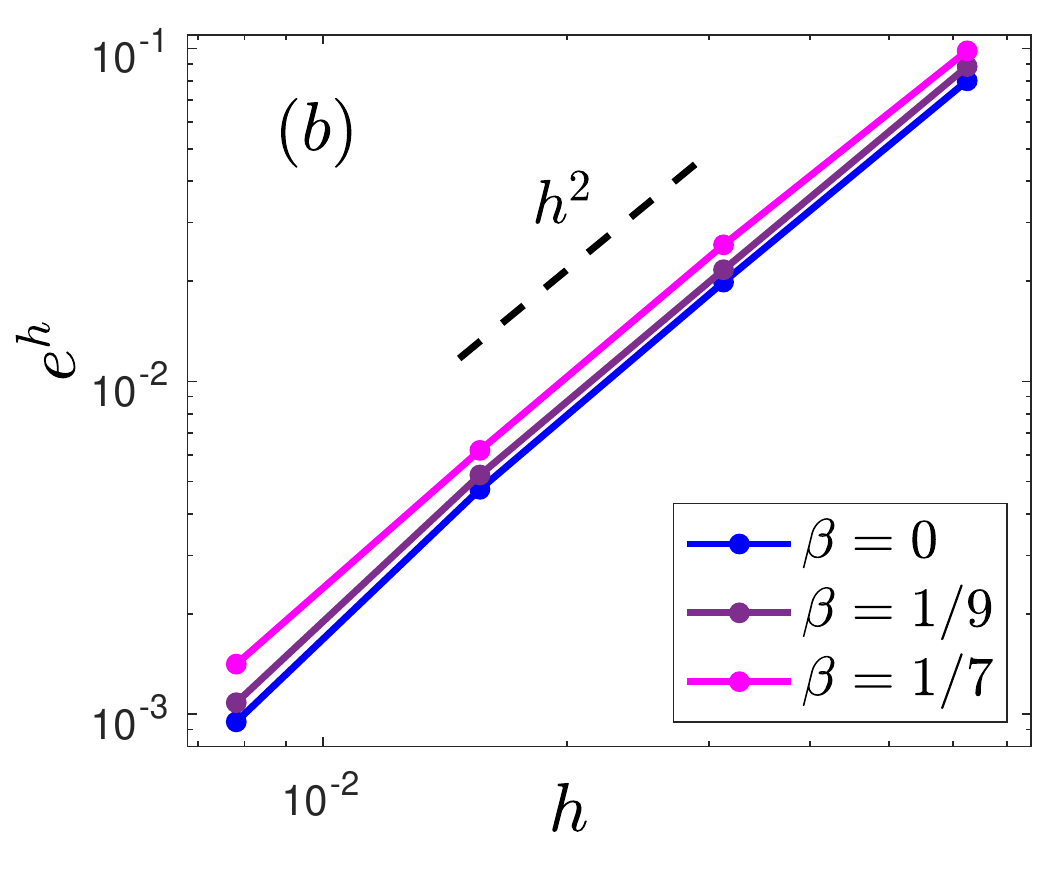}
\caption{Convergence rates of the SP-PFEM \eqref{eqn:2dfull sd} with $k(\boldsymbol{n})=k_0(\boldsymbol{n})$ for: (a) anisotropy in Case I at different times $t=0.125, 0.25, 0.5$; and (b) anisotropy in Case II at time $t=0.5$ with different $\beta$.}
\label{fig: convergent}
\end{figure}

To numerically test the energy stability, area conservation and good mesh quality, we introduce the following indicators: the normalized energy $\frac{W^h(t)}{W^h(0)}\Big|_{t=t_m}:=\frac{W^m}{W^0}$, the normalized area loss and the weighted mesh ratio
\begin{equation}
    \frac{\Delta A^h(t)}{A^h(0)}\Big|_{t=t_m}:=\frac{A^m-A^0}{A^0}, \qquad R^h_{\gamma}(t)\Big|_{t=t_m}:=\frac{\max\limits_{1\leq j\leq N} |\boldsymbol{h}_j^m|\,\gamma(\boldsymbol{n}_j^m)}{\min\limits_{1\leq j\leq N} |\boldsymbol{h}_j^m|\, \gamma(\boldsymbol{n}_j^m)}.
\end{equation}

In the following numerical tests, the initial curve $\Gamma_0$ is given as an ellipse with length $4$ and width $1$. The exact solution $\Gamma(t)$ is approximated by choosing $k(\boldsymbol{n})=k_0(\boldsymbol{n})$ with $h=2^{-8}$ and $\tau=2^{-16}$ in \eqref{eqn:2dfull sd}. Here are two typical anisotropic surface energies to be taken in our simulations:
\begin{itemize}
\item Case I: $\gamma(\boldsymbol{n})=\sqrt{\left(\frac{5}{2}+\frac{3}{2}\text{sgn}(n_1) \right)n_1^2+n_2^2}$ \cite{deckelnick2005computation};
\item Case II: $\gamma(\boldsymbol{n})=1+\beta \cos(3\theta) $ with $\boldsymbol{n}=(\sin\theta, -\cos\theta)^T$ and $|\beta|<1$ \cite{jiang2016solid}. It is weakly anisotropic when $|\beta|<\frac{1}{8}$, and otherwise it is strongly anisotropic.
    \end{itemize}

Fig. \ref{fig: convergent} presents the convergence rates of the proposed SP-PFEM \eqref{eqn:2dfull sd} at different times and with different anisotropic strengths $\beta$ under a fixed time $t=0.5$. It is apparent from this figure that the second-order convergence in space is independent of anisotropies and computation times, which indicates the convergence rate is very robust.

The time evolution of the normalized energy $\frac{W^h(t)}{W^h(0)}$ with different $h$, the normalized area loss $\frac{\Delta A^h(t)}{A^h(0)}$ and the number of Newton iterations with $h=2^{-7}$, and the weighted mesh quality $R^h_{\gamma}(t)$ with different $h$ are summarised in Figs.
\ref{fig: energy}-\ref{fig: mesh ratio}, respectively.

It can be seen from Figs. \ref{fig: energy}-\ref{fig: mesh ratio} that

(i) The normalized energy is monotonically decreasing when the surface energy satisfies the energy stable condition \eqref{es condition on gamma} (cf. Fig. \ref{fig: energy});

(ii) The normalized area loss is at $10^{-15}$ which is almost at the same order as the round-off error (cf. Fig. \ref{fig: energy}),
which confirms the area conservation in practical simulatoions;

(iii) Interestingly, the numbers of iterations in Newton's method are initially $2$ and finally $1$ (cf. Fig. \ref{fig: volume}). This finding suggests that although the proposed SP-PFEM \eqref{eqn:2dfull sd} is
full-implicit, but it can be solved very efficiently with a few iterations;

(iv) In Fig. \ref{fig: mesh ratio} there is a clear trend of convergence of the weighted mesh ratio $R_{\gamma}^h$. Moreover, in contrast to the symmetrized SP-PFEM for symmetric $\gamma(\boldsymbol{n})$ in \cite{bao2021symmetrized}, $R^h_{\gamma}$ keeps small even with the strongly anisotropy $\gamma(\boldsymbol{n})=1+\frac{1}{3}\cos 3\theta$.

\begin{figure}[htp!]
\centering
\includegraphics[width=1\textwidth]{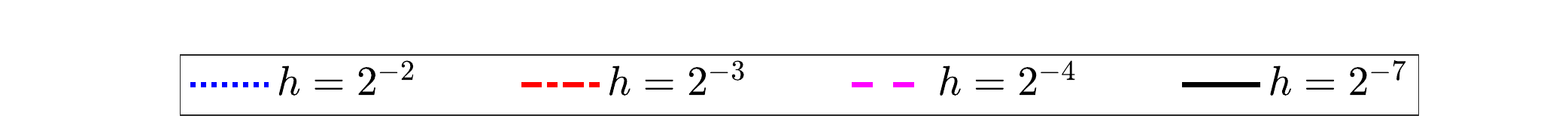}
\includegraphics[width=0.5\textwidth]{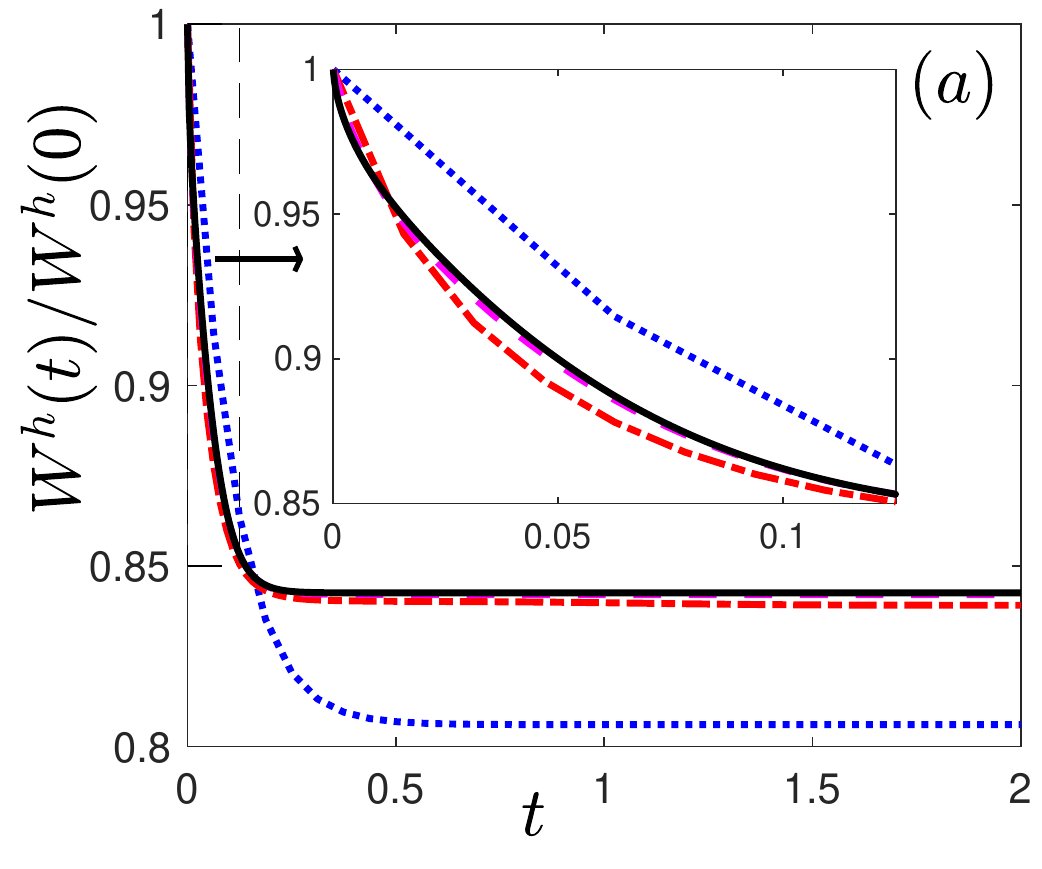}\includegraphics[width=0.5\textwidth]{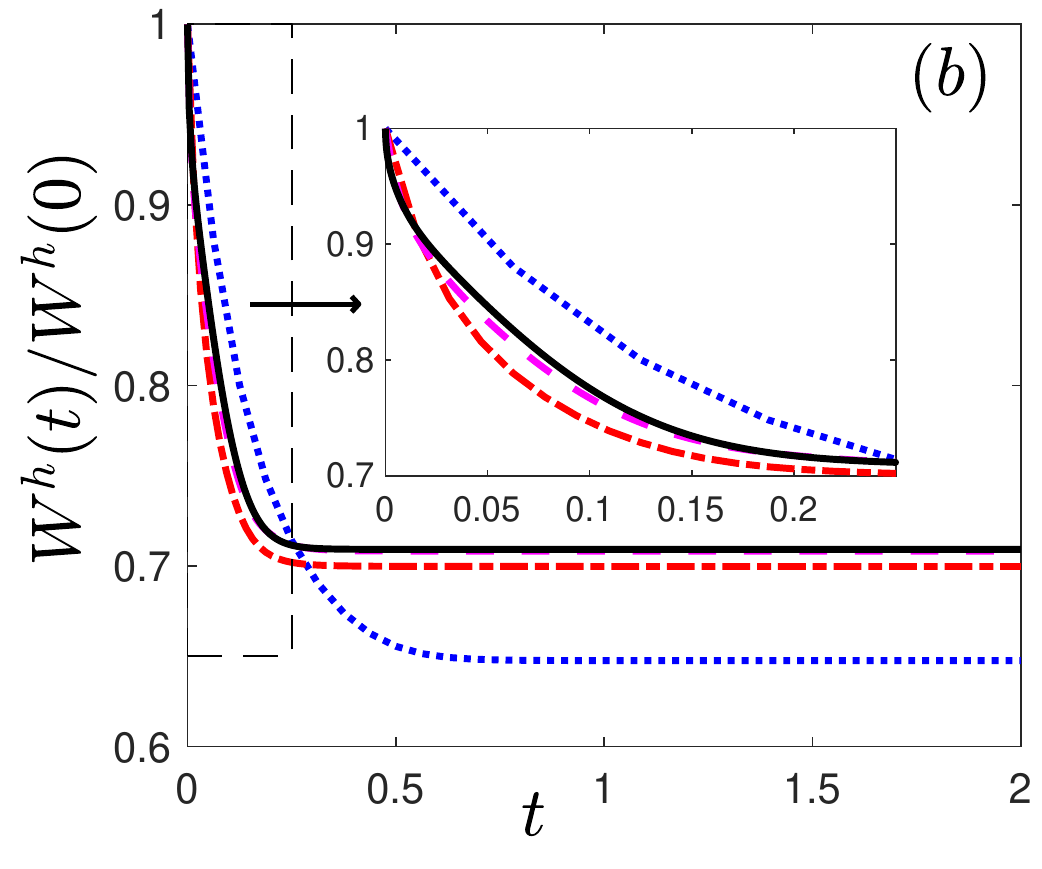}
\caption{Normalized energy of the SP-PFEM \eqref{eqn:2dfull sd} with $k(\boldsymbol{n})=k_0(\boldsymbol{n})$ and different $h$ for: (a) anisotropy in Case I; (b) anisotropy in Case II with $\beta=\frac{1}{3}$.}
\label{fig: energy}
\end{figure}
\begin{figure}[htp!]
\centering
\includegraphics[width=0.5\textwidth]{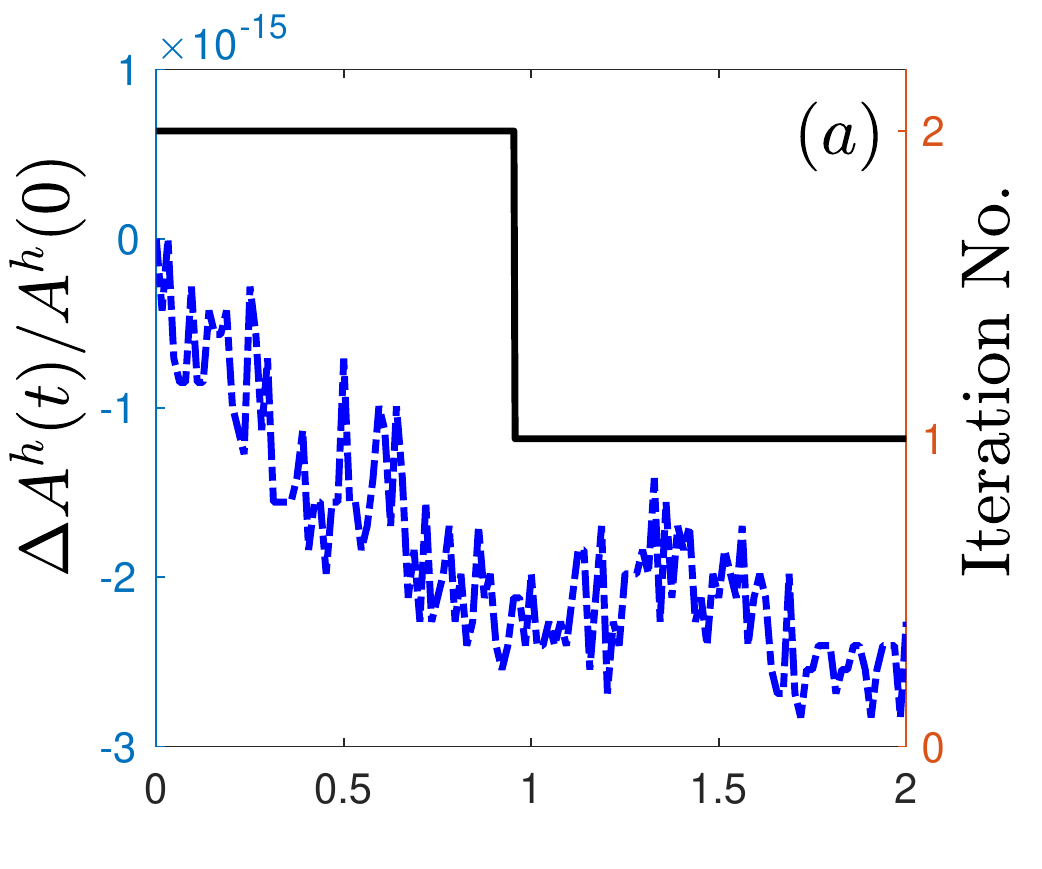}\includegraphics[width=0.5\textwidth]{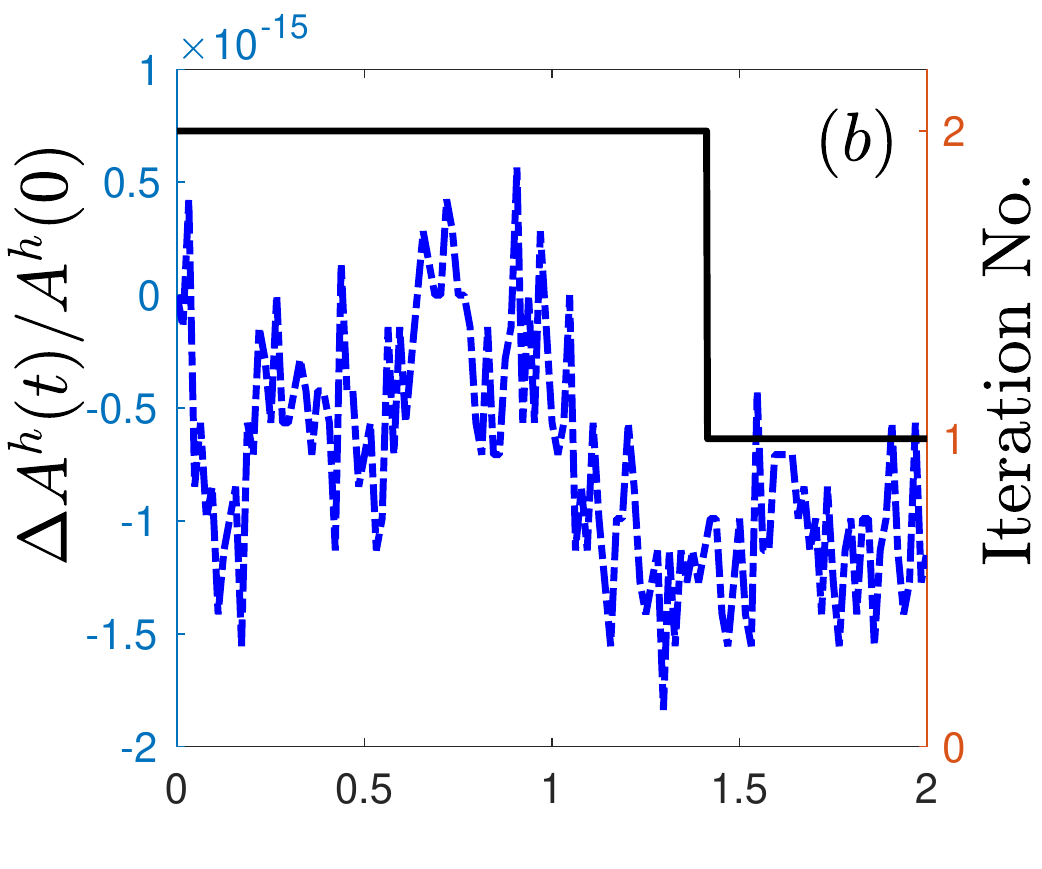}
\caption{Normalized area loss (blue dashed line) and iteration number (black line) of the SP-PFEM \eqref{eqn:2dfull sd} with $k(\boldsymbol{n})=k_0(\boldsymbol{n})$ and $h=2^{-7}, \tau=h^2$ for: (a) anisotropy in Case I; (b) anisotropy in Case II with $\beta=\frac{1}{3}$.}
\label{fig: volume}
\end{figure}
\begin{figure}[htp!]
\centering
\includegraphics[width=0.5\textwidth]{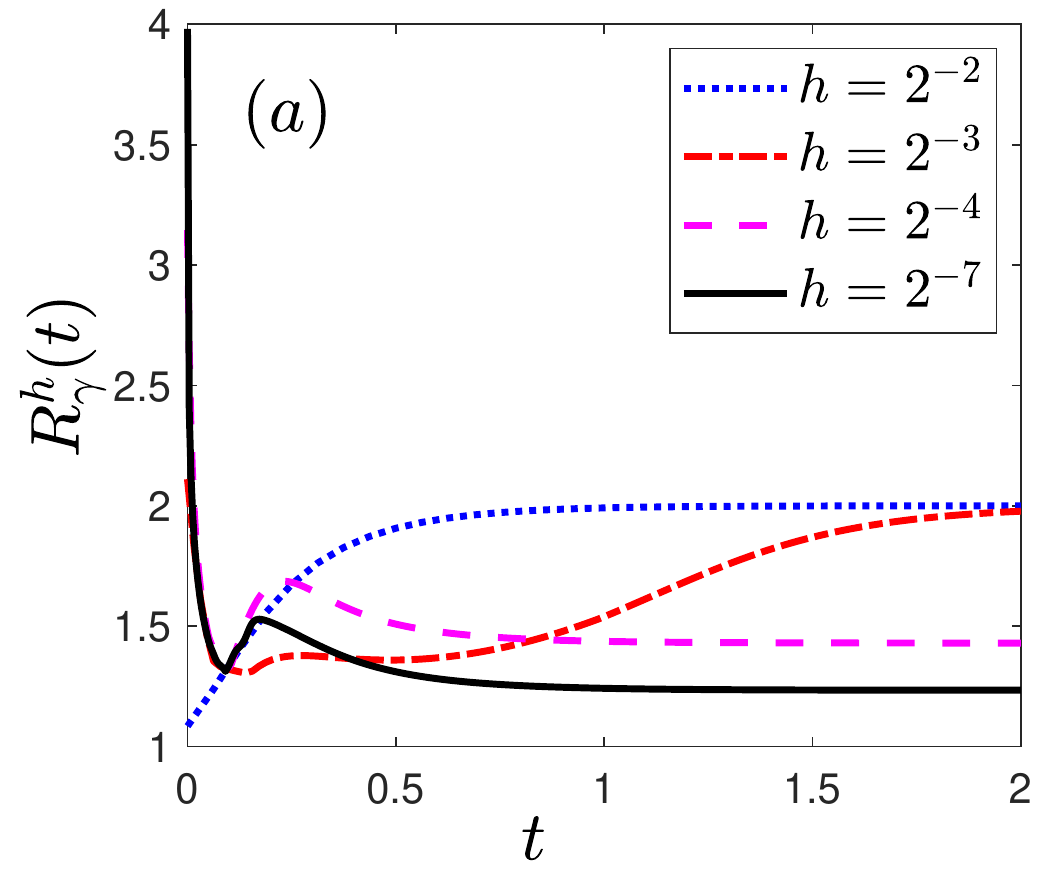}\includegraphics[width=0.5\textwidth]{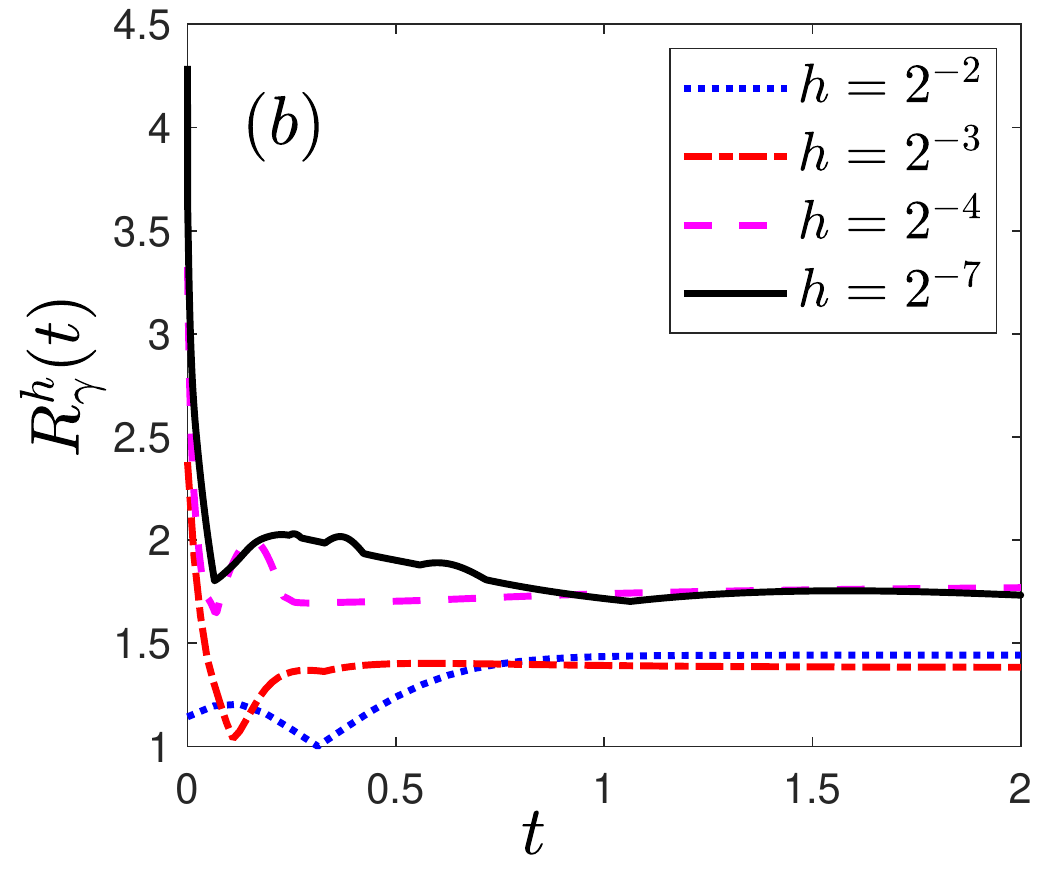}
\caption{Weighted mesh ratio of the SP-PFEM \eqref{eqn:2dfull sd} with $k(\boldsymbol{n})=k_0(\boldsymbol{n})$ and different $h$ for: (a) anisotropy in Case I; (b) anisotropy in Case II with $\beta=\frac{1}{3}$.}
\label{fig: mesh ratio}
\end{figure}
\subsection{Application for morphological evolutions}Finally, we apply the proposed SP-PFEMs to simulate the morphological evolutions driven by the three anisotropic geometric flows. Fig \ref{fig: evolve} plots the morphological evolutions of anisotropic surface diffusion for the four different anisotropic energies: (a) anisotropy in case I; (b)-(d) anisotropies in case II  with $\beta=1/9, 1/7, 1/3$, respectively. Fig \ref{fig: evolve full} and Fig \ref{fig: evolve full2} depict the anisotropic surface diffusion,
area-conserved anisotropic curvature flow, and anisotropic curvature flow at different times with anisotropy in case I and in case II with $\beta=1/3$, respectively.

\begin{figure}[htp!]
\centering
\includegraphics[width=1\textwidth]{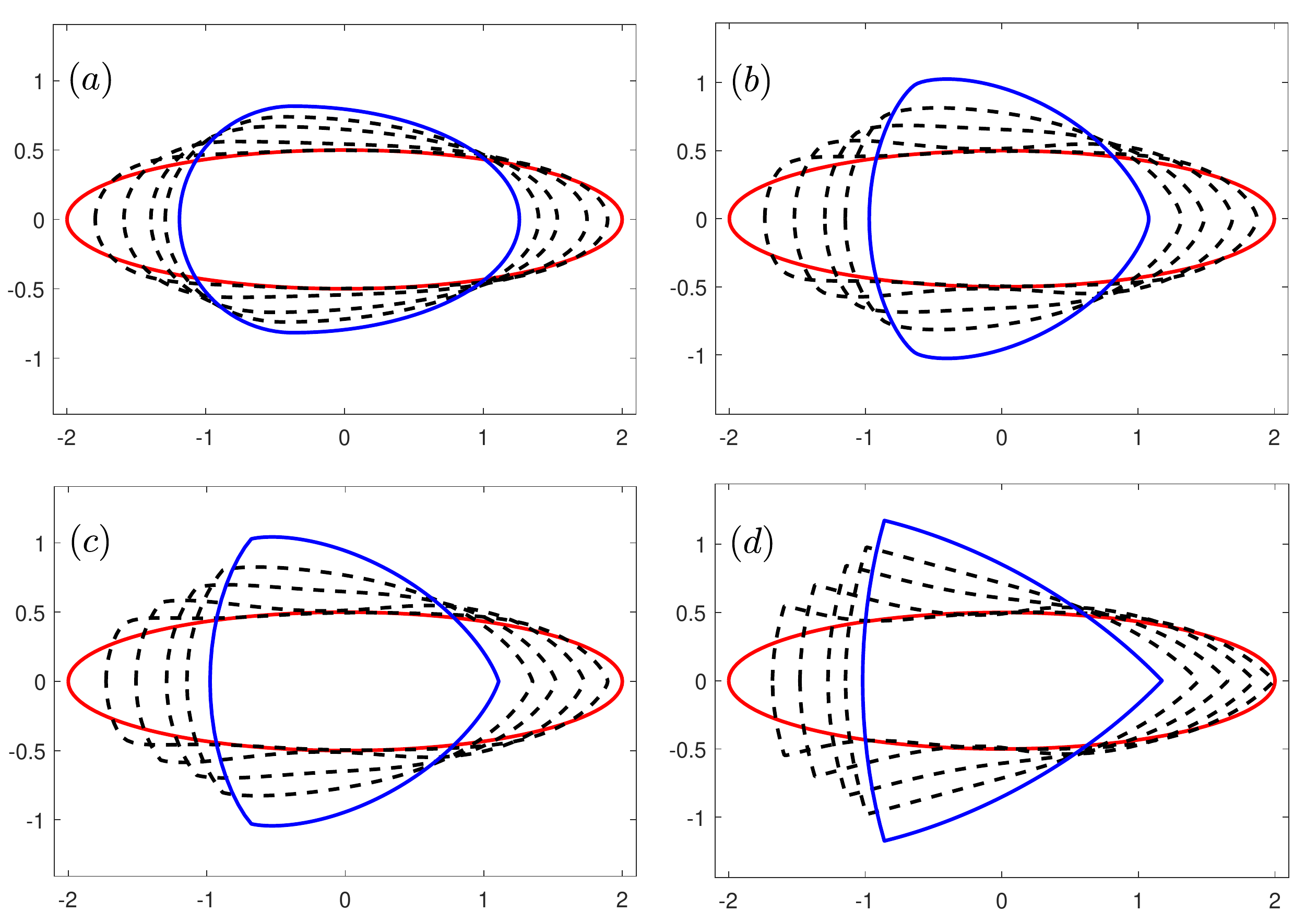}
\caption{Morphological evolutions of a $4\times 1$ ellipse under anisotropic surface diffusion with four different anisotropic energies: (a) anisotropy in Case I; (b)-(d) anisotropies in case II  with $\beta=1/9, 1/7, 1/3$, respectively. The red and blue lines represent the initial curve and the numerical equilibrium, respectively; and the black dashed lines represent the intermediate curves. The mesh size and time step are taken as $h=2^{-7}, \tau=h^2$.}
\label{fig: evolve}
\end{figure}

As shown in Fig. \ref{fig: evolve} (b)-(d), the edges emerge during the evolution and corners become sharper as the strength $\beta$ increases. In contrast, there are no edges or corners in the morphological evolutions with anisotropy in Case I. This suggests that even if it is not a $C^2$-function, it is more like weak anisotropy! From Fig. \ref{fig: evolve full}-\ref{fig: evolve full2}, we can see that the anisotropic surface diffusion and the area-conserved anisotropic curvature flow have the same equilibriums in shapes, while they have different dynamics, i.e., the equilibriums are different in positions, and the anisotropic surface diffusion evolves faster than the area-conserved anisotropic curvature flow.

\begin{figure}[htp!]
\centering
\includegraphics[width=0.33\textwidth]{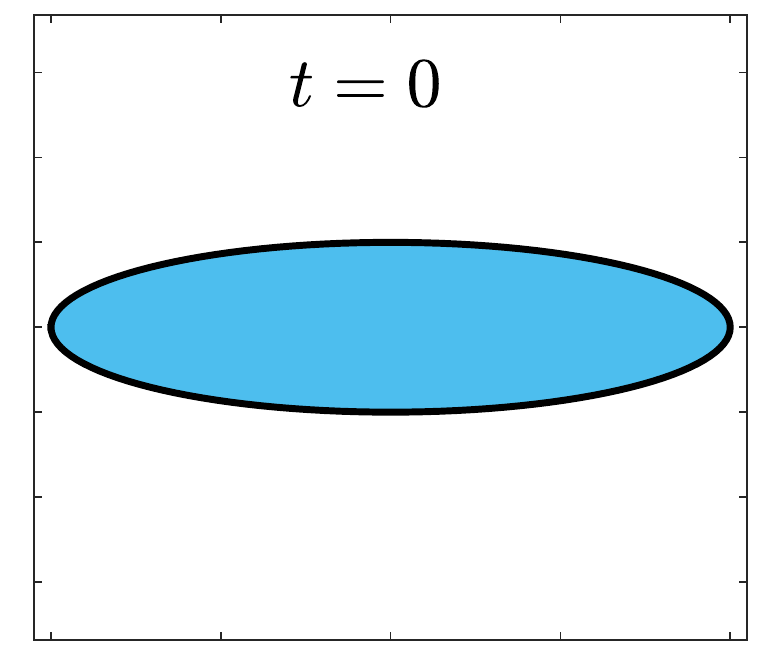}\includegraphics[width=0.33\textwidth]{figures/evolvenew/evolve1/a1.pdf}\includegraphics[width=0.33\textwidth]{figures/evolvenew/evolve1/a1.pdf}
\includegraphics[width=0.33\textwidth]{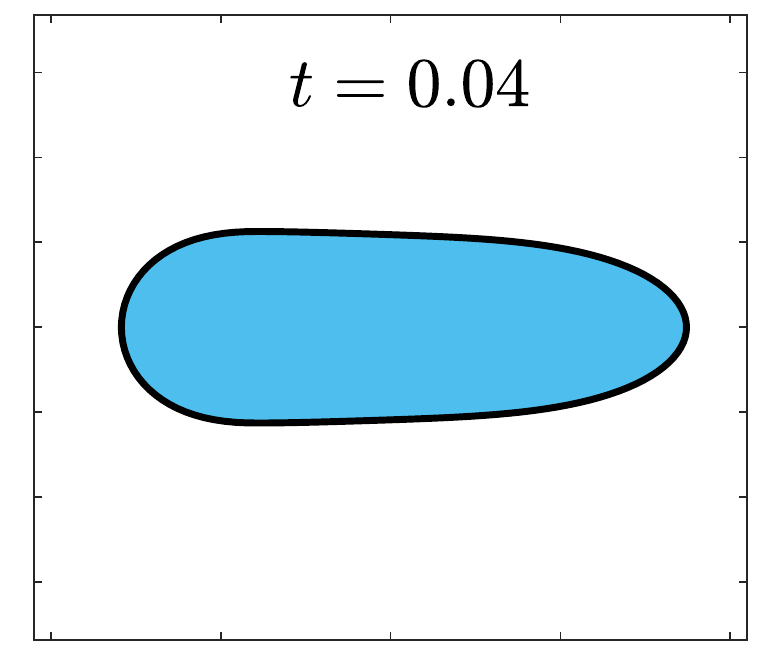}\includegraphics[width=0.33\textwidth]{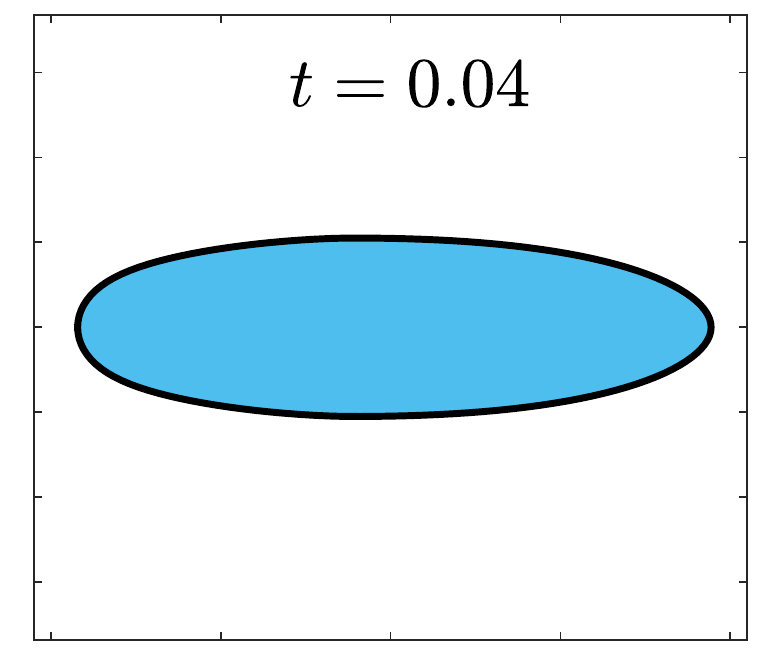}\includegraphics[width=0.33\textwidth]{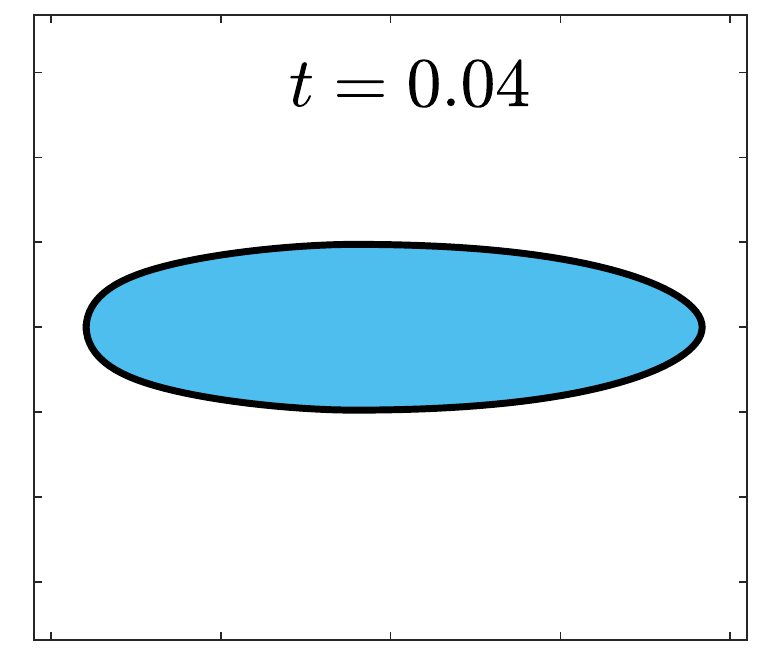}
\includegraphics[width=0.33\textwidth]{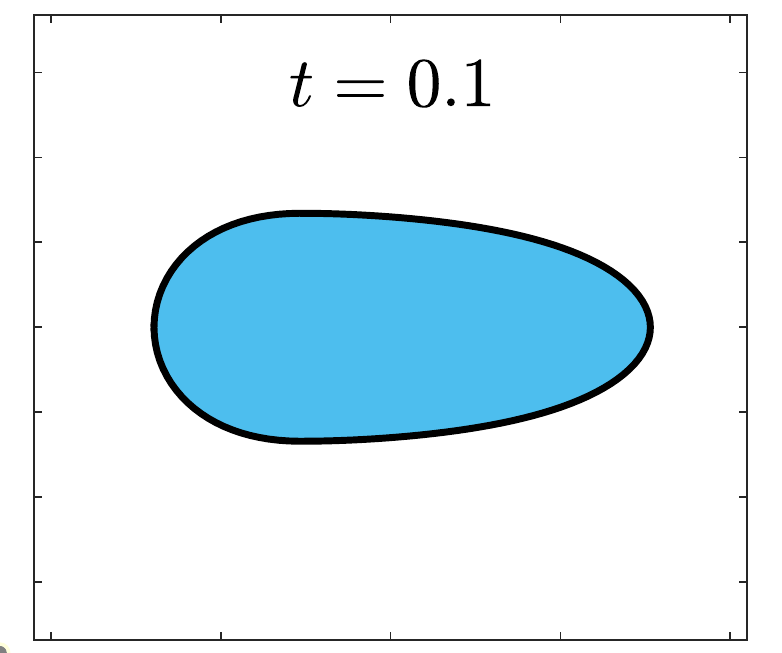}\includegraphics[width=0.33\textwidth]{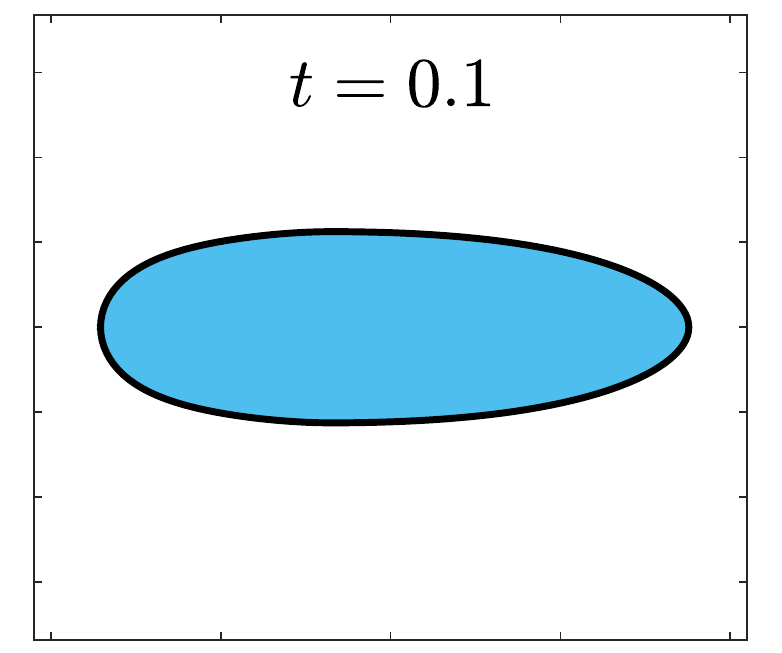}\includegraphics[width=0.33\textwidth]{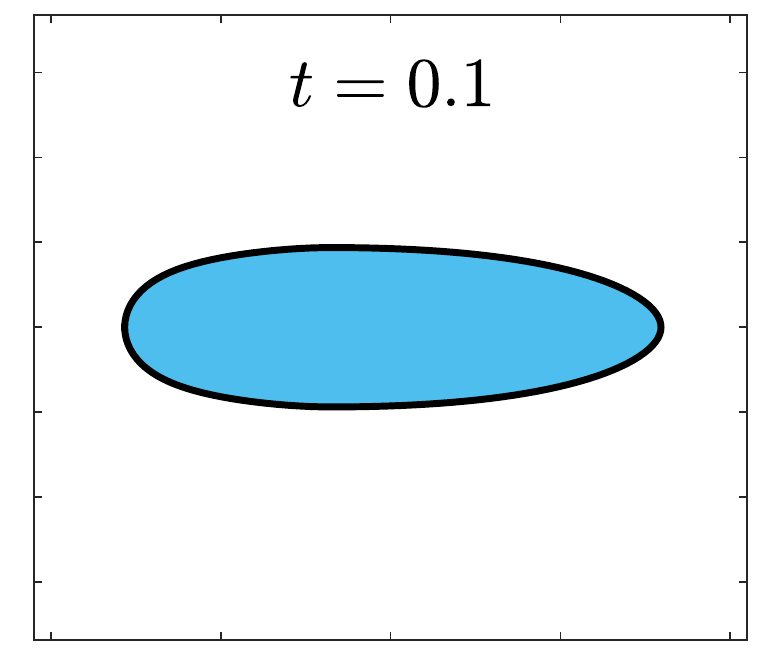}
\includegraphics[width=0.33\textwidth]{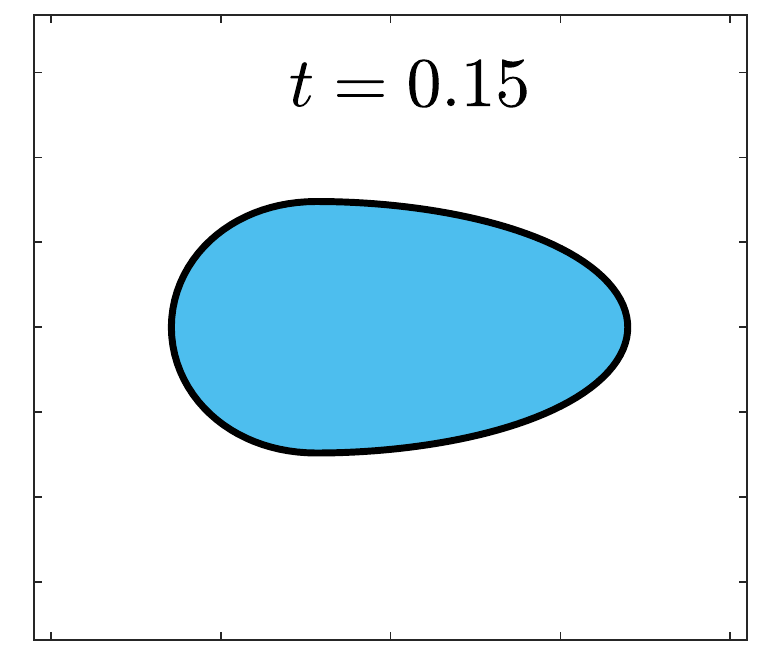}\includegraphics[width=0.33\textwidth]{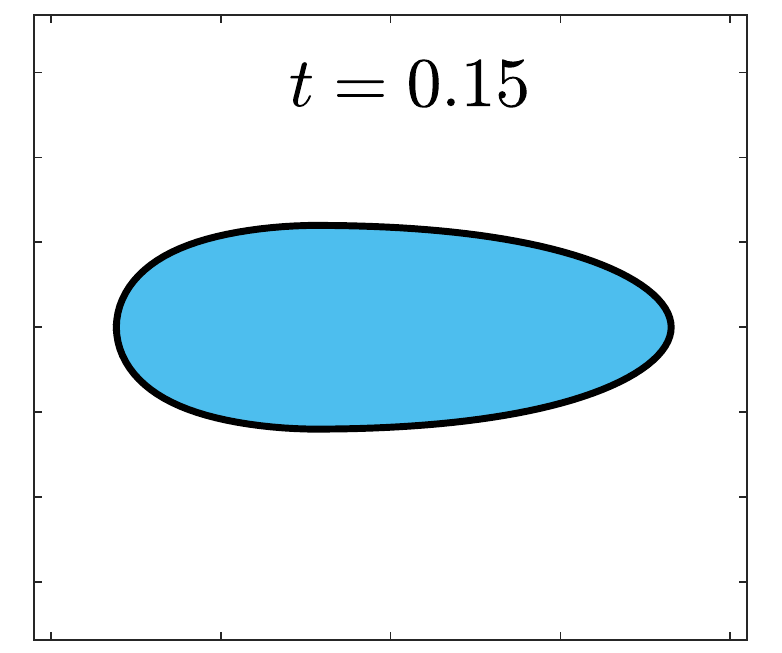}\includegraphics[width=0.33\textwidth]{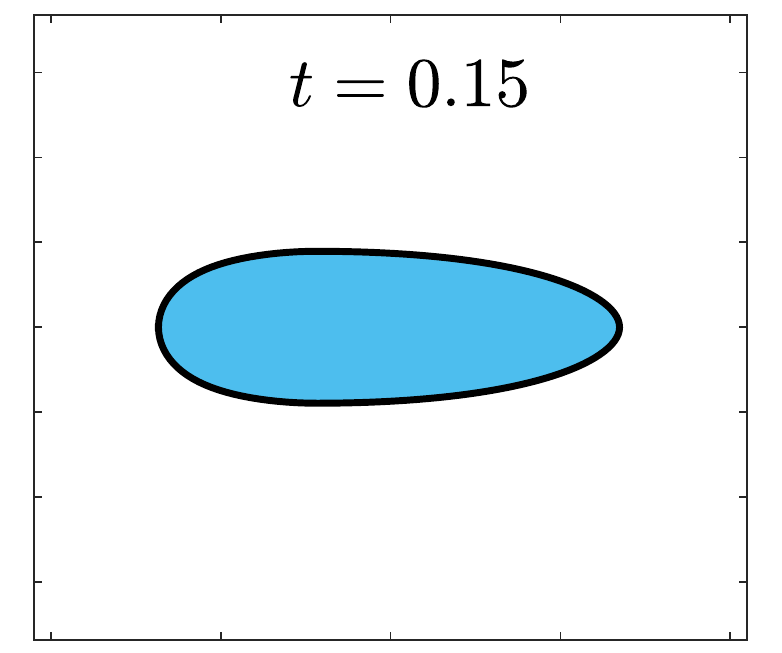}
\includegraphics[width=0.33\textwidth]{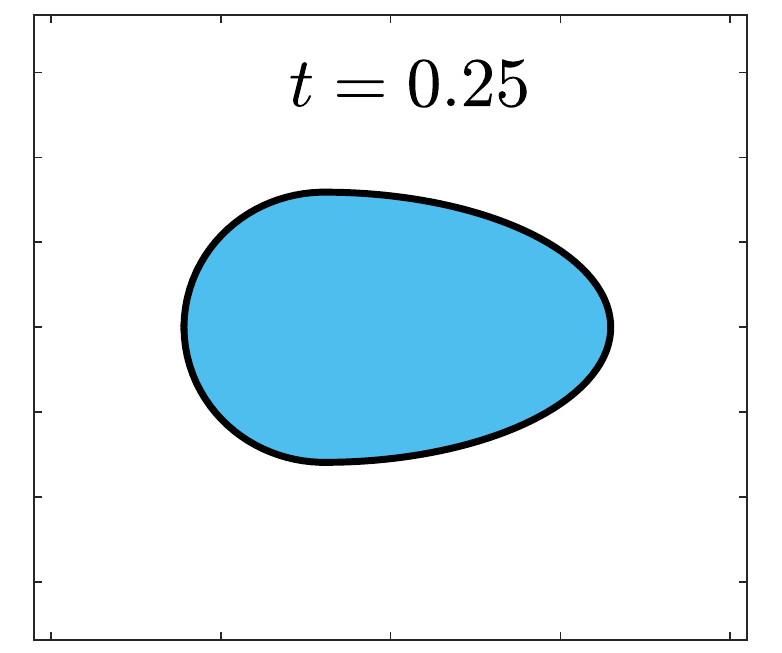}\includegraphics[width=0.33\textwidth]{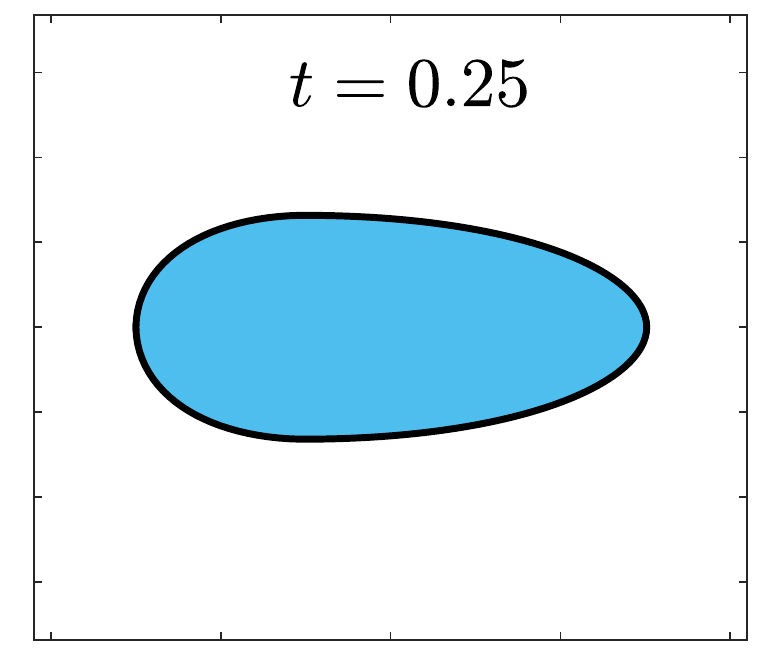}\includegraphics[width=0.33\textwidth]{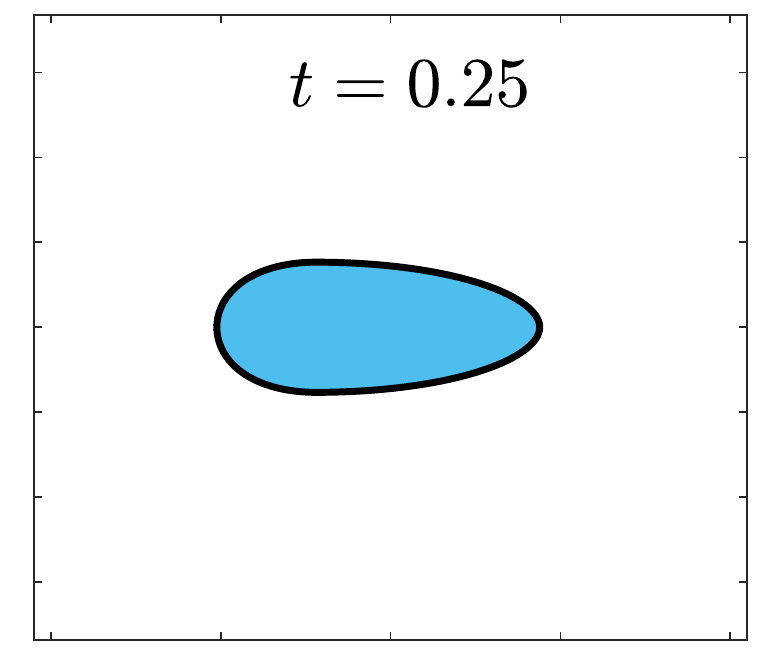}
\includegraphics[width=0.33\textwidth]{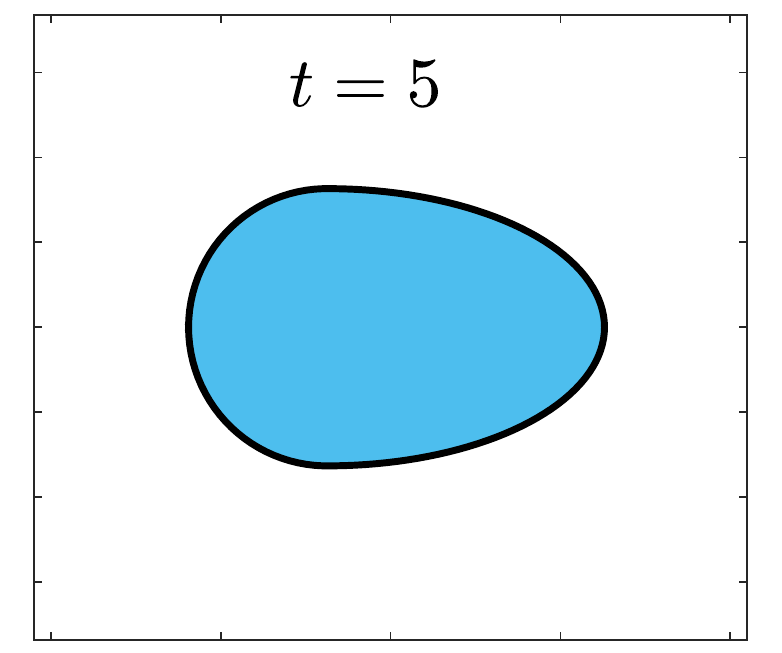}\includegraphics[width=0.33\textwidth]{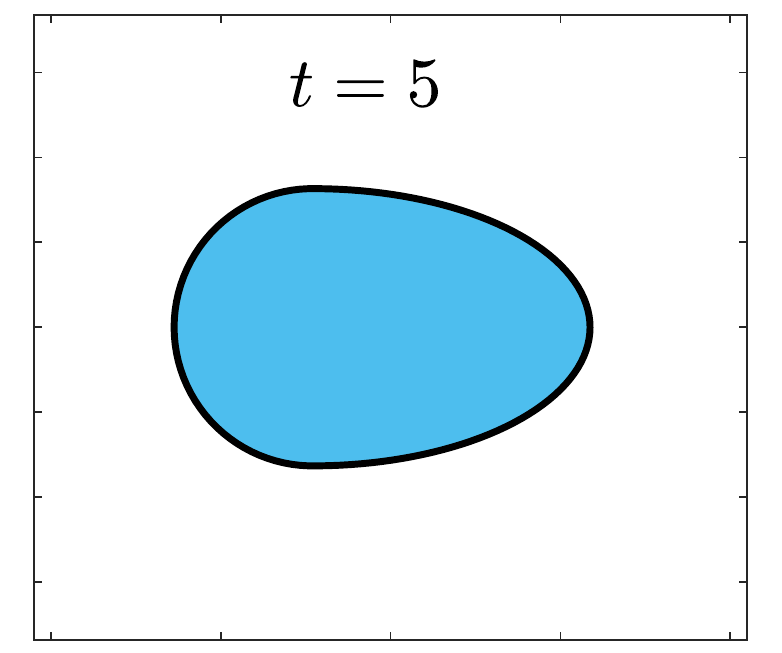}\includegraphics[width=0.33\textwidth]{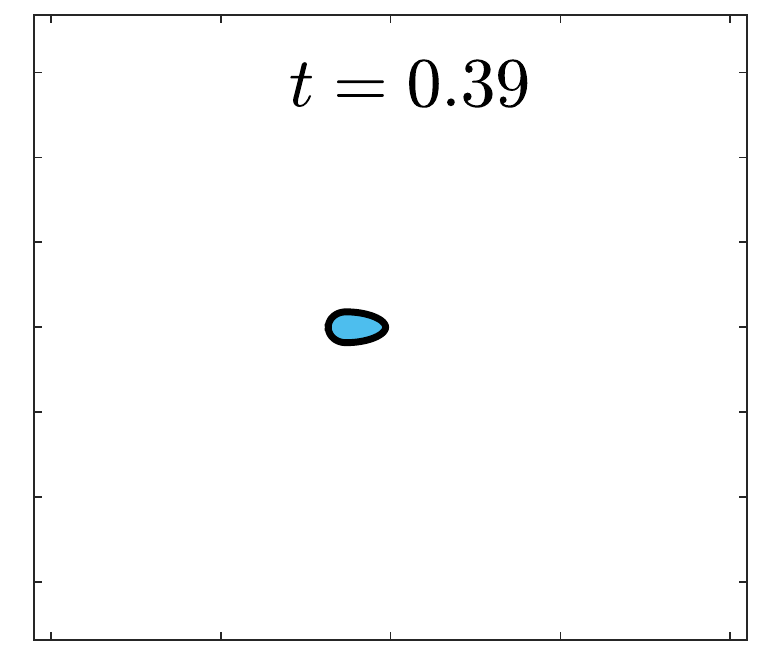}
\caption{Morphological evolutions of a $4\times 1$ ellipse under
anisotropic surface diffusion (left column), area-conserved anisotropic curvature flow (middle column) and anisotropic curvature flow (right column)
with the anisotropic surface energy in Case I at different times.
The evolving curves and their enclosed regions are colored by blue and black. The mesh size and time step are taken as $h=2^{-7}, \tau=0.001$.}
\label{fig: evolve full}
\end{figure}

\begin{figure}[htp!]
\centering
\includegraphics[width=0.33\textwidth]{figures/evolvenew/evolve1/a1.pdf}\includegraphics[width=0.33\textwidth]{figures/evolvenew/evolve1/a1.pdf}\includegraphics[width=0.33\textwidth]{figures/evolvenew/evolve1/a1.pdf}
\includegraphics[width=0.33\textwidth]{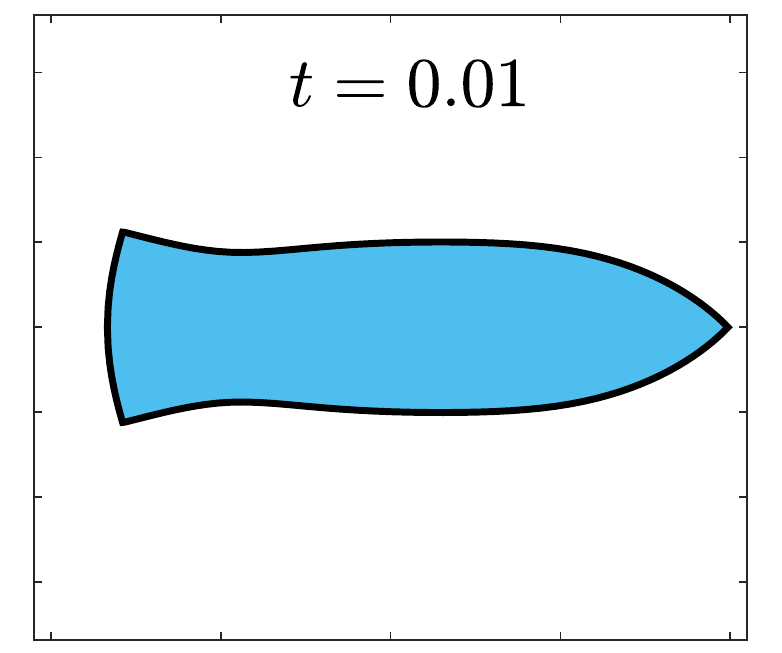}\includegraphics[width=0.33\textwidth]{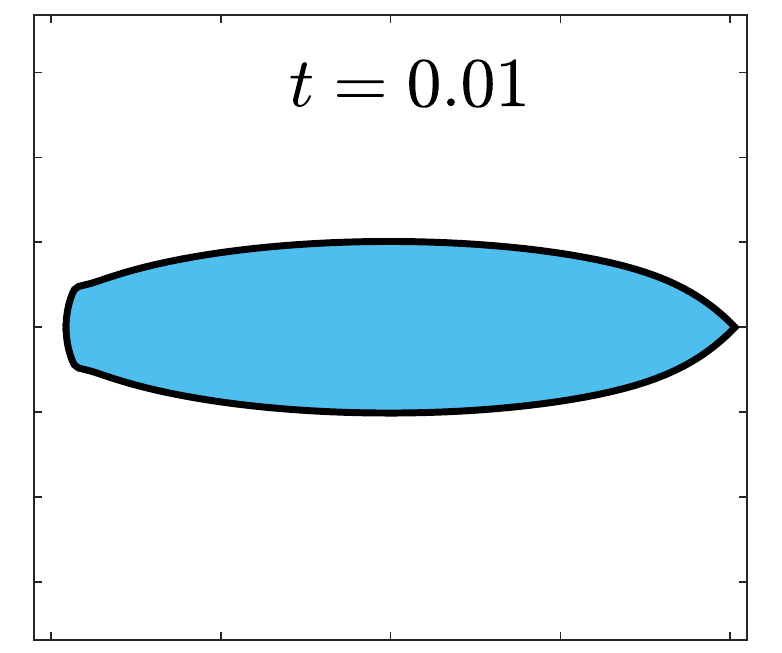}\includegraphics[width=0.33\textwidth]{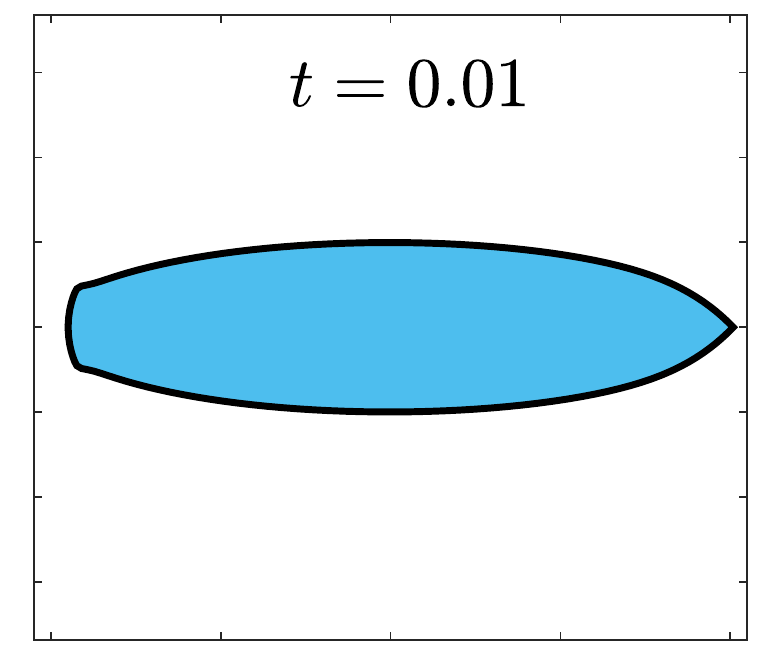}
\includegraphics[width=0.33\textwidth]{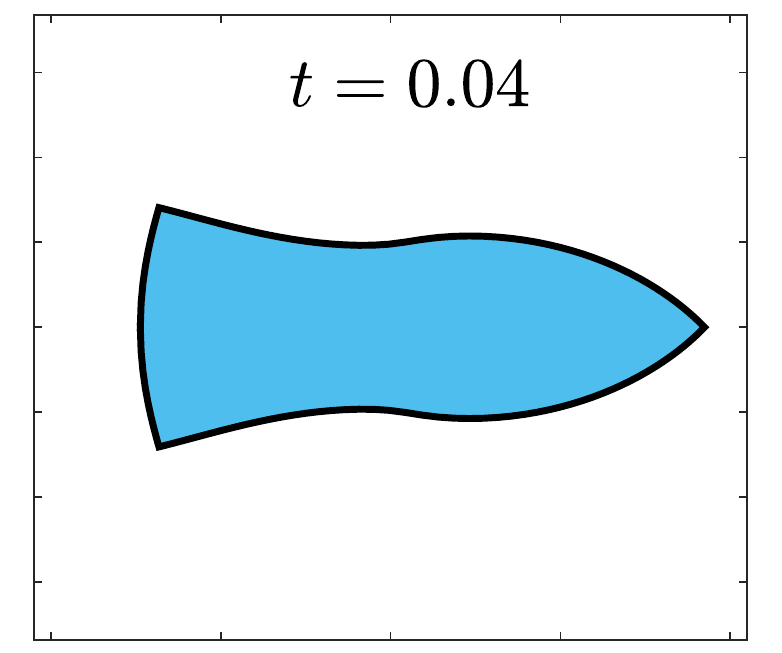}\includegraphics[width=0.33\textwidth]{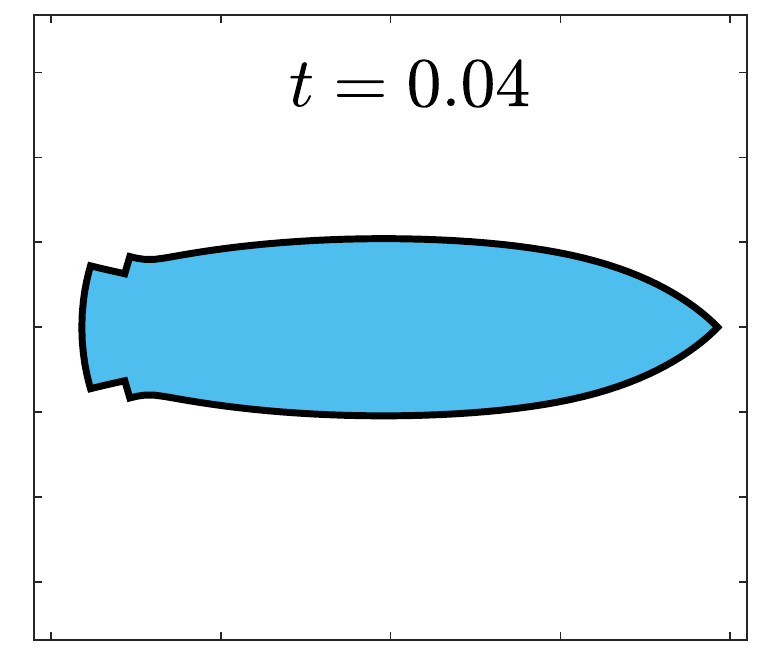}\includegraphics[width=0.33\textwidth]{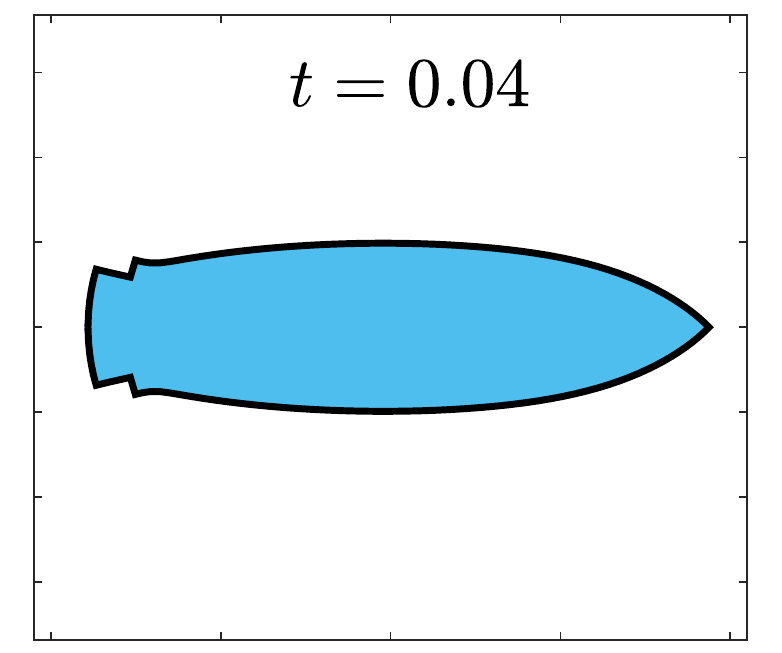}
\includegraphics[width=0.33\textwidth]{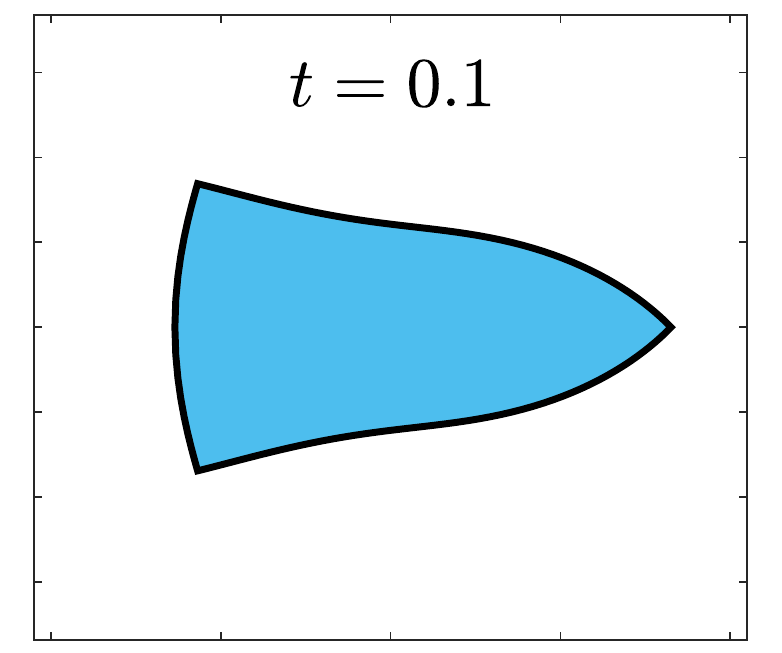}\includegraphics[width=0.33\textwidth]{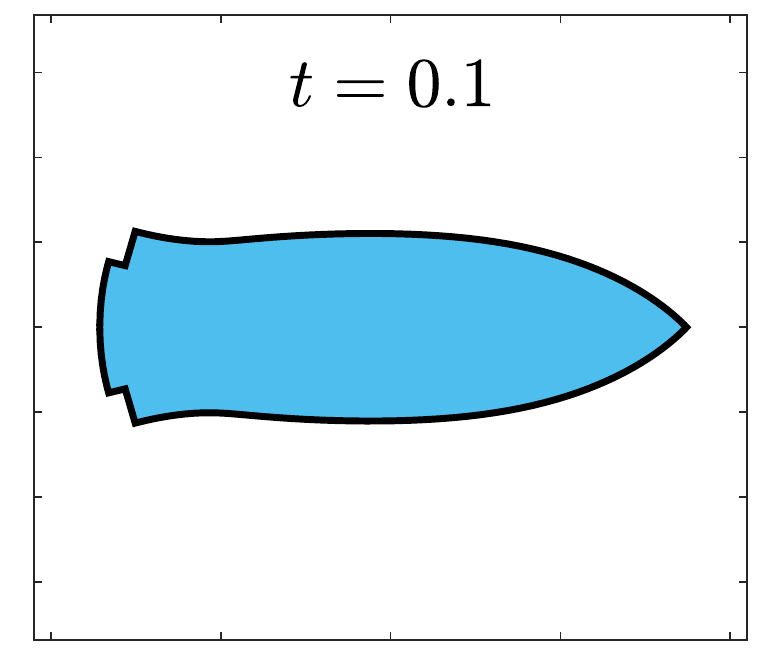}\includegraphics[width=0.33\textwidth]{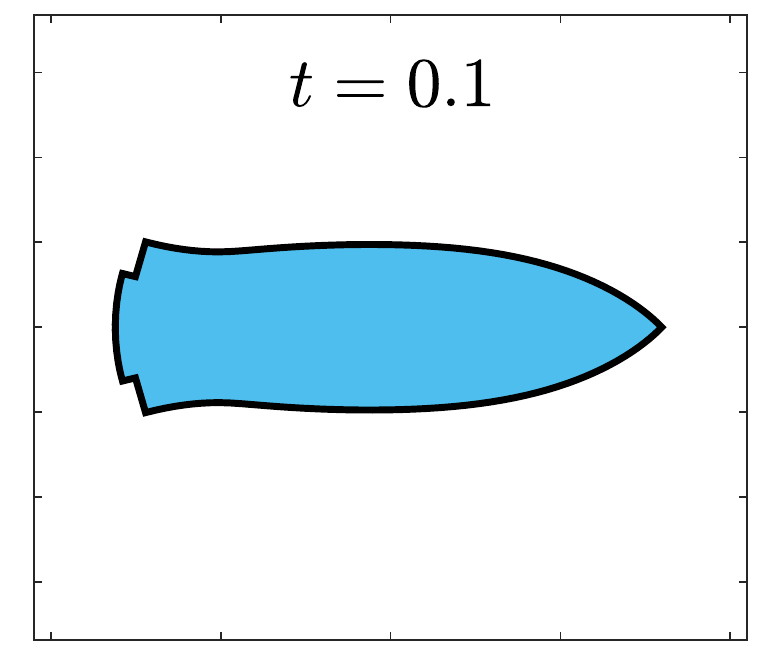}
\includegraphics[width=0.33\textwidth]{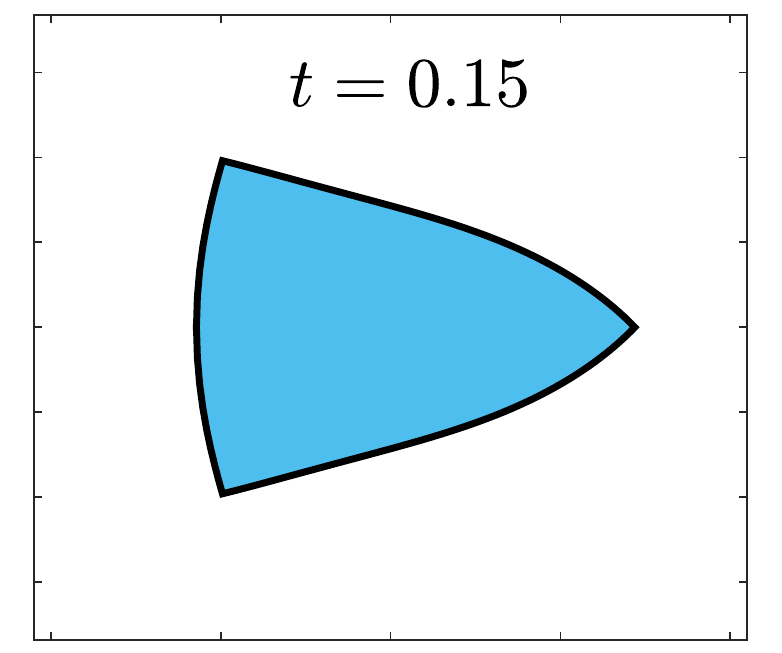}\includegraphics[width=0.33\textwidth]{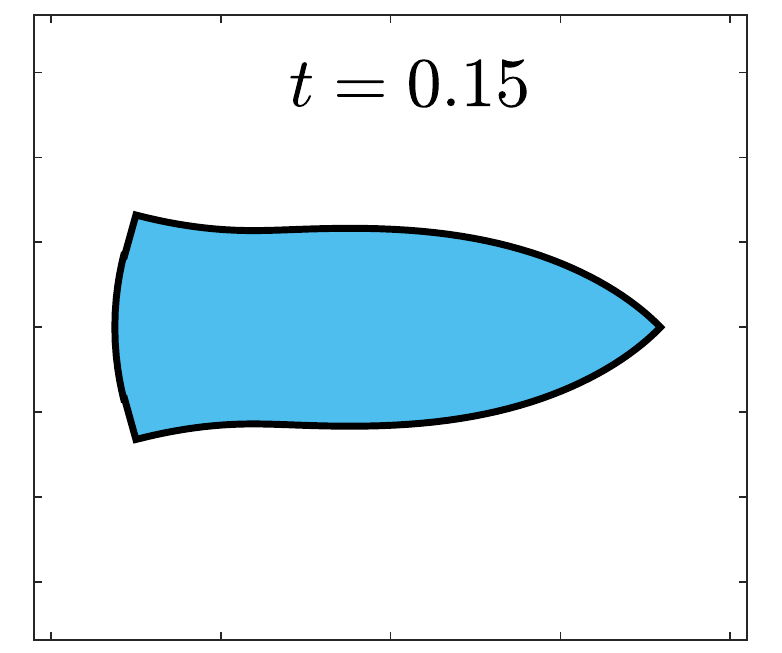}\includegraphics[width=0.33\textwidth]{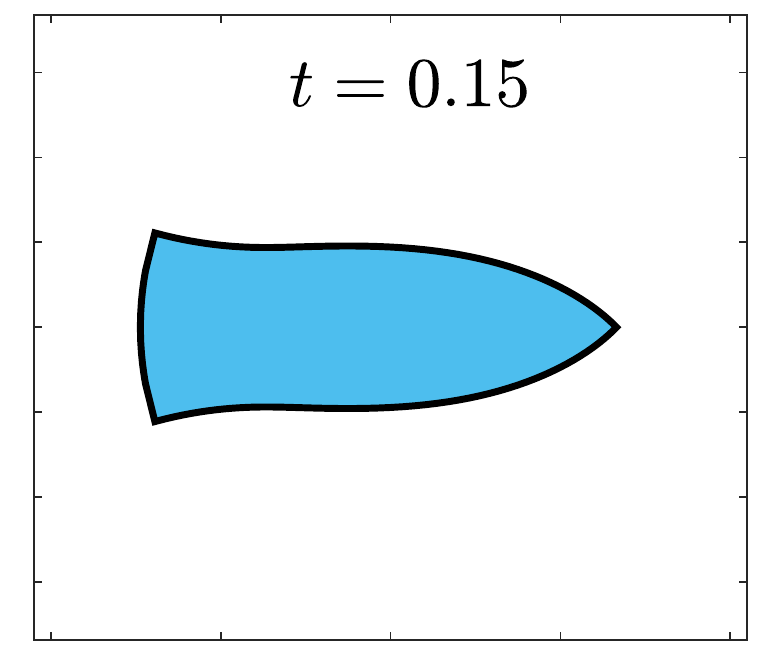}
\includegraphics[width=0.33\textwidth]{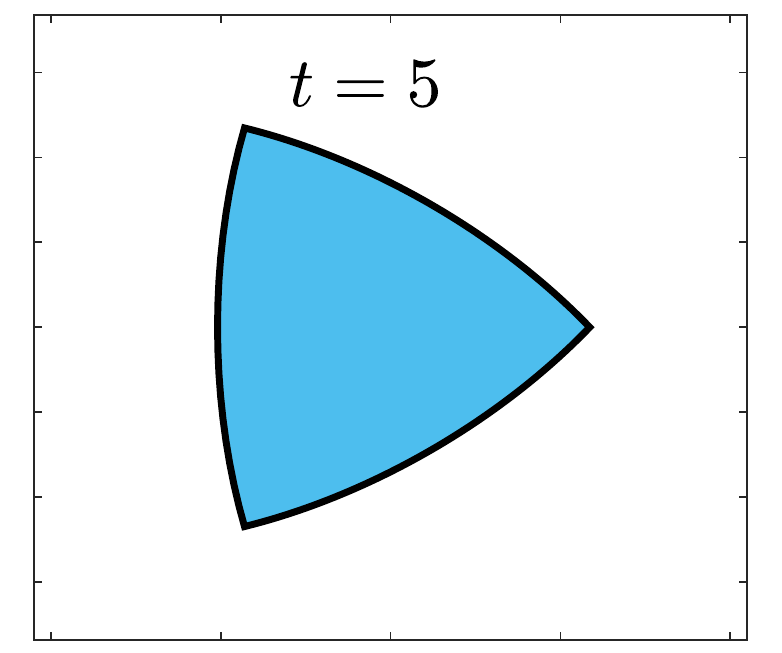}\includegraphics[width=0.33\textwidth]{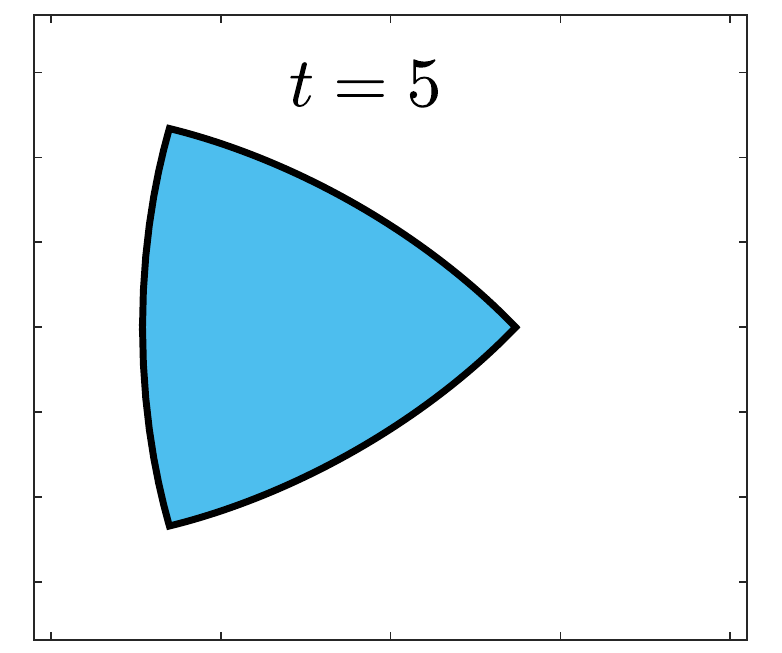}\includegraphics[width=0.33\textwidth]{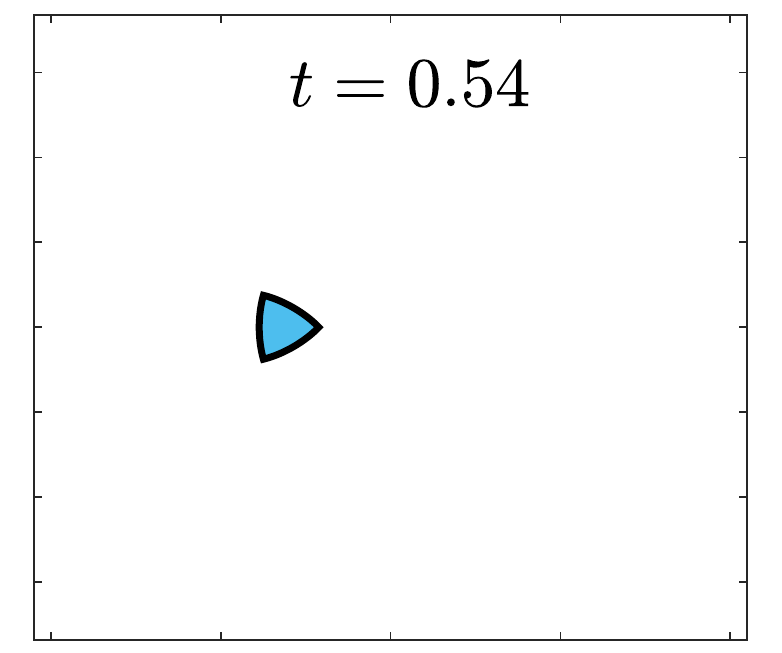}
\caption{Morphological evolutions of a $4\times 1$ ellipse under
anisotropic surface diffusion (left column), area-conserved anisotropic curvature flow (middle column) and anisotropic curvature flow (right column)
with the anisotropic surface energy in Case II with $\beta=1/3$ at different times.
The evolving curves and their enclosed regions are colored by blue and black. The mesh size and time step are taken as $h=2^{-7}, \tau=0.001$.}
\label{fig: evolve full2}
\end{figure}

\section{Conclusions}
By introducing a novel surface energy matrix $\boldsymbol{G}_k(\boldsymbol{n})$ depending
on the anisotropic surface energy $\gamma(\boldsymbol{n})$ and the Cahn-Hoffman $\boldsymbol{\xi}$-vector  as well as a nonnegative stabilizing function $k(\boldsymbol{n})$, we proposed conservative geometric partial differential equations for several geometric flows with anisotropic surface energy $\gamma(\boldsymbol{n})$. We derived their weak formulations and applied PFEM to get their full discretizations. Then we proved these PFEMs are structure-preserving under a very mild condition on $\gamma(\boldsymbol{n})$ with proper choice of the stabilizing function $k(\boldsymbol{n})$. Though our surface energy matrix $\boldsymbol{G}_k(\boldsymbol{n})$ is no longer symmetric, our experiments had shown a robust second-order convergence rate in space and linear convergence rate in time and unconditional energy stability. Specifically,
the mesh quality of the proposed SP-PFEM, i.e. the weighted mesh ratio is
much smaller, is much better than that in the symmetrized SP-PFEM proposed
recently for anisotropic surface diffusion with a symmetric surface energy
\cite{bao2021symmetrized,bao2022symmetrized}. Moreover, our SP-PFEMs work well for the piecewise $C^2$ anisotropy, which is a significant achievement compared with other PFEMs. In the future, we will generalize the surface energy matrix $\boldsymbol{G}_k(\boldsymbol{n})$ to three dimensions (3D) and
propose efficient and accurate SP-PFEM for anisotropic geometric flows in 3D.


\end{document}